\numberwithin{equation}{section}
\newtheorem*{property*}{Property \csname @currentlabel\endcsname}
\newtheorem{theorem}{Theorem}[section]
\newtheorem{lemma}[theorem]{Lemma}
\newtheorem{corollary}[theorem]{Corollary}
\theoremstyle{definition}
\newtheorem{Alg}[theorem]{Algorithm}
\newtheorem{example}[theorem]{Example}
\newtheorem{remark}{Remark}[section]
\newtheorem*{remark*}{Remark}
\newtheorem*{remarks}{Remarks}
\newtheorem*{acks}{Acknowledgements}
\theoremstyle{remark}
\newenvironment{romenumerate}{\begin{enumerate}
 }{\end{enumerate}}
\newenvironment{eqenumerate}{\begin{enumerate}\setcounter{enumi}{\value{equation}}
 }{\setcounter{equation}{\value{enumi}}\end{enumerate}}
\newcounter{oldenumi}
{\setcounter{oldenumi}{\value{enumi}}
\begin{romenumerate} \setcounter{enumi}{\value{oldenumi}}}
{\end{romenumerate}}
\newcounter{thmenumerate}
\newenvironment{thmenumerate}
{\setcounter{thmenumerate}{0}%
 \def\item{\par
 \refstepcounter{thmenumerate}\textup{(\roman{thmenumerate})\enspace}}
}
{}
\newcounter{xenumerate}   
\newcommand\pfitem[1]{\par(#1):}
\newcommand{\refT}[1]{Theorem~\ref{#1}}
\newcommand{\refC}[1]{Corollary~\ref{#1}}
\newcommand{\refL}[1]{Lemma~\ref{#1}}
\newcommand{\refS}[1]{Section~\ref{#1}}
\newcommand{\refSS}[1]{Subsection~\ref{#1}}
\newcommand{\refE}[1]{Example~\ref{#1}}
\newcommand{\refF}[1]{Figure~\ref{#1}}
\newcommand{\refAlg}[1]{Algorithm~\ref{#1}}
\newcommand{\refand}[2]{\ref{#1} and~\ref{#2}}
\xdef\klockan{\the\count1.0\the\count255}
\xdef\klockan{\the\count1.\the\count255}\fi
\newcommand\nopf{\qed}   
\newcommand\set[1]{\ensuremath{\{#1\}}}
\newcommand\bigset[1]{\ensuremath{\bigl\{#1\bigr\}}}
\newcommand\xpar[1]{(#1)}
\newcommand\bigpar[1]{\bigl(#1\bigr)}
\newcommand\Bigpar[1]{\Bigl(#1\Bigr)}
\newcommand\lrpar[1]{\left(#1\right)}
\def\rompar(#1){\textup(#1\textup)}    
\newcommand\Bigparfrac[2]{\Bigpar{\frac{#1}{#2}}}
\def\xexp(#1){e^{#1}}
\newcommand\floor[1]{\lfloor#1\rfloor}
\newcommand\ntoo{\ensuremath{{n\to\infty}}}
\newcommand\nntoo{\ensuremath{{n_1,n_2\to\infty}}}
\newcommand\iid{i.i.d.\spacefactor=1000}    
\newcommand\ie{i.e.\spacefactor=1000}
\newcommand\eg{e.g.\spacefactor=1000}
\newcommand\cf{cf.\spacefactor=1000}
\newcommand{\as}{a.s.\spacefactor=1000}
\newcommand{\aex}{a.e.\spacefactor=1000}
\newcommand\ii{\mathrm{i}}
\newcommand{\tend}{\longrightarrow}
\newcommand\dto{\overset{\mathrm{d}}{\tend}}
\newcommand\pto{\overset{\mathrm{p}}{\tend}}
\newcommand\asto{\overset{\mathrm{a.s.}}{\tend}}
\newcommand\eqd{\overset{\mathrm{d}}{=}}
\newcommand\bbR{\mathbb R}
\newcommand\bbN{\mathbb N}
\newcommand\bbZ{\mathbb Z}
\newcounter{CC} 
\newcounter{cc}
\newcommand\E{\operatorname{\mathbb E{}}}
\renewcommand\P{\operatorname{\mathbb P{}}}
\newcommand\Var{\operatorname{Var}}
\newcommand\Po{\operatorname{Po}}
\newcommand\Bin{\operatorname{Bin}}
\newcommand\Be{\operatorname{Be}}
\newcommand\Ge{\operatorname{Ge}}
\newcommand\ga{\alpha}
\newcommand\gb{\beta}
\newcommand\gd{\delta}
\newcommand\gD{\Delta}
\newcommand\gf{\varphi}
\newcommand\gam{\gamma}
\newcommand\gG{\Gamma}
\newcommand\gl{\lambda}
\newcommand\gss{\sigma^2}
\newcommand\eps{\varepsilon}
\newcommand\cB{\mathcal B}
\newcommand\cC{\mathcal C}
\newcommand\cF{\mathcal F}
\newcommand\cL{{\mathcal L}}
\newcommand\cO{\mathcal O}
\newcommand\cP{\mathcal P}
\newcommand\cS{{\mathcal S}}
\newcommand\cT{{\mathcal T}}
\newcommand\cU{{\mathcal U}}
\newcommand\cW{\mathcal W}
\newcommand\LT{\mathcal {LT}}
\newcommand\ct{\mathcal T}
\newcommand\ctb{\mathcal T''}
\newcommand\bS{{\overline S}}
\newcommand\tS{{\widetilde S}}
\newcommand\tW{{\widetilde W}}
\newcommand\ett[1]{\boldsymbol1[#1]} 
\newcommand\etta{\boldsymbol1} 
\def\[#1]{[\![#1]\!]}
\newcommand\qq{^{1/2}}
\newcommand\qqw{^{-1/2}}
\newcommand\qw{^{-1}}
\renewcommand{\=}{:=}
\newcommand\intoi{\int_0^1}
\newcommand\oi{[0,1]}
\newcommand\oii{[0,1]^2}
\newcommand\dd{\,\textup{d}}
\newcommand\half{\frac12}
\newcommand{\Lovasz}{Lov\'asz}
\newcommand{\Lovaszetal}{Lov\'asz and Szegedy}
\newcommand{\Borgsetal}{Borgs, Chayes, Lov\'asz, S\'os and Vesztergombi}
\newcommand{\tind}{t_{\mathrm{ind}}}
\newcommand{\cuq}{\overline{\cU}}
\newcommand{\cuoo}{\cU_\infty}
\newcommand{\ctq}{\overline{\ct}}
\newcommand{\ctoo}{\cT_\infty}
\newcommand{\ctbq}{\overline{\ctb}}
\newcommand{\ctboo}{\ctb_{\infty,\infty}}
\newcommand{\exch}{exchangeable}
\newcommand{\BUxx}[2]{\cB_{#1#2}}
\newcommand{\cbq}{\overline{\cB}}
\newcommand{\cboo}{\BUxx{\infty}{\infty}}
\newcommand{\cw}{\cW}
\newcommand{\cws}{\cW_{\mathsf s}}
\newcommand{\gwx}[1]{G(#1,W)}
\newcommand{\gwoo}{\gwx\infty}
\newcommand{\gwn}{\gwx{n}}
\newcommand{\gxwx}[2]{G(#1,#2)}
\newcommand{\Gnn}{G_{n_1,n_2}}
\newcommand{\uw}{\gG_W}
\newcommand{\ggw}{\gG_W}
\newcommand{\uww}{\gG''_W}
\newcommand{\ggww}{\gG''_W}
\newcommand{\tg}{threshold graph}
\newcommand{\btg}{bipartite threshold graph}
\newcommand{\fmu}{F_\mu}
\newcommand{\fmui}{F_\mu\qw}
\newcommand{\fmusss}{F_{\mu\sss}}
\newcommand{\smu}{S_\mu}
\newcommand{\wmu}{W_\mu}
\newcommand{\ggmu}{\gG_\mu}
\newcommand{\ggb}{\Gamma''}
\newcommand{\ggmub}{\ggb_\mu}
\newcommand{\dgg}[1]{D_{G;#1}}
\newcommand{\cps}{\cP_{\mathsf s}}
\newcommand{\oix}{$0$--$1$}
\newcommand{\inre}{^\circ}
\newcommand{\sss}{^\dagger}
\newcommand{\sssx}{\dagger} 
\newcommand{\tnxp}[1]{T_{#1,p}}
\newcommand{\tnp}{\tnxp{n}}
\newcommand{\tnnpp}{T_{n_1,n_2;p_1,p_2}}
\newcommand{\tnpx}{\widehat T_{n,p}}
\newcommand{\tnxx}[1]{\widehat T_{n,#1}}
\newcommand{\txtn}{T_{n;X,\taux}}
\newcommand{\txytnn}{T_{n_1,n_2;X,Y,\taux}}
\newcommand{\txxn}[1]{T_{n;X,#1}}
\newcommand{\txxxn}[1]{T_{n;#1}}
\newcommand{\tsxn}[1]{T_{n;#1}}
\newcommand{\tsxnn}[1]{T_{n_1,n_2;#1}}
\newcommand{\tsn}{\tsxn{S}}
\newcommand{\tsnn}{\tsxnn{S}}
\newcommand{\tnnx}[1]{T_{n_1,n_2;#1}}
\newcommand{\tnn}{T_{n_1,n_2}}
\newcommand{\ctnn}{\cT_{n_1,n_2}}
\newcommand{\cbc}{\cC_{\cB}}
\newcommand{\cbo}{\cO_{\cB}}
\newcommand{\cbw}{\cW_{\cB}}
\newcommand{\cbz}{\widetilde\cB}
\newcommand{\ctc}{\cC_{\cT}}
\newcommand{\cto}{\cO_{\cT}}
\newcommand{\ctw}{\cW_{\cT}}
\newcommand{\ctz}{\ct'}
\newcommand{\ixbp}{\iota_{\cB\cP}}
\newcommand{\ixpc}{\iota_{\cP\cC}}
\newcommand{\ixco}{\iota_{\cC\cO}}
\newcommand{\ixcw}{\iota_{\cC\cW}}
\newcommand{\ixow}{\iota_{\cO\cW}}
\newcommand{\ixwb}{\iota_{\cW\cB}}
\newcommand{\ixpb}{\iota_{\cP\cB}}
\newcommand{\ixtp}{\iota_{\cT\cP}}
\newcommand{\ixwt}{\iota_{\cW\cT}}
\newcommand{\gdc}{\gd_{\square}}
\newcommand{\gdcb}{\gd''_{\square}}
\newcommand{\pib}{\pi}
\newcommand{\pix}{\pi^*}
\newcommand{\wgx}{W^*(G)}
\newcommand{\wgnx}{W^*(G_n)}
\newcommand{\yy}{^*}
\newcommand{\elm}{\tau}
\newcommand{\BB}{B^*}
\newcommand{\tl}{T^L}
\newcommand{\tln}{\tl_n}
\newcommand{\taux}{t}
\newcommand{\nun}{_{\elm(n)}}
\newcommand{\nui}{_{\elm(i)}}
\newcommand{\tgn}{\widetilde G_n}
\newcommand{\so}{S^O}
\newcommand{\se}{S^E}
\newcommand{\mupp}{\mu_{p_1,p_2}}
\newcommand{\muppz}{\mu_{p_2,p_1}}
\newcommand{\spp}{S_{p_1,p_2}}
\newcommand{\ggbpp}{\ggb_{p_1,p_2}}
\newcommand{\nul}{\nu_L}
\newcommand{\gamd}{\gam_d}
\newcommand{\gamoo}{\gam_\infty}
\newcommand{\hS}{\hat S}
\newcommand{\hu}{\hat u}
\newcommand{\Onqqw}{O\bigpar{n\qqw}}
\newcommand{\Levy}{L\'evy}
\newcommand{\vanish}[1]{}
\newcommand\REM[1]{{\raggedright\texttt{[#1]}\par\marginal{XXX}}}
\newenvironment{comment}{\setbox0=\vbox\bgroup}{\egroup} 
\newcommand\urladdrx[1]{{\urladdr{\def~{{\tiny$\sim$}}#1}}}
\begin{document}
\title[Threshold graph limits]
{Threshold graph limits and random threshold graphs}

\date{August 12, 2009} 

\author{Persi Diaconis}
\address{Stanford and CNRS, Universit\'e de Nice Sophia Antipolis\\
Department of Mathematics\\
Stanford, CA 94305, USA}
\email{diaconis@math.stanford.edu}
\urladdrx{http://www-stat.stanford.edu/~CGATES/persi}

\author{Susan Holmes}
\address{Stanford \\
Department of Statistics\\
Stanford, CA 94305, USA}
\email{susan@stat.stanford.edu}
\urladdrx{http://www-stat.stanford.edu/~susan/}

\author{Svante Janson}
\address{Department of Mathematics, Uppsala University, PO Box 480,
SE-751~06 Uppsala, Sweden}
\email{svante.janson@math.uu.se}
\urladdrx{http://www.math.uu.se/~svante/}

\subjclass[2000]{} 

\begin{abstract} 
We study the limit theory of large threshold graphs and apply this to a variety of models
for random threshold graphs. The results give a nice set of examples for the emerging theory of
graph limits.
\end{abstract}

\maketitle

\section{Introduction}\label{S:intro}
\subsection*{Threshold Graphs}
Graphs have important applications in modern systems biology and
social sciences.
Edges are created between interacting genes or people who know each other.
However graphs are not objects which are naturally amenable to simple
statistical analyses, there is no natural average graph for instance.
Being able to predict or replace a graph by hidden (statisticians call them latent)
 real variables has
many advantages. This paper
studies such a class of graphs, that sits within the larger class of
interval graphs \cite{McMorris}, itself a subset of intersection
graphs \cite{Erdos}; see also \cite{Brandstadt}. 

Consider the following  properties of a simple graph $G$ on
$[n]\=\{1,2,\ldots,n \}$. 
\begin{eqenumerate}
\item\label{D1.1} 
There are real weights $w_i$ and a threshold value $t$ such that there is
an edge from $i$ to $j$ if and only if
$w_i+w_j>t$.
Thus ``{\em the rich people always know each other}".
\item\label{D1.2} 
$G$ can be built sequentially from the empty graph by adding vertices one at a time,
where each new vertex, is either isolated (non-adjacent to all the previous)
or dominating (connected to all the previous).
\item\label{D1.3} 
The graph is uniquely determined (as a labeled graph) by its degree sequence.
\item\label{D1.4}  
Any induced subgraph has either an isolated or a dominating vertex.
\item\label{D1induced}  
There is no induced subgraph $2K_2$, $P_4$ or
  $C_4$. 
(Equivalently, there is no \emph{alternating 4-cycle},
\ie, four distinct vertices $x,y,z,w$ with edges $xy$
and $zw$ but no edges $yz$ and $xw$; the diagonals
$xz$ and $yw$ may or may not exist.)
\end{eqenumerate}
These properties are equivalent and define the class of \emph{\tg{s}}.
The book by Mahadev and Peled \cite{MP} contains proofs
and several other seemingly different characterizations. 
Note that the complement of a threshold graph is a threshold graph
(by any of \ref{D1.1}--\ref{D1induced}).
By \ref{D1.2}, a threshold graph is either connected (if the
last vertex is dominating) or has an isolated vertex (if the last
vertex is isolated); clearly these two possibilities exclude each
other when $n>1$.

\begin{example}\label{Ex1}
  \begin{figure}[hbpt]
\includegraphics[width=3in,trim = 10mm 20mm 10mm  20mm,clip]{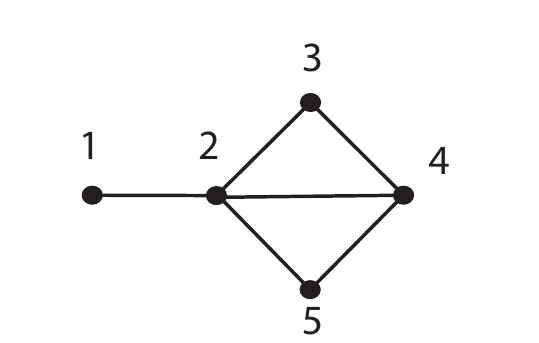}
\caption {A threshold graph}\label{FigEx1}
  \end{figure}
The graph in \refF{FigEx1}
is a threshold graph, from \ref{D1.1} by taking weights
1,5,2,3,2 on vertices 1--5 with $t=4.5$, or from \ref{D1.2} by adding
vertices 3, 5 (isolated), 4 (dominating), 1 (isolated) and 2 (dominating).  
\end{example}

While many familiar graphs are threshold graphs (stars or complete graphs for
example), many are not 
(e.g.\ paths or cycles of length 4 or more). 
For  example, of the $64$ labeled graphs on 4 vertices, 46 are
threshold graphs;
the other 18 are paths $P_4$, cycles $C_4$, and
pairs of edges $2K_2$ (which is the complement of $C_4)$.
Considering unlabeled graphs, there are 11 graphs
on 4 vertices, and 8 of them are threshold graphs.

\subsection*{Random Threshold Graphs}
It is natural to study random threshold graphs. 
There are several different natural random constructions; we will
in particular consider the following three:
\begin{eqenumerate}
\item\label{DRG1}  
From \ref{D1.1} by choosing $\{w_i\}_{1\leq i\leq n}$ as 
independent and identically distributed (\iid{}) random
variables from some probability distribution. (We also choose some
fixed $t$; we may assume $t=0$ by replacing $w_i$ by $w_i-t/2$.)
\item\label{DRG2}  
From \ref{D1.2} by 
ordering the vertices randomly and adding the
vertices one by one, each time choosing at random between the
qualifiers `dominating' or `isolated' with probabilities $p_i$ and
$1-p_i$, respectively, ${1\leq i\leq n}$. This is a simple random
attachment model in a similar  vein as those in \cite{Mitzenmacher}.  
We mainly consider the case when all $p_i$ are equal to a single
parameter $p\in\oi$.
\item\label{DRG3}
The uniform distribution on the set of threshold graphs.   
\end{eqenumerate}

\begin{figure}[htbp] 
\hbox{   \hskip-1.5cm
\includegraphics[width=3.5in]{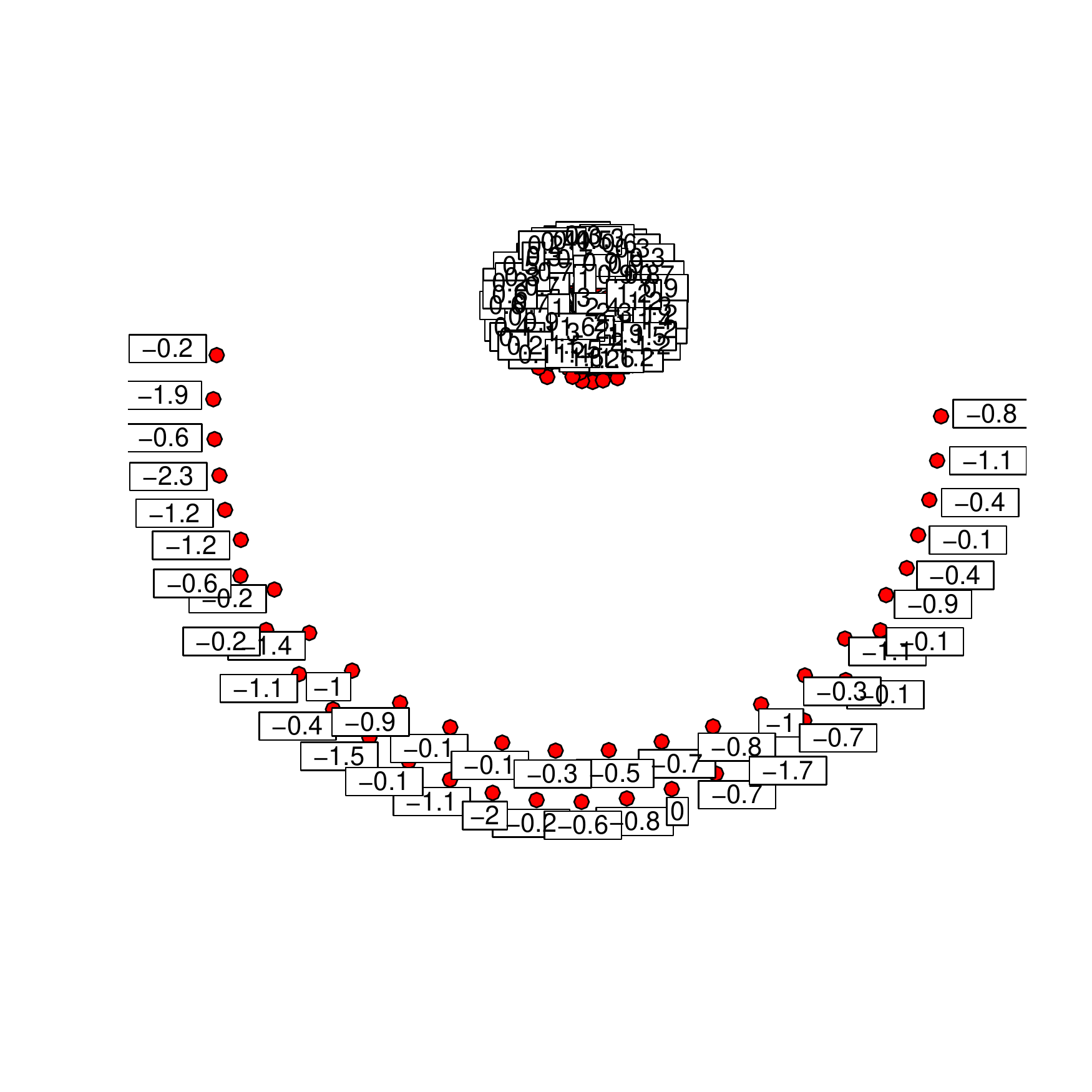}
\hskip-2cm
    \includegraphics[width=4.5in]{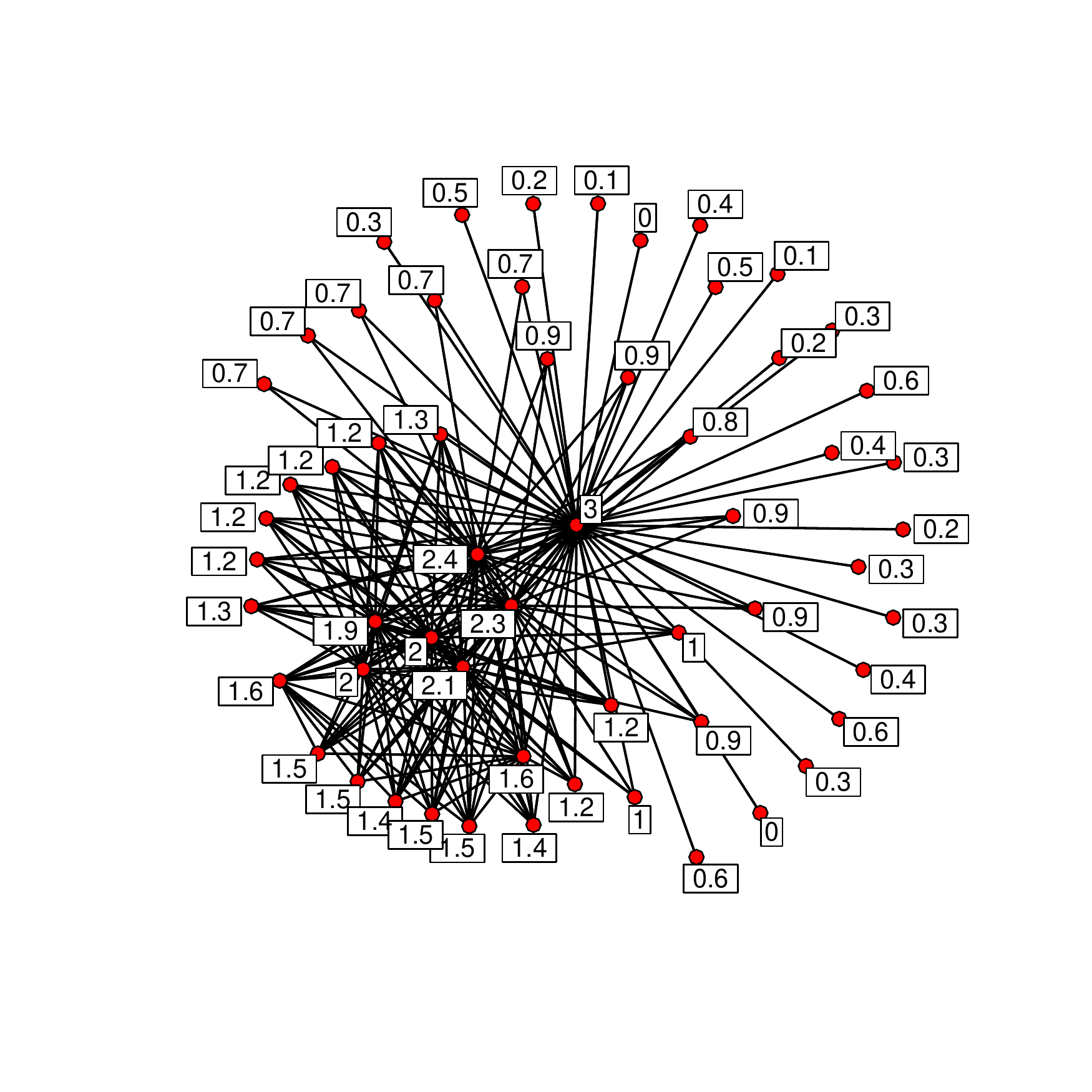}}
   \caption{A whole threshold graph with isolates (left) and with only
   the connected part expanded (right); the labels are the rounded
   weights $w_i$.} 
   \label{fig:exgr2}
\end{figure}

\begin{example}\label{Ex2}
Figure \ref{fig:exgr2} shows a random threshold graph 
constructed by \ref{DRG1}
with $w_i$
chosen independently 
from the standardized Normal distribution 
 and $t=3$. About half of the vertices are isolated, most of those with negative weights. 
\end{example}

 \begin{figure}[htbp] 
\hbox{\includegraphics[width=4.5in]{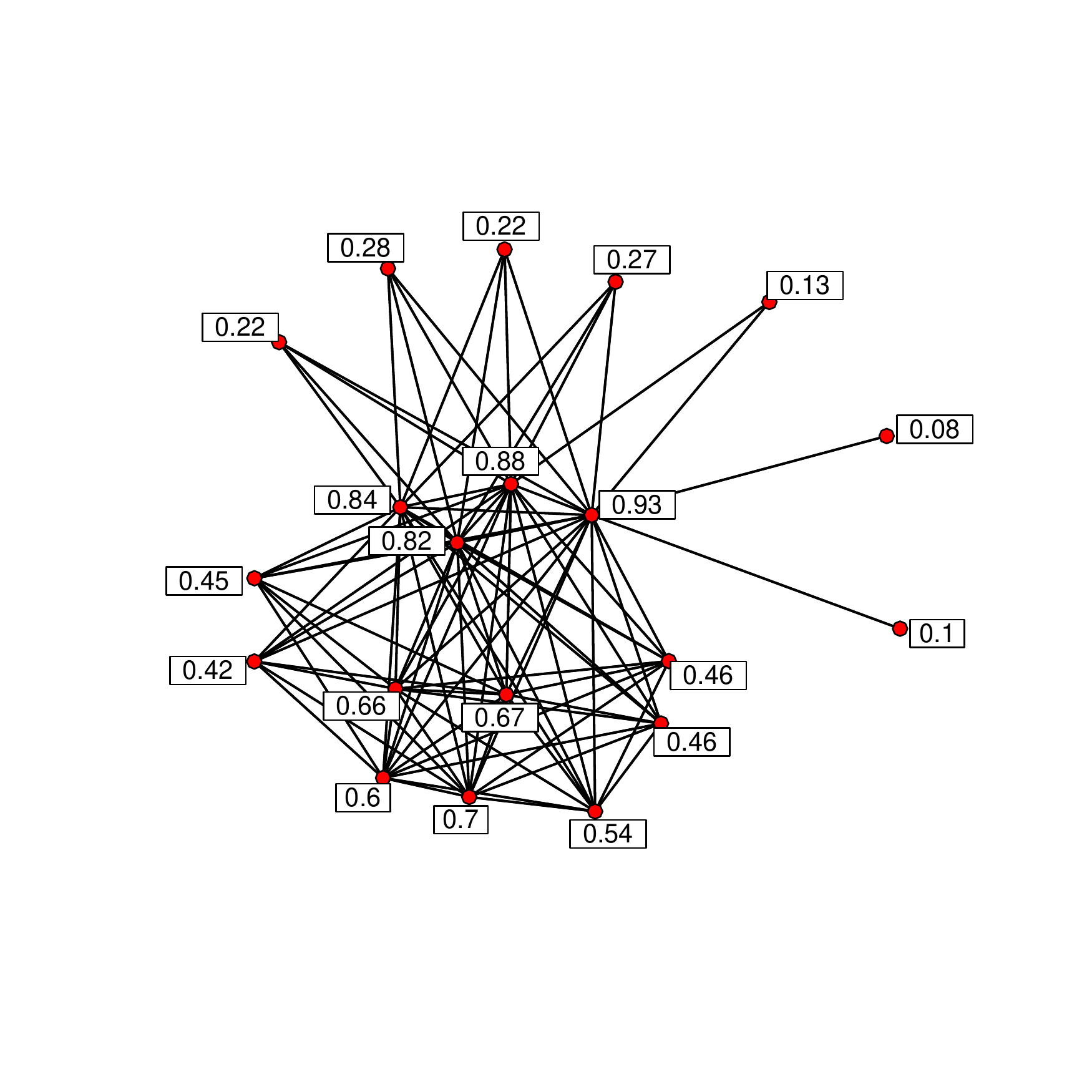}}
   \caption{A  threshold graph with $n=20$ and uniform $w_i$. It turns
   out that this instance had no
   isolates. The labels are the rounded weights $w_i$. }
   \label{fig:exgr3}
\end{figure}

 \begin{example}
   \label{Euniform}
 Figure \ref{fig:exgr3} shows a random threshold graph constructed by
 \ref{DRG1} with $w_i$ chosen as \iid{}
 uniform random variables on $[ 0,1 ]$ and $t=1$.
This instance is connected; this happens if and only if
the maximum and minimum of the $w_i$'s add to more than 1 (then there
 is a dominating vertex); in this example this has
probability $1/2$.

We show below (Corollaries \ref{C=} and \ref{C=2}) that this 
uniform weight model is
equivalent to adding isolated or dominating nodes as in
\ref{DRG2}
with probability
$p=1/2$, independently and in random order. 
It follows that
this same distribution
appears as the stationary distribution of a Markov chain on threshold graphs
which picks a vertex at random and changes it to dominating or isolated
with probability $1/2$ (this walk is analysed in \cite{BrownD}).
Furthermore, 
 it follows from \refSS{SSunlabeled} that these models yield a uniform
 distribution on the set of unlabeled \tg{s} of order $n$.
 \end{example}
 
 \subsection*{Bipartite Threshold Graphs}
 
We also study the parallel case of \btg{s} (difference graphs), both
for its own sake and because one of the main theorems is proved by
first considering the bipartite case.

By a \emph{bipartite graph}, we mean a graph with an explicit
bipartition of the vertex set; it can thus be written as $(V_1,V_2,E)$
where the edge set $E\subseteq V_1\times V_2$.
These following properties of a bipartite graph
are equivalent and define the class of \emph{\btg{s}}. (See \cite{MP}
for further characterizations.)
\begin{eqenumerate}
\item\label{B1.1} 
There are real weights $w_i'$, $i\in V_1$ and $w_j''$,
$j\in V_2$, and a threshold value $t$ such that there is
an edge from $i$ to $j$ if and only if
$w_i'+w_j''>t$.
\item\label{B1.2} 
$G$ can be built sequentially 
starting from $n_1$ white vertices
and $n_2$ black 
vertices in some fixed total order.
Proceeding in 
this order, make each white vertex dominate or isolated from 
all the black vertices that precede it and each black vertex
dominate or isolated from all earlier white vertices.
\item\label{B1.4}  
Any induced subgraph has either an isolated vertex or a vertex
dominating every vertex in the other part.
\item\label{B1induced}  
There is no induced subgraph $2K_2$.
\end{eqenumerate}

\begin{remarks}
1.
Threshold graphs were defined by 
Chv\'atal and Hammer \cite{CH77}.
Bipartite \tg{s} were studied by Hammer, Peled and Sun
\cite{HPS90} 
under the name
\emph{difference graphs} 
because they
can equivalently be characterized as the graphs $(V,E)$ for which there
exist weights $w_v$, $v\in V$, and a real number $t$
such that $|w_v|<t$ for every $v$ and $uv\in E \iff |w_u-w_v|>t$;
it is easily seen that every such graph is bipartite with
$V_1=\set{v:w_v\ge0}$ and $V_2=\set{v:w_v<0}$ and that is satisfies
the definition above (\eg, with $w'_v=w_v$ and $w''_v=-w_v$), and conversely.
We will use the name \btg{} to emphasize that we consider these graphs
equipped with a given bipartition. The same graphs were called
\emph{chain graphs} by 
Yannakakis \cite{Yan82} because each partition can be linearly ordered
for the inclusion of the neighborhoods of its elements.

2.
A suite of programs for working with threshold graphs appears
in
\cite{Hagberg06} 
with further developments 
in \cite{KMRS,MMK}.

3.
The most natural class of graphs built from a coordinate system are
commonly called
geometric  graphs \cite{Penrose} or geographical 
graphs \cite{KMRS,MMK}. Threshold graphs are
a special case of these. Their recognition and
manipulation in a statistical context
relies on useful measures on such graphs. We will start by defining
such measures and developing a limit theory.
\end{remarks}
\subsection*{Overview of the Paper}

The purpose of this paper is to study the limiting properties of large
threshold graphs in the spirit 
of the theory of graph limits developed by
Lov\'asz and Szegedy \cite{LSz}
and Borgs, Chayes, Lov\'asz,  S\'os, Vesztergombi \cite{BCL1} (and in
further papers by these authors and others).
As explained below, the limiting objects are not graphs,
but can rather be represented by symmetric functions $W(x,y)$ from
$[ 0,1 ]^2$ to $[ 0,1 ]$;
any sequence of graphs that converges in the appropriate way has such a limit. 
Conversely, 
such a function $W$ may be used to form a random 
graph $G_n$ by choosing independent random points $U_i$ in $[ 0,1 ]$,
and then for each pair $(i,j)$ with $1\leq i< j \leq n$ flipping
a biased coin with heads probability $W(U_i,U_j)$, putting an
edge from $i$ to $j$ if the coin comes up heads.
The resulting sequence of random graphs is (almost surely) an example
of a sequence of graphs converging to $W$.
For \refE{Euniform}, letting $\ntoo$, there is 
(as we show in greater generality in \refS{Srandom})
a limit $W$ that may be pictured as in \refF{fig:sps2}.

\begin{figure}[htbp] 
\includegraphics[width=2in]{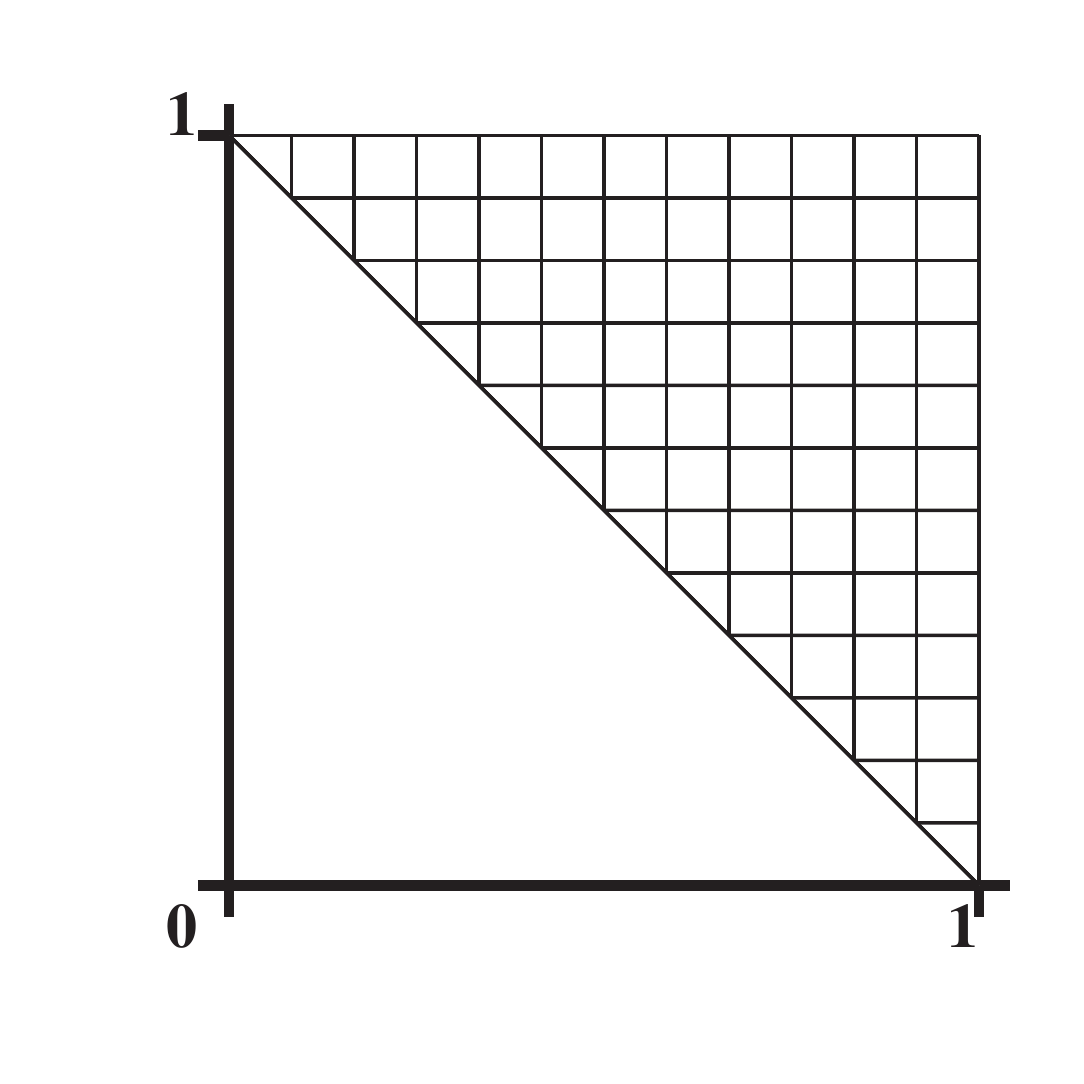}  
\caption{The function $W(x,y)$ for \refE{Euniform}. Hashed values have
$W(x,y)=1$, unhashed $W(x,y)=0$.
}
\label{fig:sps2}
\end{figure}

One of our main results (Theorems \ref{Too}) shows that graph limits of
threshold graphs have unique representations by increasing symmetric 
zero-one valued functions $W$. Furthermore, there is a
one-to-one correspondence 
between these limiting objects
and a certain type of
 `symmetric' probability distributions $P_W$ on $[ 0,1 ]$.
A threshold graphs is  characterized by its degree sequence;
normalizing this to be a probability distribution, say $\nu(G_n)$, we
show (\refT{Tlim}) that a sequence of  threshold graphs  
converges to $W$ when $n\to \infty$ if and only if 
$\nu(G_n)$ converges to $P_W$.
(Hence, $P_W$ can be regarded as the degree distribution of the limit.
The result that a limit of \tg{s} is determined by its degree distribution
is a natural analogue for the limit objects
of the fact that an unlabeled \tg{} is uniquely
determined by its degree distribution.)

 \begin{figure}[htbp] 
\hbox{       \includegraphics[width=4.5in]{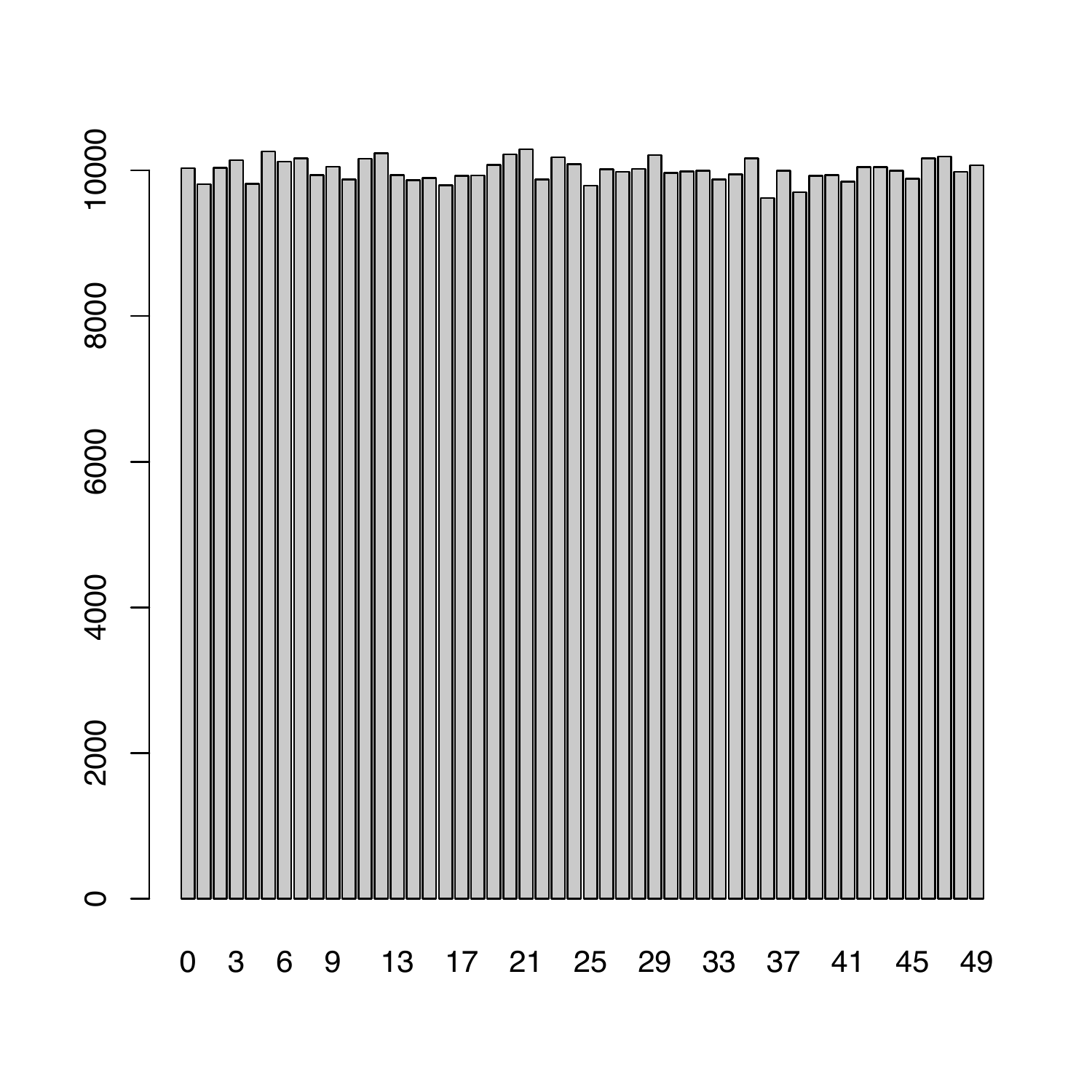}}
   \caption{Threshold graphs were generated with $n=50$ as in
	 \refE{Euniform}
with uniform 
$w_i$ and $t=1$; this is the degree histogram for a sample of
	 10,000 random graphs.}
   \label{fig:exgr4}
\end{figure}

 \refF{fig:exgr4} and \refF{fig:exgr5} 
show simulations 
of these results. In \refF{fig:exgr4},
10,000 graphs with $n=50$
were generated from
\ref{DRG1} with uniform weights as in \refE{Euniform}.

In the bipartite case,
there is a similar 1--1 
correspondence between the limit objects and probability
distributions on \oi; 
now all probability distributions on \oi{} appear in the representation
of the limits (\refT{Tboo}).

\refS{Suniform} discusses uniform random threshold graphs (both
labeled and unlabeled) and methods to generate them.
\refS{Sprel} gives a succint review of notation and graph limits. 
\refS{Sdeg} develops the limit theory of degree sequences; this is
not restricted to threshold graphs. \refS{Slim} develops the limit
theory  
for threshold graphs both deterministic and
random. \refS{Srandom} treats examples of random \tg{s} and
their limits, and \refS{SrandomB} gives corresponding examples and
results for random \btg{s}.
\refS{Sspectrum} treats the spectrum of the Laplacian of \tg{s}.

We denote the vertex and edge sets of a graph $G$ by $V(G)$ and
$E(G)$, and the numbers of vertices and edges by $v(G)\=|V(G)|$ 
and $e(G)\=|E(G)|$. For a bipartite graph we similarly use $V_j(G)$
and $v_j(G)$, $j=1,2$.

Throughout the paper, `increasing' and `decreasing' should be
interpreted in the weak sense (non-decreasing and non-increasing).
Unspecified limits are as \ntoo.


\section{Generating threshold graphs uniformly}\label{Suniform}

This section gives algorithms for generating uniformly distributed threshold graphs.
Both in the labeled case and in the unlabeled case. The algorithms are
used here for simulation and in Sections \refand{Srandom}{Sdegrees} 
to prove limit theorems. 

Let $\cT_n$ and $\LT_n$ be the sets of unlabeled and labeled
threshold graphs on $n$ vertices. These are different objects,
$\cT_n$ is a quotient of $\LT_n$, and we treat counting
and uniform generation separately for the two cases.
We assume in this section that $n\ge2$.

\subsection{Unlabeled threshold graphs}\label{SSunlabeled}
We can code an unlabeled threshold graph on $n$ vertices 
by a binary code $\ga_2\dotsm\ga_{n}$ of length $n-1$:
Given a code $\ga_2\dotsm\ga_n$, we construct
$G$ by \ref{D1.2} adding vertex $i$ as a
dominating vertex if and only if $\ga_i=1$
($i\ge2$).
Conversely, given $G$ of order $n\ge2$, let
$\ga_n=1$ if there is a dominating vertex ($G$
is connected)
and $\ga_n=0$ if there is an isolated  vertex ($G$ is disconnected);
we then remove one such dominating or isolated vertex and continue
recursively to define $\ga_{n-1},\dots,\ga_2$.

Since all dominating (isolated) vertices are equivalent to each other,
this coding gives a bijection between $\cT_n$ and
$\set{0,1}^{n-1}$. In particular,
\begin{equation*}
  |\cT_n|=2^{n-1},
\qquad n\ge1.
\end{equation*}
See \refF{FT3} for an example.

\begin{figure}
\includegraphics[width=3in]{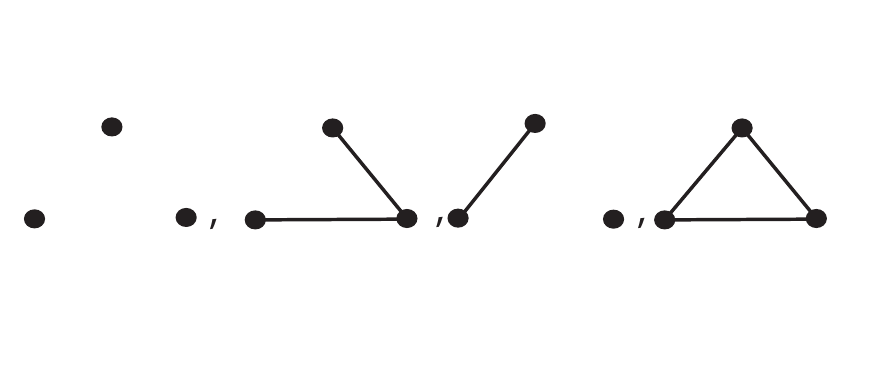}\\
\vskip-5mm
{\hskip 4mm 00 \hskip16mm 01\hskip16mm 10\hskip16mm 11}
\caption{The four graphs in $\cT_3$ and their codes.}
\label{FT3}  
\end{figure}

This leads to a simple algorithm to generate a uniformly distributed
random unlabeled \tg: we construct a random code by making $n-1$
coin flips. In other words:
\begin{Alg} \label{Alg1}
\emph{Algorithm for generating uniform random unlabeled threshold graphs
  of a given order $n$.}
\begin{description} 
\item[ Step 1] Add $n$ vertices by \ref{D1.2}, each
  time randomly choosing `isolated' or `dominating' with probability
  $1/2$.
\end{description}
\end{Alg}
This is thus the same as the second method in
\refE{Euniform}, so \refC{C=} shows that the first
method in \refE{Euniform} also yields uniform random
unlabeled threshold graphs (if we forget the labels).

The following notation is used to define two further algorithms
(\refSS{SSblock}) and for proof of the limiting results in \refS{Sdegrees}.

Define the \emph{extended binary code} of a \tg{} 
to be the binary code with the first binary digit repeated; it is thus
$\ga_1\ga_2\ga_3\dotsm\ga_n$ with
$\ga_1\=\ga_2$. The runs of 0's and 1's in the extended
binary code then correspond to blocks of vertices that can be added
together in \ref{D1.2} as either isolated or dominating vertices,
with the blocks alternating between isolated and dominating.
The vertices in each block are
equivalent and have, in particular, the same vertex degrees, while
vertices in different blocks can be seen to have different degrees. 
(The degree increases strictly from one dominating block to the next
and decreases strictly from one isolated block to the next, with every
dominating block having higher degree than every isolated block; \cf{}
\refE{Egen} below.)
The number of different vertex degrees thus equals the number of
blocks.

If the lengths of the blocks are $b_1,b_2,\dots,b_\elm$, then the number
of automorphisms of $G$ is thus $\prod_{j=1}^\elm b_j!$, since
the vertices in each block may be permuted arbitrarily.

Note that if
$b_1,\dots,b_\elm$ are the lengths of the blocks 
then
\begin{equation}\label{block}
  b_1\ge2,\qquad   b_k\ge1
  \text{ ($k\ge2$)},
\qquad \sum_{k=1}^\elm b_k=n.
\end{equation}
Since the blocks are alternatingly dominating or isolated, 
and the first block may be either, each
sequence $b_1,\dots,b_\elm$ satisfying \eqref{block}
corresponds to exactly 2 unlabeled \tg{s} of order
$n$. (These graphs are the complements of each other. One has
isolated blocks where the other has dominating blocks.)

\subsection{Labeled threshold graphs}\label{SSlabeled}

The situation is different for labeled threshold graphs.
For example, all of the $2^{\binom{3}{2}}=8$ labeled graphs with
$n=3$ turn out to be threshold graphs and  
for instance
$$ 
\includegraphics[width=3in]{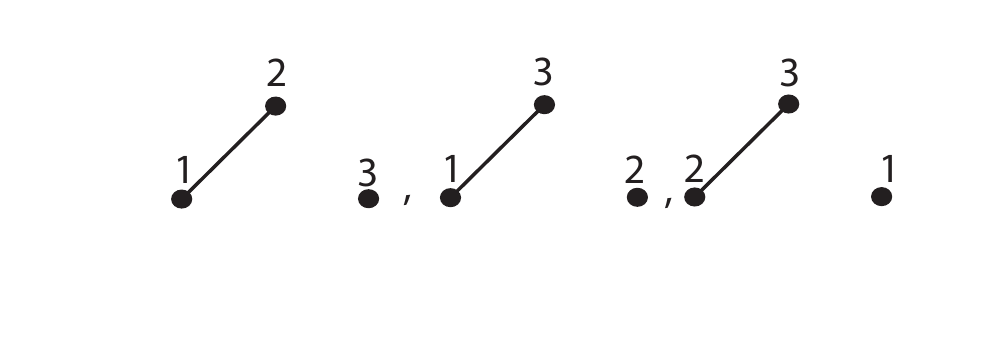}
$$
are distinguished. Hence the distribution of a uniform random labeled
threshold graph differs from the distribution of a uniform unlabeled
threshold graph (even if we forget the labels). In particular,
\refE{Euniform} does \emph{not} produce uniform
random labeled \tg{s}.

Let $G$ be an  unlabeled \tg{} with an extended code
having block lengths (runs) $b_1,\dots,b_\elm$.
Then the number of labeled \tg{s} corresponding to $G$ is
$n!/\prod_1^\elm b_j!$, since every such graph corresponds to a
unique assignment of the labels $1,\dots,n$ to the $\elm$
blocks, with $b_i$ labels to block $i$. (Alternatively and
equivalently, this follows from the number $\prod_1^\elm b_j!$
of automorphisms given above.)

The number $t(n)\=|\LT_n|$ of labeled
threshold graphs 
\cite[A005840]{OEIS}
has been studied by 
Beissinger and Peled
\cite{BPeled}.
Among other things, they show that
\begin{equation}
\sum_{n=0}^\infty t(n) \frac{x^n}{n!}=
\frac{(1-x)e^x}{2-e^x}
\end{equation}
so, by Taylor expansion,
$$
\begin{tabular}{r|rrrrrrrrrr}
$n$&1&2&3&4&5&6&7&8&9&10\\
\hline
$t(n)$&1&2&8&46&332&2874&29024&334982&4349492
& 62749906
\end{tabular}
$$
and by expanding the singularities (\cf{} \cite[Chapter IV]{FS})
the exact formula
\begin{equation}
\frac{t(n)}{n!}
=\sum_{k=-\infty}^\infty
\Bigpar{\frac{1}{\log 2+2\pi\ii
  k}-1}\Bigparfrac{1}{\log{2}+2\pi\ii k}^n,
\qquad n\ge2,
\end{equation}
where the leading term is the one with $k=0$, and thus the asymptotics
\begin{equation}\label{tn}
\frac{t(n)}{n!}=
\Bigpar{\frac{1}{\log 2}-1}\Bigparfrac{1}{\log{2}}^n+\epsilon(n),
\qquad |\epsilon(n)|\leq \frac{2\zeta(n)}{(2\pi)^n},
\end{equation}
where $\zeta(n)$ is the zeta function and thus $\zeta(n)\to1$;
furthermore,
\begin{equation}\label{rn}
t(n)=2R_n-2nR_{n-1},\ n\geq 2, 
\text{ with } 
R_n
=\sum_{k=1}^n k!S(n,k)
=\sum_{\ell=0}^\infty \frac{\ell^n}{2^{\ell+1}},
\end{equation}
where $S(n,k)$ are Stirling numbers;
$R_n$ is  the number of preferential arrangements of $n$ labeled
elements, or number of weak orders on $n$ labeled elements
\cite[A000670]{OEIS}, also called surjection numbers \cite[II.3]{FS}. 
(This is easily seen using the blocks above; the number of labeled
threshold graphs with a given sequence of blocks is twice (since the
first block may be either isolated or dominating) the number of
preferential arrangements with the same block sizes; if we did not
require $b_1\ge2$, this would yield $2R_n$, but we have
to subtract twice the number of preferential arrangements with $b_1=1$,
which is $2nR_{n-1}$.)
We note for future use the generating function \cite[(II.15)]{FS}
\begin{equation}
  \label{rgen}
\sum_{n=0}^\infty R_n\frac{x^n}{n!}=\frac1{2-e^x}.
\end{equation}

Let $t(n,j)$ be the number of labeled threshold graphs with $j$
isolated points. 
Then, as also shown in \cite{BPeled} (and easily seen),
for $ n\geq 2$,
\begin{align}
t(n,0)&=t(n)/2, \nonumber\\ 
\label {tnj}
t(n,j)&=
\begin{cases}
\binom{n}{j} t(n-j,0)=\frac12\binom nj t(n-j), 
& 0\leq j\leq n-2,
\\  
0, & j=n-1,
\\
1, & j=n.
\end{cases}
\end{align}
Thus  knowledge of 
$t(n)$ provides  $t(n,j)$.

These ingredients allow us to give an algorithm for
choosing uniformly in $\LT_n$.
\begin{Alg} \label{Alg2}
\emph{Algorithm for generating uniform random labeled threshold graphs
  of a given order $n$.}
\begin{description} 
\item[ Step 0] 
Make a list of $t(k)$ for $k$ between 1 and $n$. 
Make lists of $t(k,j)$ for $k=1,\dots,n$ and $j=0,\dots,k$.
\item[ Step 1] 
Choose an integer $j_0$ in $\{0,\dots,n\}$ with probability
that $j_0=j$ given by
$t(n,j)/t(n)$. Choose (at random) a subset of
$j_0$ points in \set{1,\dots,n}. These are the isolated vertices in the graph.
Let $n'\=n-j_0$ be the number of remaining points.
If $n'=0$ then stop.
\item[ Step 2] 
 Choose an integer $j_1$ in $\{1,\dots,n'\}$ with probability
that $j_1=j$ given by
$t(n',j)/(t(n')-t(n',0))=2t(n',j)/t(n')$ and choose (at
 random) $j_1$ points of those remaining; 
these will dominate all further points, so add edges between these
 vertices and from them to all remaining points.
Update $n'$ to $n'-j_1$, the number of remaining points.
If $n'=0$ then stop.
\item[ Step 3] 
 Choose an integer $j_2$ in $\{1,\dots,n'\}$ with probability
that $j_2=j$ given by
$2t(n',j)/t(n')$ and choose (at
 random) $j_2$ points of those remaining; 
these will be isolated among the remaining points, so no further edges
are added.
Update $n'$ to $n'-j_1$, the number of remaining points.
If $n'=0$ then stop.
\item[Step 4] 
Repeat from Step 2 with the remaining $n'$ points.
\end{description}
\end{Alg}

Alternatively, instead of selecting the subsets in Steps 1 and 2 at
random, we may choose them in any way, provided the algorithm begins
or ends with a random permutation of the points.

The algorithm works because of a characterization of threshold
graphs by Chv\'atal and Hammer \cite{CH77}, \cf{}  \ref{D1.4}: 
A graph is a threshold graph iff any subset
$S$ of vertices contains at least one isolate or one dominating vertex
(within the graph induced by $S$).
Thus in step 2, since there are no
isolates among the $n'$ vertices left there must be at least one dominating
vertex.
(Note that $j_0$ may be zero, but not $j_1,j_2\dots$.)
The probability distribution for the number of dominating vertices follows
the same law as that of the isolates 
because the complement of a \tg{} is a \tg{}
(or because of the interchangeability
of 0's and 1's in the binary coding given earlier in this
section).

Note that this algorithm treats vertices in the reverse of the order in
\ref{D1.2} where we add vertices instead of peeling them off
as here. It follows that we obtain the extended binary code of the
graph by taking runs of $j_0$ 0's, $j_1$ 1's,  $j_2$
0's, and so on, and then reversing the order. Hence, in the notation
used above, 
the sequence $(b_k)$ equals $(j_{k})$ in reverse order, ignoring
$j_0$ if $j_0=0$.
(In particular
note that the last $j_k\ge2$, since $t(n',n'-1)=0$ for
$n'\ge2$, which corresponds to the first block $b_1\ge2$.)

\begin{example}\label{Egen}
A sequence of $j$s generated for a threshold graph of size 20 is
{\tt 0 2 3 1  1 1 3  1 1 3 1 1  2}, which yields the sequence
{\tt d d i i i d i  d   i i  i  d   i   d   d   d  i  d  i i}
of dominating and isolated vertices. 
A random permutation of \set{1,\dots,20} was generated and we obtain
\begin{equation*}
\newcommand\ttx{\tt\!}
\begin{tabular}{rrrrrrrrrrrrrrrrrrrr}
\ttx13&\ttx2&\ttx11&\ttx15&\ttx8&\ttx20&\ttx6&\ttx12&\ttx16&\ttx4&\ttx18&\ttx7&\ttx10&\ttx9&\ttx14&\ttx17&\ttx
1&\ttx19&\ttx5&\ttx3\\
\ttx d&\ttx d&\ttx i&\ttx i&\ttx i&\ttx d&\ttx i&\ttx d&\ttx i&\ttx i&\ttx i&\ttx d&\ttx i&\ttx d&\ttx d&\ttx d&\ttx i&\ttx
d&\ttx i&\ttx i
\end{tabular}
\end{equation*}
where \texttt{d} signifies that the vertex is connected to all
later vertices in this list.
The degree sequence is thus, taking the vertices in this order:
19, 19, 2, 2, 2, 16, 3, 15, 4, 4, 4, 12, 5, 11, 11, 11, 8, 10,  9, 9.
The 
extended binary code 
$00101110100010100011$
is obtained by translating \texttt i to 0 and \texttt d to 1,
and reversing the order.
\end{example}

\begin{comment}
Note that the last $j$ is at least 2, since $t(n',n'-1)=0$
for every $n'\ge2$. Hence, the sequence of 
\texttt{d}'s and \texttt{i}'s always ends with at least two identical symbols.

Note that the vertex degrees are constant in each block of vertices
assigned in one of the steps, \ie, for each run of
\texttt{i}'s or \texttt{d}'s, and that they 
decrease from each run of \texttt{d}'s to the next and
increase from each run of \texttt{i}'s to the next, with each
vertex labeled \texttt{d} having a higher degree than every
vertex labeled with \texttt{i}; the number of different
vertex degrees is thus equal to the number of $j$'s chosen by
the algorithm, which equals the number of runs in the sequence of
\texttt{d}'s and \texttt{i}'s (or in the extended
binary code). 
\end{comment}

 \begin{figure}[htbp] 
\hbox{       \includegraphics[width=4.5in]{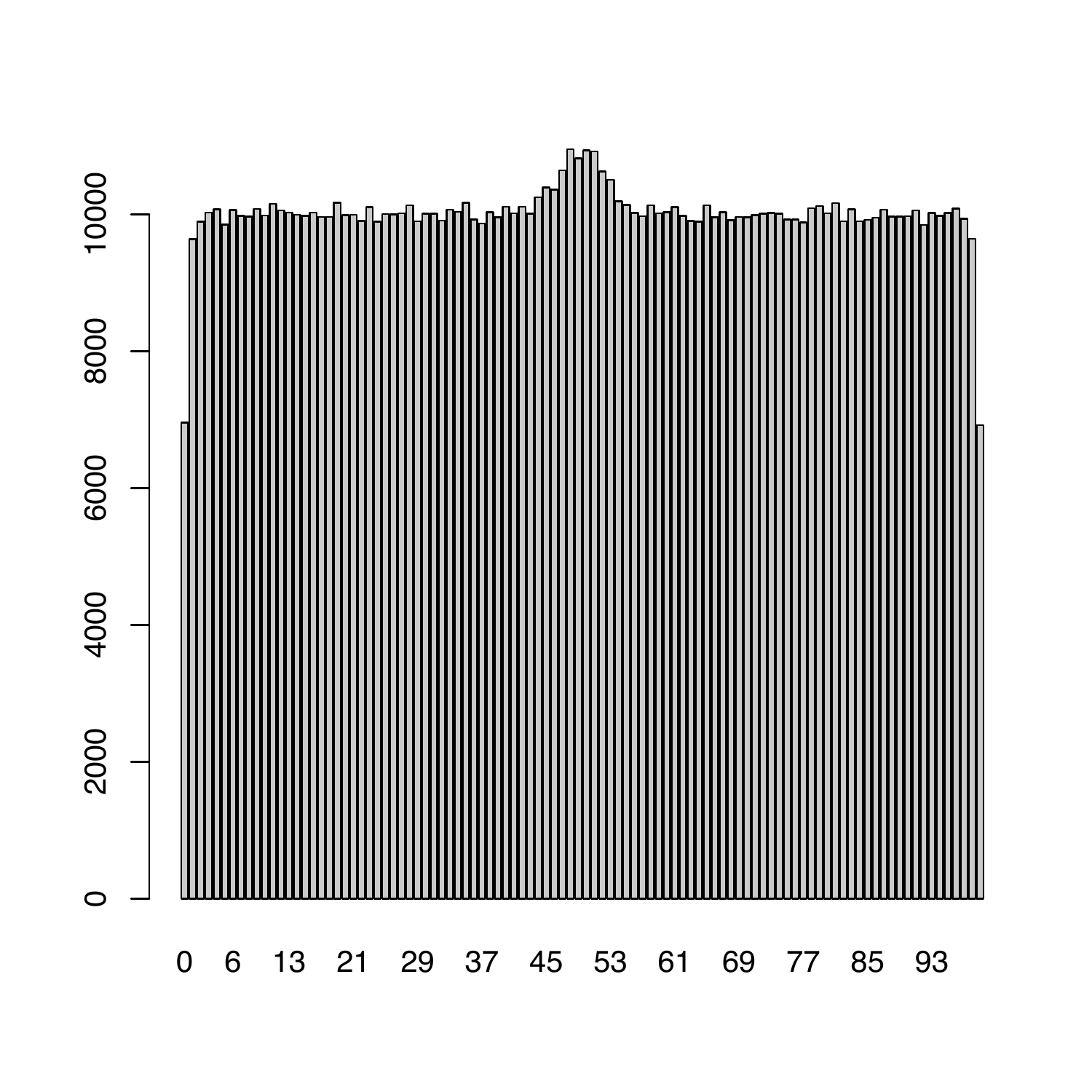}}
   \caption{Threshold graphs were generated according to the algorithm of this section,
   this is the degree histogram.}
   \label{fig:exgr5}
\end{figure}

In \refF{fig:exgr5},
10,000 graphs were generated with $n=100$ according to
the uniform distribution over all labeled threshold graphs.
We discuss the central `bump' and other features of \refF{fig:exgr5}
in Theorem \ref{Tend}.

\subsection{The distribution of block lengths}\label{SSblock}

We have seen in \refSS{SSunlabeled} that if
$b_1,\dots,b_\elm$ are the lengths of the blocks of isolated or
dominating vertices added to the graph when building it as in
\ref{D1.2}, then \eqref{block} holds.
Consider now a sequence of independent integer random variables
$B_1,B_2,\dots$ with $B_1\ge2$ and
$B_j\ge1$ for $j\ge2$, and let
$S_k\=\sum_{j=1}^k B_j$ be the partial sums. If some
$S_\elm=n$, then stop and output the sequence
$(B_1,\dots,B_\elm)$.
Conditioning on the event that $S_\elm=n$ for some $\elm$, this
 yields a random sequence $b_1,\dots,b_\elm$ satisfying
\eqref{block}, and the probability that we obtain a given
sequence $(b_j)_1^\elm$ equals
$c\prod_{j=1}^\elm\P(B_j=b_j)$ for some normalizing
constant $c$. We now specialize to the case when
$B_1\eqd(\BB\mid \BB\ge2)$ and $B_j\eqd(\BB\mid \BB\ge1)$ for $j\ge2$,
for some given random variable $\BB$. Then the (conditional) probability
of obtaining a given $b_1,\dots,b_\elm$ satisfying
\eqref{block} can be written
\begin{equation}\label{bp}
 c' \prod_{j=1}^\elm \frac{\P(\BB=b_j)}{\P(\BB\ge1)}
\end{equation}
(with $c'=c\P(\BB\ge1)/\P(\BB\ge2)$).

There are two important cases. First, if we take
$\BB\sim\Ge(1/2)$, then
$\P(\BB=b_j)/\P(\BB\ge1)=2^{-b_j}$, and
thus \eqref{bp} yields
$c'2^{-\sum_jb_j}=c'2^{-n}$, so the probability
is the same for all allowed sequences.
Hence, in this case the distribution of the constructed sequence
is uniform on
the set of sequences satisfying \eqref{block}, so it equals
the distribution of block lengths for a random unlabeled
\tg{} of size $n$.

The other case is $\BB\sim\Po(\log 2)$. Then
$\P(\BB\ge1)=1-e^{-\log2}=1/2$, and 
$\P(\BB=b_j)/\P(\BB\ge1)=(\log 2)^{b_j}/b_j!$.
Thus, \eqref{bp} yields the probability $c'(\log
2)^n/\prod_jb_j!$, which is proportional to the number
$2\cdot n!/\prod_jb_j!$ of labeled \tg{s} with the block
lengths $b_1,\dots,b_\elm$. 
Hence, in this case the distribution of the constructed sequence
equals
the distribution of block lengths for a random labeled
\tg{} of size $n$. 

We have shown the following result.

\begin{theorem}\label{Tblock}
  Construct a random sequence $B_1,\dots,B_\elm$ as above, based
  on a random variable $\BB$, stopping when 
$\sum_1^\elm B_j\ge n$ and conditioning on 
$\sum_1^\elm B_j= n$. 
  \begin{romenumerate}
	\item
If $\BB\sim\Ge(1/2)$, then $(B_1,\dots,B_\elm)$ has
the same distribution as the block lengths in a random unlabeled
threshold graph of order $n$.
	\item
If $\BB\sim\Po(\log2)$, then $(B_1,\dots,B_\elm)$ has
the same distribution as the block lengths in a random labeled
threshold graph of order $n$.
  \end{romenumerate}
\end{theorem}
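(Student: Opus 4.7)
The proof essentially assembles the ingredients already laid out in Sections~\ref{SSunlabeled}--\ref{SSblock}, so my plan is to verify carefully the two probability computations and then match them to the uniform distributions on $\cT_n$ and $\LT_n$.

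First I would record the key identity~\eqref{bp}: given independent $B_1\eqd(\BB\mid\BB\ge2)$ and $B_j\eqd(\BB\mid\BB\ge1)$ for $j\ge2$, the probability that the walk stops exactly at $n$ with a prescribed sequence $(b_1,\dots,b_\elm)$ satisfying \eqref{block} equals $c'\prod_{j=1}^{\elm} \P(\BB=b_j)/\P(\BB\ge1)$, where $c'$ depends only on $n$ (and absorbs both the normalizing factor from the conditioning and the factor $\P(\BB\ge1)/\P(\BB\ge2)$ that corrects the first term). This is the formula that drives both parts.

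For part~(i), I would plug in $\BB\sim\Ge(1/2)$, so $\P(\BB=b)=2^{-b}$ for $b\ge1$ and $\P(\BB\ge1)=1$ (using the convention in the text); hence $\P(\BB=b_j)/\P(\BB\ge1)=2^{-b_j}$ and the product telescopes to $c'\cdot 2^{-\sum_j b_j}=c'\cdot 2^{-n}$. Thus the conditional law of $(B_1,\dots,B_\elm)$ is uniform over all sequences satisfying \eqref{block}. On the other hand, \refSS{SSunlabeled} shows that each such sequence arises from exactly two unlabeled \tg{s} (complements of each other), so the uniform measure on $\cT_n$ pushes forward under the block-length map to the uniform measure on valid sequences. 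The two distributions therefore coincide.

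For part~(ii), I would take $\BB\sim\Po(\log 2)$, so $\P(\BB\ge1)=1-e^{-\log 2}=1/2$ and $\P(\BB=b)=e^{-\log 2}(\log 2)^b/b!=(\log 2)^b/(2\,b!)$. Therefore $\P(\BB=b_j)/\P(\BB\ge1)=(\log 2)^{b_j}/b_j!$, and \eqref{bp} gives
\begin{equation*}
c'\,\frac{(\log 2)^n}{\prod_{j=1}^\elm b_j!}.
\end{equation*}
For fixed $n$ this is proportional to $1/\prod_j b_j!$, which by \refSS{SSlabeled} is proportional to the number $2\,n!/\prod_j b_j!$ of labeled \tg{s} of order $n$ with block lengths $(b_1,\dots,b_\elm)$. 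Since the uniform distribution on $\LT_n$ assigns mass proportional to this count to each valid sequence, the conditional law of $(B_1,\dots,B_\elm)$ agrees with the block-length distribution of a uniform labeled \tg.

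The only step that requires any genuine care is the justification that conditioning the walk on the event $\{S_\elm=n\text{ for some }\elm\}$ and selecting the sequence $(B_1,\dots,B_\elm)$ from it really produces weights proportional to $\prod_j\P(\BB=b_j)/\P(\BB\ge1)$; this is a routine product-measure computation but is where the exact form of $c'$ must be tracked, and it is the only place where the distinction between $B_1$ and the other $B_j$ enters. Once \eqref{bp} is in hand, both parts reduce to a one-line algebraic identification with the counting formulas from \refSS{SSunlabeled} and \refSS{SSlabeled}.
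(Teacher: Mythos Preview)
Your proposal is correct and follows essentially the same argument as the paper: derive formula~\eqref{bp}, plug in the two distributions for $\BB$, and match the resulting weights to the counts from \refSS{SSunlabeled} and \refSS{SSlabeled}. One small caveat is that the paper's $\Ge(1/2)$ convention allows $\BB=0$ (so $\P(\BB\ge1)=1/2$ rather than $1$), but since only the ratio $\P(\BB=b_j)/\P(\BB\ge1)=2^{-b_j}$ enters, this does not affect your computation.
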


It follows that the length of a typical (for example a random) block
converges in distribution to $(\BB\mid\BB\ge1)$.
\refT{Tblock} 
also leads to another algorithm to construct uniform random \tg{s}.

\begin{Alg} \label{Alg3}
\emph{Algorithm for generating uniform unlabeled or labeled \tg{s}
 of a given order $n$.}
 \begin{description}
	\item[Step 1]
In the unlabeled case, let $\BB\sim\Ge(1/2)$. In the
labeled case, let $\BB\sim\Po(\log2)$.
	\item[Step 2]
Choose independent random numbers $B_1,B_2,\dots,B_\elm$, with
$B_1\eqd(\BB\mid\BB\ge2)$  and
$B_j\eqd(\BB\mid\BB\ge1)$, $j\ge2$, until the sum $\sum_1^\elm B_j\ge n$.
	\item[Step 3]
If $\sum_1^\elm B_j> n$, start again with Step 2.
	\item[Step 4]
We have found $B_1,\dots,B_\elm$ with $\sum_1^\elm B_j=n$.
Toss a coin to decide whether the first block is isolated or dominating;
the following blocks alternate. Construct a threshold graph by adding
vertices as in \ref{D1.2}, block by block.
	\item[Step 5]
In the labeled case, make a random labeling of the graph.
  \end{description}
\end{Alg}

By standard renewal theory, the probability that $\sum_1^\elm B_j$ is
exactly $n$ is asymptotically
$1/\E(\BB\mid\BB\ge1)=\P(\BB\ge1)/\E\BB$, which is $1/2$ in
the unlabeled case and $1/(2\log2)\approx0.72$ in the
labeled case, so we do not have to do very many restarts in Step 3.

\section{Graph limits}\label{Sprel}

This section reviews needed tools from the emerging field of graph limits. 

\subsection{Graph limits}\label{SSgraphlim}
Here we review briefly
the theory of graph limits as described in
\Lovaszetal{} \cite{LSz},
\Borgsetal{} \cite{BCL1} and
Diaconis and Janson \cite{DJ}.

If $F$ and $G$ are two graphs, 
let $t(F,G)$ be the probability that a random mapping $\phi:V(F)\to V(G)$
defines a graph homomorphism, \ie, that $\phi(v)\phi(w)\in E(G)$ when
$vw\in E(F)$. (By a random mapping we mean a mapping uniformly chosen
among all $v(G)^{v(F)}$ possible ones; the images of the vertices in
$F$ are thus independent and uniformly distributed over $V(G)$, \ie,
they are obtained by random sampling with replacement.)

The basic definition is that a sequence $G_n$ of (generally
unlabeled) graphs
converges if $t(F,G_n)$ converges for every graph $F$;
as  in \cite{DJ} we will further assume
$v(G_n)\to\infty$.
More precisely, 
the (countable and discrete) set $\cU$
of all unlabeled graphs can be embedded in a compact metric space
$\cuq$ such that a sequence 
$G_n\in\cU$ of graphs with $v(G_n)\to\infty$
converges in $\cuq$ to some limit $\gG\in\cuq$ if and only if $t(F,G_n)$
converges for every graph $F$ (see \cite{LSz}, \cite{BCL1}, \cite{DJ}). 
Let $\cuoo\=\cuq\setminus \cU$ be the
set of proper limit elements; we call the elements of $\cuoo$
\emph{graph limits}.
The functionals $t(F,\cdot)$
extend to continuous functions on $\cuq$, so $G_n\to\gG\in\cuoo$ if and
only if $v(G_n)\to\infty$ and
$t(F,G_n)\to t(F,\gG)$ for every graph $F$.

Let $\cw$ be the set of all measurable functions $W:\oi^2\to\oi$ and
let $\cws$ be the subset of symmetric functions.
The main result of \Lovaszetal{} \cite{LSz} is that every element of
$\cuoo$ can be represented by a (non-unique) function $W\in\cws$.
We let $\gG_W\in\cuoo$ denote the graph limit defined by $W$.
(We sometimes use the notation $\gG(W)$ for readability.)
Then, for every graph $F$, 
\begin{equation}\label{tuw}
  t(F,\uw)
=
\int_{\oi^{v(F)}} \prod_{ij\in E(F)} W(x_i,x_j) \dd x_1\dotsm \dd x_{v(F)}.
\end{equation}
Moreover, define, for every $n\ge1$, a random graph
$\gwn$ as follows: first choose a sequence $X_1,X_2,\dots, X_n$ 
of \iid{} random variables uniformly distributed on $\oi$, and then,
given this sequence, 
for each pair $(i,j)$ with $i<j$ draw an edge 
$ij$ with probability $W(X_i,X_j)$, independently for
all pairs $(i,j)$ with $i<j$.
Then the random graph $\gwn$ converges to $\ggw$ \as{} as \ntoo.

If $G$ is a graph, with $V(G)=\set{1,\dots,v(G)}$ for simplicity,
we define a function $W_G\in\cws$ by partitioning
$\oi$ into $v(G)$ intervals $I_i$, $i=1,\dots,v(G)$, and letting $W_G$
be the indicator $\ett{ij\in E(G)}$ on $I_i\times I_j$. (In other
words, $W_G$ is a step function corresponding to the adjacency matrix
of $G$.) We let $\pi(G)\=\gG(W_G)$ denote the corresponding object in
$\cuoo$. 
It follows easily from \eqref{tuw} that $t(F,\pi(G))=t(F,G)$ for every
graph $F$. In particular, if $G_n$ is a sequence of graphs with
$v(G_n)\to\infty$, then $G_n$ converges to some graph limit $\gG$ if
and only if $\pi(G_n)\to\gG$ in $\cuoo$.
(Unlike \cite{LSz} and \cite{BCL1} we distinguish between
graphs and limit objects and we do not identify $G$ and $\pi(G)$, see
\cite{DJ}.) 

\subsection{Bipartite graphs and their limits}
In the bipartite case,
there are analoguous definitions and results
(see \cite{DJ} for further details). 
We define a bipartite graph to be a graph $G$ with an explicit
bipartition $V(G)=V_1(G)\cup V_2(G)$ of the vertex set, such
that the edge set $E(G)\subseteq V_1(G)\times V_2(G)$.
Then we define $t(F,G)$ in the same way as above but now for
bipartite graphs $F$, by
letting $\phi=(\phi_1,\phi_2)$ be a pair of random mappings
$\phi_j:V_j(F)\to V_j(G)$.
We let $\cB$ be the set of all unlabeled bipartite graphs
and embed $\cB$ in a compact metric space $\cbq$.
A sequence $(G_n)$ of bipartite graphs with $v_1(G_n),v_2(G_n)\to\infty$
converges in $\cbq$ if and only if $t(F,G_n)$ converges for every
bipartite graph $F$. Let $\cboo$ be the (compact) set of all such
limits; we call the elements of $\cboo$
\emph{bipartite graph limits}.
Every element of $\cboo$ can be represented by a (non-unique) function
$W\in\cW$. We let $\ggww\in\cboo$ denote the element represented by
$W$ and have, for every bipartite $F$
\begin{equation}\label{tuww}
  t(F,\uww)
=
\int_{\oi^{v_1(F)+v_2(F)}} \prod_{ij\in E(F)} W(x_i,y_j) 
  \dd x_1\dotsm \dd x_{v_1(F)}  \dd y_1\dotsm \dd y_{v_2(F)}.
\end{equation}

Given $W\in\cW$ and $n_1,n_2\ge1$, we define
a random bipartite graph $G(n_1,n_2,W)$ by an analogue of the
construction in \refSS{SSgraphlim}:
 first choose two sequences $X_1,X_2,\dots, X_{n_1}$  and
 $Y_1,Y_2,\dots, Y_{n_2}$  
of \iid{} random variables uniformly distributed on $\oi$, and then,
given thess sequences, 
for each pair $(i,j)$ draw an edge 
$ij$ with probability $W(X_i,Y_j)$, independently for
all pairs $(i,j)\in [n_1\times[n_2]$. 

If $G$ is a bipartite graph we define $W_G\in\cW$ similarly as
above (in general with 
different numbers of steps in the two variables; note that $W_G$
now in general is not symmetric) and let 
$\pib(G)\=\ggb(W_G)$. 
Then, 
by \eqref{tuww}, $t(F,\pi(G))=t(F,G)$ for every bipartite
graph $F$. Hence, if $G_n$ is a sequence of bipartite graphs with
$v_1(G_n),v_2(G_n)\to\infty$, then $G_n$ converges to some bipartite
graph limit $\gG$ if
and only if $\pi(G_n)\to\gG$ in $\cboo$.

\subsection{Cut-distance}
\Borgsetal{} \cite[Section 3.4]{BCL1} define a (pseudo-)metric $\gdc$
on $\cws$ called the \emph{cut-distance}. 
This is only a pseudo-metric since two different functions in $\cws$
may have cut-distance 0 (for example, if one is obtained by a measure
preserving transformation of the other, see further
\cite{BCL:unique} and \cite{DJ}), and it is shown 
in \cite{BCL1} that, in fact, 
$\gdc(W_1,W_2)=0$ if and only if $t(F,W_1)=t(F,W_2)$ for every graph
$F$, \ie, if and only if $\gG_{W_1}=\gG_{W_2}$ in $\cuoo$. Moreover,
the quotient space $\cws/\gdc$, 
where we identify elements of $\cws$ with cut-distance 0, 
is a
compact metric space and
the mapping $W\mapsto \ggw$ is a homeomorphism of $\cws/\gdc$ onto $\cuoo$.

This extends to the bipartite case. 
In this case, we define $\gdcb$ on
$\cW$ as $\gdc$ is defined in  \cite[Section 3.4]{BCL1}, but allowing
different measure 
preserving mappings for the two coordinates. 
Then,
if we identify elements in $\cW$ with cut-distance 0,
 $W\mapsto\ggww$ becomes a homeomorphism of $\cW/\gdcb$ 
onto $\cboo$.
Instead of repeating and modifying
the complicated proofs from \cite{BCL1}, one can use their result in
the symmetric case and define an embedding $W\mapsto\tW$
of $\cW$ into $\cws$ by
\begin{equation*}
  \tW(x,y)=
  \begin{cases}
0, & x<1/2,\, y<1/2;	
\\
1, & x>1/2,\, y>1/2;	
\\
\frac14+\frac12 W(2x-1,2y), & x>1/2,\, y<1/2;	
\\
\frac14+\frac12 W(2y-1,2x), & x<1/2,\, y>1/2.
  \end{cases}
\end{equation*}
It is easily seen that $\gdcb(W_1,W_2)$ and $\gdc(\tW_1,\tW_2)$ are
equal within some constant factors, for $W_1,W_2\in\cW$, and that for
each graph $F$, $t(F,\tW)$ is a linear combination of $t(F_i,W)$ for a
family of bipartite graphs $F$ (obtained by partitioning $V(F)$ and
erasing edges within the two parts).
This and the results in \cite{BCL1}, 
together with the simple fact that $W\mapsto t(F,W)$ is
continuous for $\gdcb$ for every bipartite graph $F$, imply easily
the result  claimed.
 
\subsection{A reflection involution}
If $G$ is a bipartite graph, let $G\sss$ be the graph obtained by
interchanging the order of the two vertex sets; thus,
$V_j(G\sss)=V_{3-j}(G)$ and $E(G\sss)=\set{uv:vu\in E(G)}$.
We say that $G\sss$ is the \emph{reflection} of
$G$.
Obviously, $t(F,G\sss)=t(F\sss,G)$ for any bipartite graphs $F$ and $G$. It
follows that if $G_n\to \gG\in\cbq$, then $G_n\sss\to\gG\sss$ for some
$\gG\sss\in\cbq$, and this defines a continuous map of $\cbq$ onto
itself which extends the map just defined for bipartite graphs.
We have, by continuity,
\begin{equation}\label{sss}
  t(F,\gG\sss)=t(F\sss,\gG),
\qquad F\in\cB,\, G\in\cbq.
\end{equation}
Furthermore, $\gG\sss{}\sss=\gG$, so the map is an involution, and it
maps $\cboo$ onto itself.

For a function $W$ on $\oii$, let $W\sss(x,y)\=W(y,x)$
be its reflection in the main diagonal. It follows from \eqref{tuww}
and \eqref{sss} that
$\ggb(W\sss)=\ggb(W)\sss$.

\subsection{Threshold graph limits}\label{SSthrlim}

Let $\cT\=\bigcup_{n=1}^\infty\cT_n$ be the
family of all (unlabeled) \tg{s}. Thus $\cT$ is a
subset of the family $\cU$ of all unlabeled graphs, and we
define $\ctq$ as the closure of $\cT$ in
$\cuq$, and $\ctoo\=\ctq\setminus\cT=\ctq\cap\cuoo$,
\ie, the set of proper limits of sequences of threshold graphs; we
call these \emph{\tg{} limits}.

In the bipartite case, we similarly consider the set 
$\ctb\=\bigcup_{n_1,n_2\ge1}\ctnn\subset\cB$ of all bipartite threshold
graphs, and let 
$\ctbq\subset\cbq$ be its closure in $\cbq$  and
$\ctboo\=\ctbq\cap\cboo$ the set 
of proper limits of sequences of \btg{s};
we call these \emph{\btg{} limits}.

Note that $\ctq$, $\ctoo$, $\ctbq$,
$\ctboo$ are compact metric spaces, since they are closed
subsets of $\cuq$ or $\cbq$. 

We will give concrete representations of 
the threshold graph limits in \refS{Slim}.
Here we only give a more abstract characterization.

Recall that $t(F,G)$ is defined as the proportion of maps
$V(F)\to V(G)$ that are graph homomorphisms. Since we only
are interested in limits with $v(G)\to\infty$, it is
equivalent to consider injective maps only. By inclusion-exclusion, it
is further equivalent to consider $\tind(F,G)$, defined as
the probability that a random injective map $V(F)\to V(G)$
maps $F$ isomorphically onto an induced copy of $F$ in
$G$; in other words, $\tind(F,G)$ equals the number of
labeled induced copies of $F$ in $G$ divided by 
the falling factorial $v(G)\dotsm(v(G)-v(F)+1)$.
Then
$\tind(F,\cdot)$ extends by continuity to $\cuq$, and
by inclusion-exclusion, 
for graph limits $\gG\in\cuoo$, 
$\tind(F,\gG)$ can be written as a linear combination of
$t(F_i,\gG)$ for subgraphs $F_i\subseteq F$.
We can define $\tind$ for bipartite graphs in the same way;
further details are in \cite{BCL1} and \cite{DJ}.

\begin{theorem}\label{TaltC4}
  \begin{thmenumerate}
\item
Let $\gG\in\cuoo$; \ie, $\gG$ is a graph limit. Then
$\gG\in\ctoo$ if and only if
$\tind(P_4,\gG)=\tind(C_4,\gG)=\tind(2K_2,\gG)=0$.
\item
Let $\gG\in\cboo$; \ie, $\gG$ is a
bipartite graph limit. Then
$\gG\in\ctboo$ if and only if
$\tind(2K_2,\gG)=0$.
  \end{thmenumerate}
\end{theorem}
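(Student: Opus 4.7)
The plan is to handle the two implications separately in each of (i) and (ii); the two parts follow a common pattern.

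The forward direction of (i) is a direct continuity argument: if $\gG=\lim G_n$ with each $G_n$ a threshold graph, then $\tind(F,G_n)=0$ for every $n$ and every $F\in\{P_4,C_4,2K_2\}$ by property~\ref{D1induced}, so the continuous extension of $\tind(F,\cdot)$ to $\cuq$ recalled in \refSS{SSthrlim} yields $\tind(F,\gG)=0$. Part~(ii) is identical, using \ref{B1induced} and the continuous extension to $\cbq$.

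For the converse of (i), represent $\gG$ by some $W\in\cws$ and sample the random graphs $G_n=\gwn$, which by \refSS{SSgraphlim} converge a.s.\ to $\gG$. Along such a sample path $\tind(F,G_n)\to 0$ for each $F\in\{P_4,C_4,2K_2\}$. I then invoke the induced graph removal lemma of Alon--Fischer--Krivelevich--Szegedy: for any $\eps>0$ there exists $\delta>0$ such that any graph $H$ with $\tind(F,H)<\delta$ for every $F$ in a fixed finite family $\cF$ can be made $\cF$-induced-free by editing at most $\eps\binom{v(H)}{2}$ edges. Applying this with $\cF=\{P_4,C_4,2K_2\}$ and $\eps_n\downarrow 0$ slowly, I obtain graphs $G_n'$ that differ from $G_n$ in $o(n^2)$ edges and contain no induced $P_4$, $C_4$, or $2K_2$; by \ref{D1induced} each $G_n'$ is a threshold graph. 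Editing $o(n^2)$ edges changes the kernel $W_{G_n}$ by $o(1)$ in $L^1$, hence in the cut norm, so $\gdc(W_{G_n},W_{G_n'})\to 0$ and $G_n'\to\gG$ in $\cuq$ by the homeomorphism $\cws/\gdc\to\cuoo$ recalled in \refSS{SSthrlim}. Since each $G_n'\in\cT$ and $\gG\in\cuoo$, this puts $\gG$ in $\ctoo$. The converse of (ii) is entirely parallel: represent $\gG$ by $W\in\cW$, sample $\gwnn$, apply the bipartite induced removal lemma with $\cF=\{2K_2\}$, invoke \ref{B1induced}, and conclude via the homeomorphism $\cW/\gdcb\to\cboo$.

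The main obstacle is the black-box use of the induced removal lemma; though standard, it is a nontrivial input. One could avoid it in the bipartite case~(ii) by a direct structural analysis showing that $\tind(2K_2,W)=0$ forces $W$ to be $\{0,1\}$-valued and monotone after a measure-preserving rearrangement of each coordinate, and hence represents an element of $\ctboo$; but in (i), simultaneously controlling the three forbidden induced subgraphs seems to require either a removal-lemma shortcut or a reduction to the bipartite case via the embedding $W\mapsto\tW$ of \refSS{SSthrlim}.
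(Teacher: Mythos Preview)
Your proof is correct, but the converse direction uses a much heavier tool than necessary. The paper's argument (stated there as a general fact, \refT{TUF}, about any class defined by forbidden induced subgraphs) avoids the removal lemma entirely by exploiting the \emph{exact} identity $\E\tind(F,\gwn)=\tind(F,\gG)$: by the very construction of $\gwn$, the probability that any fixed $v(F)$ vertices induce a labeled copy of $F$ is exactly $\tind(F,W)$. Hence if $\tind(F,\gG)=0$ then $\tind(F,\gwn)=0$ almost surely, so $\gwn$ is already $\cF$-induced-free --- no editing required. Since $\gwn\to\gG$ a.s., $\gG$ lies in the closure of the $\cF$-free graphs. The bipartite case is identical.

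You use only the weaker asymptotic statement $\tind(F,G_n)\to 0$ and then invoke the induced removal lemma to clean up. That works, but it brings in a deep regularity-based result where a one-line expectation computation suffices. Your closing concern that the removal lemma is a ``nontrivial input'' is well-founded but easily dispelled: the $W$-random sampling already produces graphs that are \emph{exactly} $\cF$-free, not merely approximately so. The structural analysis you sketch as an alternative (showing $W$ is monotone $\{0,1\}$-valued) is in fact carried out later in the paper for other purposes, but is not needed for this theorem.
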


In view of \ref{D1induced} and \ref{B1induced}, this is a special case of the
following simple general statement. 

\begin{theorem}\label{TUF}
Let $\cF=\set{F_1,F_2,\dots}$ be a finite or
infinite family of graphs, and let
$\cU_\cF\subseteq\cU$ be the set of all graphs that
do not contain any graph from $\cF$ as an induced subgraph,
\ie,
\begin{equation*}
\cU_\cF\=\set{G\in\cU:\tind(F,G)=0\text{ for } F\in\cF}.  
\end{equation*}
Let $\cuq_\cF$ be the closure of $\cU_\cF$ in $\cuq$.
Then
\begin{equation*}
\cuq_\cF\=\set{\gG\in\cuq:\tind(F,\gG)=0 \text{ for } F\in\cF}.  
\end{equation*}
In other words, if $\gG\in\cuoo$ is a graph limit, then $\gG$ is a limit of
a sequence of graphs in $\cU_\cF$ if and only if 
$\tind(F,\gG)=0$ for $F\in\cF$.

Conversely, if $\gG\in\cuq_\cF\cap\cuoo$ is
represented by a 
function
 $W$, then the random graph $\gwn\in\cU_\cF$
(almost surely).

The same results hold in the bipartite case.
\end{theorem}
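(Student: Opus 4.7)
The plan is to prove the stated set equality in both directions and deduce the ``conversely'' claim as a byproduct; the bipartite case will then follow by an identical argument. First, the forward inclusion $\cuq_\cF \subseteq \{\gG \in \cuq : \tind(F,\gG) = 0 \text{ for } F\in\cF\}$ is immediate from continuity: if $G_n \in \cU_\cF$ converges to $\gG$ in $\cuq$, then $\tind(F, G_n) = 0$ for every $F \in \cF$ (since $G_n$ has no induced copy of $F$), and since $\tind(F,\cdot)$ extends continuously to $\cuq$, passing to the limit yields $\tind(F, \gG) = 0$.

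For the reverse inclusion, let $\gG \in \cuq$ satisfy $\tind(F, \gG) = 0$ for every $F \in \cF$. If $\gG \in \cU$ is a finite graph, then the vanishing of $\tind(F,\gG)$ for $v(F)\le v(\gG)$ literally says $\gG$ has no induced copy of any $F\in\cF$, so $\gG\in\cU_\cF\subseteq\cuq_\cF$. The main case is $\gG\in\cuoo$; here I will fix a representing function $W\in\cws$ (so $\gG = \ggw$) and examine the random graphs $\gwn$, which converge to $\gG$ almost surely. The key identity is that for $n \ge v(F)$,
\begin{equation*}
\E[\tind(F,\gwn)] = \tind(F,\ggw) = \tind(F,\gG),
\end{equation*}
obtained by unwinding the definitions: $\tind(F,\gwn)$ averages, over uniform injections $V(F)\to[n]$, the indicator of producing an induced copy of $F$, and by exchangeability of the $X_i$ each such indicator has expectation equal to the integral $\int_{\oi^{v(F)}}\prod_{ij\in E(F)} W(x_i,x_j)\prod_{ij\notin E(F),\,i\ne j}(1-W(x_i,x_j))\,dx_1\cdots dx_{v(F)}$, which is exactly $\tind(F,\ggw)$.

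Applied to $F\in\cF$, this forces $\tind(F,\gwn)=0$ almost surely, i.e., $\gwn$ has no induced copy of $F$ almost surely. For each fixed $n$, intersecting over the countably many $F\in\cF$ with $v(F)\le n$ gives $\gwn\in\cU_\cF$ almost surely. Combined with $\gwn\to\gG$ almost surely, this produces a sequence in $\cU_\cF$ converging to $\gG$, so $\gG\in\cuq_\cF$; the same step simultaneously establishes the ``conversely'' part of the theorem. The bipartite case is entirely analogous, with $\cU$, $\cuq$, $\gwn$ replaced by $\cB$, $\cbq$, $\gwnn$ and using the two independent uniform samples $X_i$, $Y_j$. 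The only step requiring any care is the expectation identity above, but it is a routine consequence of the construction of $\gwn$ from $W$.
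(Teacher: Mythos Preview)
Your proof is correct and follows essentially the same approach as the paper: continuity of $\tind(F,\cdot)$ for the forward inclusion, and for the reverse, the identity $\E\,\tind(F,\gwn)=\tind(F,\gG)$ forcing $\gwn\in\cU_\cF$ almost surely, combined with $\gwn\to\gG$ a.s. Your write-up is in fact slightly more careful than the paper's (you treat the finite-graph case $\gG\in\cU$ explicitly, make the countable intersection over $F\in\cF$ explicit, and spell out the expectation identity for $\tind$ rather than $t$).
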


\begin{proof}
  If $G_n\to\gG$ with $G\in\cU_\cF$, then
  $t(F,\gG)=\lim_\ntoo t(F,G_n)=0$ for every
  $F\in\cF$, by the continuity of $t(F,\cdot)$.

Conversely, suppose that $\gG\in\cuoo$ and
$t(F,\gG)=0$ for $F\in\cF$, and let $\gG$ be represented by a
function
$W$. It follows from \eqref{tuw} that if
$F\in\cF$ then
$\E t(F,\gwn)=t(F,\gG)=0$, and thus
$t(F,\gwn)=0$ a.s.; consequently
$\gwn\in\cU_\cF$ a.s. This proves the second
statement. Since $\gwn\to\gG$ a.s., it also shows that
$\gG$ is the limit of a sequence in $\cU_\cF$,
and thus $\gG\in\cuq_\cF$, which completes the proof of the first part.
\end{proof}

\section{Degree distributions}\label{Sdeg}
The results in this section hold for general graphs, they are applied to threshold
graphs in section \refS{Slim}. 

Let $\cP$ be the set of probability measures on $\oi$, equipped with
the standard topology of weak convergence, which makes $\cP$ a compact
metric space (see \eg{} Billingsley \cite{Bill}).

If $G$ is a graph, let $d(v)=d_G(v)$ denote the degree of vertex $v\in V(G)$,
and let $D_G$ denote the random variable defined as the degree
$d_G(v)$ of  a randomly chosen vertex $v$ (with the uniform
distribution on $V(G)$). Thus $0\le D_G\le v(G)-1$.
For a bipartite graph we similarly define $\dgg j$
as the degree $d_G(v)$ of  a randomly chosen vertex $v\in V_j(G)$,
$j=1,2$.
Note that $0\le \dgg1\le v_2(G)$ and $0\le \dgg2\le v_1(G)$.
Since we are interested in dense graphs, we will normalize these 
random degrees
to
$D_G/v(G)$ and, in the bipartite case, $\dgg1/v_2(G)$ and
$\dgg2/v_1(G)$; these are random variables in \oi.
The distribution of $D_G/v(G)$ will be called the (normalized) degree
distribution of $G$ and denoted by $\nu(G)\in\cP$; 
in other words, $\nu(G)$ is the empirical distribution function of
\set{d_G(v)/v(G): v\in V(G)}.
In the bipartite case we
similarly have two 
(normalized) degree distributions: $\nu_1(G)$ for $V_1(G)$ and
$\nu_2(G)$ for $V_2(G)$. 

The moments of the degree distribution(s) are given by the functional
$t(F,\cdot)$ for stars $F$, as stated in the following lemma.
We omit the proof, which is a straightforward consequence of the definitions.
\begin{lemma}\label{Ldegree}
  The moments of $\nu(G)$ are given by
  \begin{equation}\label{degmom}
	\intoi t^k\dd\nu(G)(t)
=t(K_{1,k},G),
\qquad k\ge1,
  \end{equation}
where $K_{1,k}$ is a star with $k$ edges.

In the bipartite case, similarly, for $k\ge1$,
  \begin{align}\label{degmomb}
& \intoi t^k\dd\nu_1(G)(t)
=t(K_{1,k},G),
&&
	\intoi t^k\dd\nu_2(G)(t)
=t(K_{k,1},G).
 \end{align}
\end{lemma}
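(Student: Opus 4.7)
The plan is to unpack both sides of \eqref{degmom} directly from the definitions and observe that they agree after trivial bookkeeping. The key is to count homomorphisms from a star $K_{1,k}$ by first fixing the image of the center.

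First I would address the non-bipartite case. Write $V(K_{1,k}) = \{v_0, v_1, \dots, v_k\}$ with $v_0$ the center. A map $\phi : V(K_{1,k}) \to V(G)$ is a graph homomorphism if and only if $\phi(v_0)\phi(v_i) \in E(G)$ for every $i = 1, \dots, k$. Fixing $\phi(v_0) = v$, the images $\phi(v_1), \dots, \phi(v_k)$ must lie (independently) in the neighborhood of $v$, so there are exactly $d_G(v)^k$ valid completions. Summing over $v$ and dividing by the total number $v(G)^{k+1}$ of maps gives
\begin{equation*}
  t(K_{1,k}, G) = \frac{1}{v(G)^{k+1}} \sum_{v \in V(G)} d_G(v)^k
  = \frac{1}{v(G)} \sum_{v \in V(G)} \Bigparfrac{d_G(v)}{v(G)}^k.
\end{equation*}
The right-hand side is $\E\bigsqpar{(D_G/v(G))^k}$, which by the definition of $\nu(G)$ as the empirical distribution of $\set{d_G(v)/v(G) : v \in V(G)}$ equals $\intoi t^k \dd\nu(G)(t)$. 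This establishes \eqref{degmom}.

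For the bipartite identities in \eqref{degmomb}, the argument is the same after replacing the single uniform image by the pair of independent uniform images $\phi_1 : V_1(K_{1,k}) \to V_1(G)$ and $\phi_2 : V_2(K_{1,k}) \to V_2(G)$ required by the definition of $t(F,G)$ for bipartite $F,G$. Placing the center of $K_{1,k}$ in $V_1$ and its $k$ leaves in $V_2$, the same conditioning yields
\begin{equation*}
  t(K_{1,k}, G) = \frac{1}{v_1(G)\, v_2(G)^k} \sum_{v \in V_1(G)} d_G(v)^k
  = \intoi t^k \dd \nu_1(G)(t),
\end{equation*}
and the identity for $K_{k,1}$ follows by symmetry (interchange the roles of $V_1$ and $V_2$).

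There is no real obstacle here; the only thing worth care is matching the normalizing denominators, namely $v(G)^{k+1}$ in the ordinary case and $v_1(G)\, v_2(G)^k$ (resp.\ $v_2(G)\, v_1(G)^k$) in the bipartite cases, to the exponents implicit in the definitions of $\nu(G)$ and of $\nu_j(G)$ as distributions of $D_G/v(G)$ and $\dgg{j}/v_{3-j}(G)$.
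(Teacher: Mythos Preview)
Your argument is correct and is exactly the direct unpacking of definitions that the paper has in mind; the paper in fact omits the proof entirely, calling it ``a straightforward consequence of the definitions.'' Your write-up fills in precisely that straightforward computation.
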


This enables us to extend the definition of the (normalized) degree
distribution to 
the limit objects by continuity.

\begin{theorem}\label{Tdegree}
If $G_n$ are graphs with $v(G_n)\to\infty$ and $G_n\to \Gamma$ for some
$\Gamma\in\cuq$ as \ntoo, then 
$\nu(G_n)\to \nu(\Gamma)$ for some distribution $\nu(\Gamma)\in \cP$.  
This defines the `degree distribution' $\nu(\Gamma)$ (uniquely) for every graph
limit $\Gamma\in\cuoo$, and $\Gamma\mapsto\nu(\Gamma)$ is a continuous
map $\cuoo\to\cP$. Furthermore, \eqref{degmom} holds for all $G\in\cuq$.

Similarly, in the bipartite case, $\nu_1$ and $\nu_2$  extend
to continuous maps
$\cbq\to\cP$ such that \eqref{degmomb} holds for all $G\in\cbq$.
Furthermore, $\nu_2(\gG)=\nu_1(\gG\sss)$ for  $\gG\in\cbq$.
\end{theorem}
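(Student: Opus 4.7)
The plan is to view $\nu(G)$ as a probability measure on the compact interval $\oi$ whose moments are already read off by the functionals $t(K_{1,k},\cdot)$ via \refL{Ldegree}, and to exploit the fact that each $t(F,\cdot)$ extends continuously to $\cuq$ by the very definition of the topology there. For any $\gG\in\cuq$ set $m_k(\gG)\=t(K_{1,k},\gG)$ for $k\ge1$ and $m_0(\gG)\=1$; each $m_k$ is continuous on $\cuq$.

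Now fix $\gG\in\cuoo$ and a sequence $G_n\to\gG$ with $v(G_n)\to\infty$. Since $\cP$ (probability measures on $\oi$ under weak convergence) is a compact metric space, $\nu(G_n)$ has convergent subsequences in $\cP$; along any such subsequence $\nu(G_{n_j})\to\nu^*$, the boundedness and continuity of $t\mapsto t^k$ give
\begin{equation*}
\intoi t^k\dd\nu^*(t)=\lim_j\intoi t^k\dd\nu(G_{n_j})(t)=\lim_j t(K_{1,k},G_{n_j})=m_k(\gG)
\end{equation*}
by \refL{Ldegree} and continuity of $m_k$. Since a probability measure on the compact interval $\oi$ is uniquely determined by its moments (polynomials are dense in $C(\oi)$ by Weierstrass), all subsequential limits agree, so the full sequence $\nu(G_n)$ converges to a unique measure $\nu(\gG)\in\cP$ with moments $m_k(\gG)$. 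This proves the convergence claim, defines $\nu(\gG)$ unambiguously on $\cuoo$, and shows that \eqref{degmom} persists in the limit; its extension to all of $\cuq$ is automatic since finite graphs are handled directly by \refL{Ldegree}.

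For continuity of $\gG\mapsto\nu(\gG)$, suppose $\gG_n\to\gG$ in $\cuq$. Then $\intoi t^k\dd\nu(\gG_n)=m_k(\gG_n)\to m_k(\gG)=\intoi t^k\dd\nu(\gG)$ for every $k\ge0$, and since $\oi$ is compact, moment convergence implies weak convergence, so $\nu(\gG_n)\to\nu(\gG)$ in $\cP$. Restricting to $\cuoo$ gives the claimed continuous map. The bipartite case is strictly parallel using \eqref{degmomb}, with $K_{1,k}$ and $K_{k,1}$ providing the two families of moments that define $\nu_1(\gG)$ and $\nu_2(\gG)$, respectively. Finally, since $K_{k,1}=K_{1,k}\sss$ as bipartite graphs, \eqref{sss} gives $t(K_{k,1},\gG)=t(K_{1,k},\gG\sss)$, so $\nu_2(\gG)$ and $\nu_1(\gG\sss)$ share all moments and hence coincide. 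The only non-routine ingredient is uniqueness in the Hausdorff moment problem, and that is immediate here because the support $\oi$ is compact; no Carleman-type growth estimate is needed.
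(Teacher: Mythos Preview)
Your proof is correct and follows exactly the same route as the paper's: \refL{Ldegree} identifies the moments of $\nu(G)$ with $t(K_{1,k},G)$, and the method of moments on the compact interval $\oi$ does the rest. The paper states this in one line (``An immediate consequence of \refL{Ldegree} and the method of moments''), while you have carefully unpacked the compactness/subsequence argument and the Weierstrass justification for moment determinacy; your derivation of $\nu_2(\gG)=\nu_1(\gG\sss)$ from \eqref{sss} via $K_{k,1}=K_{1,k}\sss$ is likewise what the paper intends.
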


\begin{proof}
  An immediate consequence of \refL{Ldegree} and the method of
  moments. The last sentence follows from \eqref{degmomb} and \eqref{sss}.
\end{proof}

\begin{remark*}
  \refT{Tdegree} says that the degree distribution 
$\nu$ is a \emph{testable} graph parameter in the sense of \Borgsetal{}
  \cite{BCL1}, see in particular
  \cite[Section 6]{BCL1}.
(Except that $\nu$ takes values in $\cP$ instead of $\bbR$.)
\end{remark*}

If $\gG$ is represented by a function $W$ on $\oii$, we can easily find its
degree distribution from $W$.

\begin{theorem}\label{Tdw}
If\/ $W\in\cws$, then $\nu(\uw)$ equals the distribution of
$\intoi W(U,y)\dd y$, where $U\sim U(0,1)$.

Similarly, in the bipartite case,  
if\/ $W\in\cw$, then 
$\nu_1(\uww)$ equals the distribution of $\intoi W(U,y)\dd y$
and
$\nu_2(\uww)$ equals the distribution of $\intoi W(x,U)\dd x$.
\end{theorem}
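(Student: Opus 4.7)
The plan is to apply the method of moments via the characterization in Lemma \ref{Ldegree}, extended to the limit objects through Theorem \ref{Tdegree}. Both $\nu(\uw)$ and the law of $\intoi W(U,y)\dd y$ are probability measures on $\oi$, so they are determined by their moments, and it will suffice to check moment equality.

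First, I would compute the $k$-th moment of $\nu(\uw)$ using \eqref{degmom}, which gives $t(K_{1,k},\uw)$. Using the integral representation \eqref{tuw} for the star $K_{1,k}$ with center labeled $0$ and leaves labeled $1,\dots,k$, I get
\begin{equation*}
t(K_{1,k},\uw)=\int_{\oi^{k+1}}\prod_{i=1}^k W(x_0,x_i)\dd x_0\dd x_1\dotsm\dd x_k
=\intoi\Bigpar{\intoi W(x_0,y)\dd y}^k\dd x_0,
\end{equation*}
where the second equality is Fubini. The right-hand side is precisely $\E\bigsqpar{\bigpar{\intoi W(U,y)\dd y}^k}$ with $U\sim U(0,1)$, i.e., the $k$-th moment of the distribution of $\intoi W(U,y)\dd y$.

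Since $W$ takes values in $\oi$, the random variable $\intoi W(U,y)\dd y$ is supported on $\oi$, so its distribution is uniquely determined by its moments. Matching moments for all $k\ge1$ then yields $\nu(\uw)=\cL\bigpar{\intoi W(U,y)\dd y}$, proving the symmetric case.

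For the bipartite case, the argument is essentially identical but uses \eqref{degmomb} and \eqref{tuww}. Taking $F=K_{1,k}$ (with one vertex in $V_1(F)$ and $k$ in $V_2(F)$) gives
\begin{equation*}
t(K_{1,k},\uww)=\intoi\Bigpar{\intoi W(x,y)\dd y}^k\dd x,
\end{equation*}
which is the $k$-th moment of $\cL\bigpar{\intoi W(U,y)\dd y}$; this identifies $\nu_1(\uww)$. The roles of the two coordinates swap for $K_{k,1}$ (or, equivalently, one invokes the reflection relation $\nu_2(\gG)=\nu_1(\gG\sss)$ from \refT{Tdegree} together with $\ggb(W\sss)=\ggb(W)\sss$), yielding $\nu_2(\uww)=\cL\bigpar{\intoi W(x,U)\dd x}$. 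There is no substantive obstacle here: the only things to keep track of are correctly applying \eqref{tuw}/\eqref{tuww} to stars and invoking the fact that distributions on a compact interval are moment-determined.
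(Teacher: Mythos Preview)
Your proof is correct and follows essentially the same route as the paper: compute the $k$-th moment of $\nu(\uw)$ via \eqref{degmom} as $t(K_{1,k},\uw)$, expand this by \eqref{tuw}, factor using Fubini, and conclude by the method of moments on $\oi$; the bipartite case is handled identically with \eqref{degmomb} and \eqref{tuww}. The paper's proof is more terse but contains exactly the same chain of equalities.
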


\begin{proof}
  By \eqref{degmom} and \eqref{tuw},
  \begin{equation*}
	\begin{split}
\intoi t^k\dd\nu(\uw)(t)
&=t(K_{1,k},\uw)
=
\int_{\oi} \lrpar{\int_{\oi} W(x,y) \dd y}^k \dd x	  
\\&
=
\E \lrpar{\int_{\oi} W(U,y) \dd y}^k
	\end{split}
  \end{equation*}
for every $k\ge1$, and the result follows.
The bipartite case is similar, using \eqref{tuww}.
\end{proof}

If a graph $G$ has $n$ vertices, its number of edges is
\begin{equation*}
  \begin{split}
	|E(G)|=\half\sum_{v\in V(G)}d(v)
=\frac n2\E D_G
=\frac {n^2}2\E (D_G/n)
=\frac {n^2}2\intoi t\dd\nu(G)(t).
  \end{split}
\end{equation*}
Hence, the edge density of $G$ is
\begin{equation}\label{density}
  \begin{split}
	|E(G)|/\binom n2
=\frac n{n-1}\intoi t\dd\nu(G)(t).
  \end{split}
\end{equation}
If $(G_n)$ is a sequence of graphs with $v(G_n)\to\infty$ and
$G_n\to\gG\in\cuoo$, we see from \eqref{density} and \refT{Tdegree}
that the graph densities converge to
$\intoi t\dd\nu(\gG)(t)$, the mean of the distribution $\nu(\gG)$,
which thus may be called the (edge) density of $\gG\in\cuoo$.

If $\gG$ is represented by a function $W$ on $\oii$, \refT{Tdw} yields
the following.

\begin{corollary}
  $\gG_W$ has edge density $\iint_{\oii} W(x,y)\dd x\dd y$ for every $W\in\cws$.
\end{corollary}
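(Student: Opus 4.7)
The corollary is essentially a direct combination of two facts stated just before it. The plan is to identify the edge density with the mean of the degree distribution, and then compute that mean using the description of $\nu(\gG_W)$ provided by \refT{Tdw}.

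More concretely, I would proceed in three short steps. First, invoke the observation in the paragraph preceding the corollary: for any graph limit $\gG\in\cuoo$, the edge density equals $\intoi t\dd\nu(\gG)(t)$, i.e.\ the mean of the degree distribution $\nu(\gG)$. Apply this to $\gG=\gG_W$. Second, invoke \refT{Tdw}, which identifies $\nu(\gG_W)$ as the distribution of the random variable $h(U):=\intoi W(U,y)\dd y$ where $U\sim U(0,1)$. Consequently the mean of $\nu(\gG_W)$ is $\E\, h(U)$. Third, evaluate this expectation by Fubini's theorem:
\begin{equation*}
\E\,h(U)=\intoi\Bigpar{\intoi W(x,y)\dd y}\dd x=\iint_{\oii}W(x,y)\dd x\dd y,
\end{equation*}
which is the desired formula. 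The application of Fubini is legitimate since $W$ is a measurable function into $\oi$, hence bounded and nonnegative.

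There is no real obstacle here: once \refT{Tdw} is in hand, the statement is a one-line computation. The only thing to be careful about is that the edge density of the limit was defined only implicitly (as the limit of edge densities along an approximating sequence), so one should explicitly invoke the identification of this limit with $\intoi t\dd\nu(\gG)(t)$ given in the discussion surrounding \eqref{density} before substituting via \refT{Tdw}.
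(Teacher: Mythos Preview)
Your proposal is correct and follows essentially the same approach as the paper: the paper's proof simply says that by \refT{Tdw} the mean of $\nu(\gG_W)$ equals $\E\intoi W(U,y)\dd y=\intoi\intoi W(x,y)\dd x\dd y$, which is exactly your three steps compressed into one line.
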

\begin{proof}
  By \refT{Tdw}, the mean of $\mu(\gG_W)$ equals
  \begin{equation*}
\E \intoi W(U,y)\dd y	
=\intoi\intoi W(x,y)\dd x\dd y.
\qedhere
  \end{equation*}
\end{proof}

\section{Limits of threshold graphs}\label{Slim}

Recall from \refSS{SSthrlim} that $\ctoo$
is the set of  limits of threshold graphs,
and $\ctboo$ is the set 
of limits of \btg{s}.
Our purpose in this
section is to characterize the threshold graph limits, \ie{}
the elements of $\ctoo$ and $\ctboo$, and give
simple criteria for the convergence of a sequence of threshold graphs
to one of these limits.
We begin with some definitions.

A function $W:\oii\to\bbR$ is \emph{increasing} if $W(x,y)\le
W(x',y')$ whenever $0\le x\le x'\le 1$ and $0\le y\le y'\le1$.
A set $S\subseteq\oii$ is increasing if its indicator $\etta_S$ is an
increasing function on $\oii$, \ie, if $(x,y)\in S$ implies $(x',y')\in S$
whenever $0\le x\le x'\le 1$ and $0\le y\le y'\le1$.

If $\mu\in\cP$, let $\fmu$ be its distribution function
$\fmu(x)\=\mu([0,x])$, and let $\fmu(x-)\=\mu([0,x))$ be its
left-continuous version. Thus $\fmu(0-)=0\le\fmu(0)$ and $\fmu(1-)\le
1=\fmu(1)$. Further, let $\fmui:\oi\to\oi$ be the right-continuous
inverse defined by
\begin{equation}\label{fmui}
\fmui(x)\=\sup\set{t\le 1:\fmu(t)\le x}.   
\end{equation}
Note that $\fmui(0)\ge0$
and $\fmui(1)=1$.
Finally, define
\begin{equation}\label{smu1}
  \smu\=\bigset{(x,y)\in\oii:x\ge\fmu\bigpar{(1-y)-}}.
\end{equation}
It is easily seen that $\smu$ is a closed increasing subset of $\oii$
and that it contains the upper and right edges \set{(x,1)} and \set{(1,y)}.
Since $x\ge\fmu\bigpar{(1-y)-} \iff \fmui(x)\ge1-y$, we also have
\begin{equation}\label{smu2}
  \smu
=\bigset{(x,y)\in\oii:\fmui(x)+y\ge1}.
\end{equation}

We further write $\wmu\=\etta_{\smu}$ and let $\ggmub\=\ggb(\wmu)$
and, when $W$ is symmetric, 
$\ggmu\=\gG(\wmu)$. We denote the interior of a set $S$ by $S\inre$.
It is easily verified from \eqref{smu1} that
\begin{equation}\label{smuo}
  \smu\inre
=\bigset{(x,y)\in(0,1)^2:x>\fmu(1-y)}.
\end{equation}

Recall that the \emph{Hausdorff distance} between two non-empty compact
subsets $K_1$ and $K_2$ of some metric space $\cS$ 
is defined by 
\begin{equation}\label{hausdorff}
  \begin{split}
d_H(K_1,K_2)
&\=
\max\bigpar{\max_{x\in K_1}d(x,K_2),\,\max_{y\in K_2} d(y,K_1)}. 
  \end{split}
\end{equation}
This defines a metric on the set of all non-empty compact subsets of $\cS$. If
$\cS$ is compact, the resulting topology on the set of compact subsets
of $\cS$ (with the empty set as an isolated point) is compact and
equals the \emph{Fell topology} 
(see \eg{} \cite[Appendix A.2]{Kallenberg})
on the set of all closed subsets of $\cS$.

Let $\gl_d$ denote the Lebesgue measure in $\bbR^d$.
For measurable subsets $S_1,S_2$ of $\oii$, we also consider their 
\emph{measure distance} $\gl_2(S_1\gD S_2)$.
This equals the $L^1$-distance of their indicator functions, and is
thus a metric modulo null sets. 

For functions in $\cW$ we also use two different metrics: the
\emph{$L^1$-distance} $\int_{\oii}|W_1(x,y)-W_2(x,y)|\dd x\dd y$ 
and, in the symmetric case, the \emph{cut-distance} $\gdc$ defined by
\Borgsetal{} \cite{BCL1}, and in the bipartite case its analogue
$\gdcb$, see \refS{Sprel}.
Note that the cut-distance is only a pseudo-metric, since the distance
of two different functions may be 0. Note further that the
cut-distance is less than or equal to the $L^1$-distance.

We can now prove one of our main results, giving several  related 
characterizations of threshold graph limits. There are two versions, 
since we treat the bipartite case in parallel. 

\subsection*{The bipartite case}
It is convenient to begin with the bipartite case. 

\begin{theorem}
  \label{Tboo}
There are
bijections between the set $\ctboo$ of graph limits of
\btg{s} and each of the following sets. 
\begin{romenumerate}
  \item\label{TBoop}
The set $\cP$ of probability distributions on $\oi$.
\item\label{TBooc}
The set $\cbc$ of increasing closed sets $S\subseteq\oii$ that contain the
upper and right edges $\oi\times\set1\cup\set1\times\oi$.
\item\label{TBooo}
The set $\cbo$ of increasing open sets $S\subseteq(0,1)^2$.
\item\label{TBoow}
The set $\cbw$ of increasing \oix{} valued functions $W:\oii\to\set{0,1}$
modulo \aex{} equality.
\end{romenumerate}
More precisely, there are commuting bijections between these sets
given by the following mappings and their compositions:
\begin{equation}\label{mapb}
  \begin{aligned}
\ixbp&:\ctboo\to\cP,
&
\ixbp(\gG)&\=\nu_1(\gG)	;
\\
\ixpc&:\cP\to\cbc,
& \ixpc(\mu)&\=\smu; 
\\
\ixco&:\cbc\to\cbo,
&\ixco(S)&\=S\inre;
\\
\ixcw&:\cbc\to\cbw,
&\ixcw(S)&\=\etta_S;
\\
\ixow&:\cbo\to\cbw,
&\ixow(S)&\=\etta_S;
\\
\ixwb&:\cbw\to\ctboo,
&\ixwb(W)&\=\ggb_W.
  \end{aligned}
\end{equation}

\begin{diagram}
\ctboo &\rTo^{\ixbp}& \cP &\rTo^{\ixpc}& \cbc \\
 &  \luTo^{\ixwb} & & \ldTo_{\ixcw} & \dTo_{\ixco}\\
  & & \cbw& \lTo_{\ixow} &\cbo\\
\end{diagram}
In particular,
a probability distribution $\mu\in\cP$
corresponds to $\ggmub\in\ctboo$ and to
$\smu\in\cbc$,
$\smu\inre\in\cbo$, and
$\wmu\in\cbw$.
Conversely, $\gG\in\ctboo$ corresponds to $\nu_1(\gG)\in\cP$.
Thus, the mappings $\Gamma\mapsto\nu_1(\gG)$ and $\mu\mapsto\ggmub$
are the inverses of each other.

Moreover, these bijections are homeomorphisms,
with any of the following topologies or metrics:
the standard (weak) topology on $\cP$; the 
Hausdorff metric, 
or the Fell topology,
or the measure distance
on $\cbc$; 
the measure distance
on $\cbo$; 
the $L^1$-distance or the cut-distance 
on the set $\cbw$. 
\end{theorem}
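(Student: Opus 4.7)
\emph{Plan.} I will set up measure-theoretic bijections among $\cP$, $\cbc$, $\cbo$, $\cbw$, show that $\ixwb(\wmu)\in\ctboo$ and that $\nu_1(\ixwb(\wmu))=\mu$, prove surjectivity onto $\ctboo$ by approximating \btg{s} with sorted step functions, and finally use compactness of $\cP$ to upgrade to a homeomorphism. A set $S\in\cbc$ is exactly the region $\{(x,y):x\ge h(y)\}$ above a decreasing right-continuous function $h\colon\oi\to\oi$ with $h(1)=0$, and such $h$ are in bijection with $\cP$ via $h(y)=\fmu((1-y)-)$; this gives $\ixpc$. The difference $\smu\setminus\smu\inre$ is the graph of $h$ together with the upper and right edges, hence has Lebesgue measure zero, so $\etta_{\smu}=\etta_{\smu\inre}$ a.e.; this identifies $\ixcw$ with $\ixow\circ\ixco$, gives the commuting diagram, and shows $\ixco$ is a bijection (two increasing open subsets of $(0,1)^2$ that differ by a null set must coincide). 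Injectivity of $\ixwb\circ\ixcw\circ\ixpc$ reduces to $\wmu\ne W_{\mu'}$ on a positive-measure set whenever $\mu\ne\mu'$.

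\emph{Landing in $\ctboo$ and computing $\nu_1$.} The random graph $\gwnn$ for $W=\wmu$ almost surely contains no induced $2K_2$: such a copy would require $(X_i,Y_k),(X_j,Y_l)\in\smu$ while $(X_i,Y_l),(X_j,Y_k)\notin\smu$, but the increasing property of $\smu$ forces at least one of the cross-points into $\smu$, a contradiction. By \refT{TUF} (bipartite case), $\ixwb(\wmu)=\ggb_{\wmu}\in\ctboo$. Next, \refT{Tdw} together with \eqref{smu2} identifies $\nu_1(\ggb_{\wmu})$ with the law of $\intoi\wmu(U,y)\dd y=\gl_1\{y:\fmui(U)\ge 1-y\}=\fmui(U)$, which is distributed as $\mu$ by the standard quantile identity. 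Hence $\ixbp\circ\ixwb\circ\ixcw\circ\ixpc=\mathrm{id}_\cP$.

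\emph{Surjectivity -- the main obstacle.} Given $\gG\in\ctboo$, pick \btg{s} $G_n\to\gG$ and sort each $V_j(G_n)$ by increasing degree; the key lemma is that the resulting step function $W^*_{G_n}$ on $\oii$ equals $W_{\mu_n}$ almost everywhere, where $\mu_n\=\nu_1(G_n)$. The sorted adjacency of a \btg{} is a staircase whose row heights are determined by the $V_2$-degree sequence, while the height of $\{W_{\mu_n}=1\}$ at level $y$ is determined by the complementary CDF of $\nu_1(G_n)$ at $1-y$; these coincide because, for a \btg{}, the two degree sequences are conjugate partitions after normalization -- a finite combinatorial check, but the only real obstacle. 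Granted this, \refT{Tdegree} yields $\mu_n\to\mu\=\nu_1(\gG)$ weakly, and bounded convergence (using $F_{\mu_n}(t)\to\fmu(t)$ at continuity points of $\fmu$) gives $W_{\mu_n}\to\wmu$ in $L^1$ and hence in the cut-distance $\gdcb$; continuity of $W\mapsto\ggb_W$ for $\gdcb$ (recalled in \refS{Sprel}) then gives $\pi(G_n)=\ggb_{W^*_{G_n}}=\ggb_{W_{\mu_n}}\to\ggb_{\wmu}$. Thus $\gG=\ixwb(\wmu)$, proving both surjectivity and the fact that $\ixbp$ inverts $\ixwb\circ\ixcw\circ\ixpc$.

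\emph{Homeomorphism.} The preceding arguments show that $\mu\mapsto\smu$ is continuous from $\cP$ (weak topology) into $\cbc$ with Hausdorff (hence Fell) metric and with measure distance, $\mu\mapsto\wmu$ is continuous into $\cbw$ with $L^1$ and cut-distance, and $\mu\mapsto\ixwb(\wmu)$ is continuous into $\ctboo$. Since $\cP$ is compact and all targets are Hausdorff, every continuous bijection is automatically a homeomorphism; this simultaneously yields the equivalence of the several listed metrics on each target space.
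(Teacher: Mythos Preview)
Your argument is correct and reaches the same conclusion as the paper's, but the two proofs are organized differently at the crucial step of identifying $\ixwb(\cbw)$ with $\ctboo$.

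For the inclusion $\ixwb(\cbw)\subseteq\ctboo$ you invoke \refT{TUF} via the observation that $\gwnn$ for $W=\wmu$ can never contain an induced $2K_2$; the paper instead approximates any $S\in\cbc$ by unions of dyadic squares $S_n$, reads off a \btg{} $G_n$ from each $S_n$, and uses $d_H(S_n,S)\to0$ to conclude $G_n\to\ggb(\etta_S)$. For the reverse inclusion you prove the explicit ``conjugate partition'' lemma $W^*_{G_n}=W_{\nu_1(G_n)}$ a.e.\ and then pass to the limit by showing $W_{\mu_n}\to W_\mu$ in $L^1$ via dominated convergence; the paper bypasses this by first establishing that $\cbz\=\ixwb(\cbw)$ is homeomorphic to $\cP$, hence compact, hence closed in $\cboo$, and then simply uses that $\pi(G_n)\in\cbz$ and $\pi(G_n)\to\gG$ to conclude $\gG\in\cbz$. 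Your route is more constructive and makes visible exactly which $W$ represents a given $\gG$; the paper's compactness argument is shorter and avoids the combinatorial check you flag as the main obstacle. On continuity, the paper gives quantitative Lipschitz bounds $d_H(S_{\mu_1},S_{\mu_2})\le\sqrt2\,d_L(\mu_1,\mu_2)$ and $\gl_2(S_{\mu_1}\gD S_{\mu_2})\le 2\,d_L(\mu_1,\mu_2)$ in terms of the \Levy{} metric, whereas you rely on pointwise convergence plus dominated convergence; both are valid, but you should be aware that your sketch does not actually argue Hausdorff continuity (only $L^1$), so that claim in your final paragraph needs a separate short argument along the paper's lines.
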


\begin{proof}

The mappings in \eqref{mapb} are all well-defined, except
that we do not yet know that $\ixwb$ maps $\cbw$
into $\ctboo$.
We thus regard $\ixwb$ as a map $\cbw\to\cboo$
and let $\cbz\=\ixwb(\cbw)$ be its image; we 
will identify this as $\ctboo$ later.
For the time being we also regard $\ixbp$ as defined on
$\cbz$ (or on all of $\cboo$).

Consider first 
$\ixpc:\cP\to\cbc$.
By \eqref{smu1}, 
$\smu$ determines $\fmu$ at all continuity points,
and thus it determines $\mu$. Consequently, $\ixpc$
is injective.

If $S\in\cbc$ and $y\in\oi$, then \set{x:(x,y)\in S} is a closed subinterval
of $\oi$ that contains $1$, and thus $S=\set{(x,y)\in\oii:x\ge g(y)}$
for some function $g:\oi\to\oi$. Moreover, 
$g(1)=0$, $g$ is decreasing, \ie{}  $g(y_2)\le g(y_1)$ if $y_1\le y_2$, 
and, since $S$ is closed,
$g$ is right-continuous. Thus $g(1-x)$ is increasing and left-continuous, and
hence there exists a probability measure $\mu\in\cP$
such that $\fmu(x-)=g(1-x)$, $x\in\oi$. By \eqref{smu1}, then 
\begin{equation*}
\ixpc(\mu)=\smu=\set{(x,y)\in\oii:x\ge g(y)}=S.  
\end{equation*}
Hence $\ixpc$ is
onto. 
Consequently, $\ixpc$ is a bijection of $\cP$ onto $\cbc$.

If $S_1$ and $S_2$ are two different sets in $\cbc$, then there exists
a point $(x,y)\in S_1\setminus S_2$, say. There is a small open
disc with center in $(x,y)$ that does not intersect $S_2$, and since
$S_1$ is increasing, at least a quarter of the disc is contained in
$S_1\setminus S_2$. Hence, $\gl_2(S_1\gD S_2)>0$. 
Similarly, 
if $S_1$ and $S_2$ are two different sets in $\cbo$
and $(x,y)\in S_1\setminus S_2$,  then there is a small open
disc with center in $(x,y)$ that is contained in $S_1$, and since
$S_2$ is increasing, at least a quarter of the disc is contained in
$S_1\setminus S_2$, whence $\gl_2(S_1\gD S_2)>0$. 
This shows that the
measure distance is a metric on $\cbc$ and on $\cbo$, and that the
mappings $\ixcw$ and $\ixow$ into $\cbw$ are injective (remember that
\aex{} equal functions are identified in $\cbw$).

Next, let $S\subseteq\oii$ be increasing.  If $(x,y)\in\bS$ with
$x<1$ and $y<1$, it is easily seen that
$(x,x+\gd)\times(y,y+\gd)\subseteq S$ for $\gd=\min\set{1-x,1-y}$, and
thus
$(x,x+\gd)\times(y,y+\gd)\subseteq S\inre$.
It follows that, for any real $a$, the intersection of the boundary
$\partial S\=\bS\setminus S\inre$ with the diagonal  line
$L_a\=\set{(x,x+a):x\in\bbR}$ consists of at most two points (of which
one is on the boundary of $\oii$). In particular, 
$\gl_1(\partial S\cap L_a)=0$ and thus
\begin{equation}\label{ds}
 \gl_2(\partial S)
=
2\qqw \int_{-1}^1\gl_1(\partial S\cap L_a)\dd a
=0.
\end{equation}
Consequently, $\partial S$ is a null set for every increasing $S$.
Among other things, this shows that if $S\in\cbc$, then
$\ixow\ixco(S)=\etta_{S\inre}=\etta_S$ a.e.
Since elements of $\cbw$ are defined modulo \aex{} equality, this
shows that $\ixow\ixco=\ixcw:\cbc\to\cbw$.

If $W\in\cbw$, and thus $W=\etta_S$ for some
increasing $S\subseteq\oii$, let 
\begin{equation}
  \label{ts}
\tS\=\overline{S\cup \oi\times\set1\cup\set1\times\oi}.
\end{equation}
Then $\tS\in\cbc$ and \eqref{ds} implies that 
$\ixcw(\tS)=\etta_{\tS}=\etta_S=W$ a.e.
Similarly, $S\inre\in\cbo$ and
$\ixow(S\inre)=\etta_S=W$ a.e.
Consequently, $\ixcw$ and $\ixow$ are onto, and thus bijections.
Similarly (or as a consequence), $\ixco$ is a bijection of $\cbc$ onto
$\cbo$, with inverse $S\mapsto\tS$ given by \eqref{ts}.

Note that the composition $\ixcw\ixpc$ maps
$\mu\mapsto\etta_{\smu}=\wmu$, and 
let $\ixpb$ be the composition $\ixwb\ixcw\ixpc:\mu\mapsto\ggb(\wmu)=\ggmub$
mapping $\cP$ into $\cboo$. 
Since $\ixpc$ and $\ixcw$ are bijections, its image
$\ixbp(\cP)=\ixwb(\cbw)=\cbz\subseteq\cboo$. 

If $\mu\in\cP$, then the composition
$\ixbp\ixpb(\mu)=\nu_1(\ggmub)$ equals by 
\refT{Tdw} and \eqref{smu2} the distribution of 
\begin{equation}
  \intoi\etta_{\smu}(U,y)\dd y = \fmui(U).
\end{equation}
As is well-known, and easy to verify using \eqref{fmui}, this
distribution equals $\mu$. 
Hence, the composition $\ixbp\ixpb$ is the identity.
It follows that $\ixpb$ is injective and thus a
bijection of $\cP$ onto its image $\cbz$, and that $\ixbp$ (restricted to
$\cbz$)
is its inverse.

We have shown that all mappings in \eqref{mapb} are bijections, except
that we have not yet shown that $\cbz=\ctboo$. We next show that the mappings
are homeomorphisms.

Recall that the topology on $\cP$ can be defined by the \Levy{} metric
defined by (see \eg{} \cite[Problem 5.25]{Gut})
\begin{equation}\label{levy}
  d_L(\mu_1,\mu_2)\=\inf\set{\eps>0:
F_{\mu_1}(x-\eps)-\eps \le F_{\mu_2}(x)\le F_{\mu_1}(x+\eps)+\eps
\text{ for all $x$}}.
\end{equation}
If $\mu_1,\mu_2\in\cP$ with $d_L(\mu_1,\mu_2)<\eps$, 
it follows from \eqref{smu1} and \eqref{levy}
that if $(x,y)\in S_{\mu_1}$ and $x,y<1-\eps$,
then
\begin{equation*}
  F_{\mu_2}\bigpar{(1-y-\eps)-}
\le
  F_{\mu_1}\bigpar{(1-y)-}+\eps
\le x+\eps
\end{equation*}
and thus $(x+\eps,y+\eps)\in S_{\mu_2}$. Considering also the simple cases 
$x\in[1-\eps,1]$ and $y\in[1-\eps,1]$, it follows that if
$(x,y)\in S_{\mu_1}$, then $d\bigpar{(x,y),S_{\mu_2}}\le\sqrt2\,\eps$.
Consequently, by \eqref{hausdorff} and symmetry,
\begin{equation*}
  d_H(S_{\mu_1},S_{\mu_2})\le\sqrt2\, d_L(\mu_1,\mu_2),
\end{equation*}
which shows that $\ixpc$ is continuous if $\cbc$ is given the topology
given by the Hausdorff metric.

The same argument shows that for any $(x_0,y_0)$, the intersection of
the difference $S_{\mu_1}\gD S_{\mu_2}$ with the diagonal line $L_a$
defined above is an interval of length at most 
$\sqrt2\,d_L(\mu_1,\mu_2)$, and thus, by integration over $a$ as in
\eqref{ds}, 
\begin{equation*}
 \gl_2\bigpar{S_{\mu_1}\gD S_{\mu_2}}\le 2d_L(\mu_1,\mu_2). 
\end{equation*}
Hence, $\ixpc$ is continuous also if $\cbc$ is given the topology
given by the measure distance.

Since $\cP$ is compact and $\ixpc$ is a bijection, it follows that
$\ixpc$ is a homeomorphism for both these topologies on $\cbc$. In
particular, these topologies coincide on $\cbc$. As remarked before
the theorem, since $\oii$ is compact, also the Fell topology coincide
with these on $\cbc$.

The bijections $\ixco$, $\ixcw$ and $\ixow$ are isometries for the
measure distance on $\cbc$ and $\cbo$ and the $L^1$-distance on
$\cbw$, and thus homeomorphisms. Furthermore, 
still using 
the $L^1$-distance on $\cbw$,
it is easily seen from 
\eqref{tuww}, as for the symmetric case in \cite{LSz}, \cite{BCL1},
that for every fixed bipartite graph $F$, the mapping
$W\mapsto t(F,\ggb_W)$ is continuous, which by definition of the
topology in $\cboo$ means that $\ixwb:W\mapsto\ggb_W$ is continuous.
Hence, the bijection $\ixwb$ is a homeomorphism of the compact space
$\cbw$ onto its image $\cbz$.

As said above,
the cut-distance is only a pseudo-metric on $\cW$. But two functions
in $\cW$ with cut-distance 0 are mapped onto the same element in
$\cboo$, and since we have shown that $\ixwb$ is injective on $\cbw$,
it follows that the restriction of the cut-distance to $\cbw$ is a
metric. Moreover, the identity map on $\cbw$ is continuous from the
$L^1$-metric to the cut-metric, and since the space is compact under
the former metric, the two metrics are equivalent on $\cbw$.

We have shown that all mappings are homeomorphisms. It remains only to
show that $\cbz=\ctboo$.
To do this, observe first that if $G$ is a \btg, and we
order its vertices in each of the two vertex sets with increasing
vertex degrees, 
then the function $W_G$ defined in \refS{Sprel} is increasing and
belongs thus to $\cbw$. Consequently, $\pib(G)=\ixwb(W_G)\in\cbz$.
If $\gG\in\ctboo$, then by definition there exists a sequence $G_n$ of
\btg{s} with $v_1(G_n),v_2(G_n)\to\infty$ such that $G_n\to\gG$ in
$\cbq$. This implies that $\pib(G_n)\to\gG$ in $\cboo$, and since
$\pib(G_n)\in\cbz$ and $\cbz$ is compact and thus a closed subset of
$\cboo$, we find $\gG\in\cbz$.

Conversely, if $\gG\in\cbz$, then $\gG=\ixwb\ixcw(S)$ for some set
$S\in\cbc$. For each $n$, partition $\oii$ into $n^2$ closed squares $Q_{ij}$
of side $1/n$, and let $S_n$ be the union of all $Q_{ij}$ that
intersect $S$. Then $S_n\in\cbc$, $S\subseteq S_n$ and $d_H(S_n,S)\le
\sqrt2/n$. Let $W_n\=\etta_{S_n}=\ixcw(S_n)$ and let
$\gG_n\=\ixwb(W_n)\in\cbz$. Since $\ixcw$ and $\ixwb$ are 
continuous, $W_n\to W\=\etta_S$ in $\cbw$ and $\gG_n\to\ixwb(W)=\gG$ in
$\cbz\subset\cboo$. However, $W_n$ is a step function of the form
$W(G_n)$ for some bipartite graph $G_n$ with $v_1(G_n)=v_2(G_n)=n$,
and thus $\pib(G_n)=\ggb_{W_n}=\gG_n$. 
Moreover, each $S_n$ and thus each $W_n$ is increasing, and hence
$G_n$ is a \btg.
Since $\pib(G_n)=\gG_n\to\gG$ in
$\cboo$, it follows that $G_n\to\gG$ in $\cbq$, and thus $\gG\in\ctboo$.

Consequently, $\cbz=\ctboo$, which completes the proof.
\end{proof}

\begin{remark}
  Another unique representation by increasing closed sets is given by
  the family $\cbc'$ of closed increasing subsets $S$ of $\oii$ that
  satisfy $S=\overline{S\inre}$; there are bijections
  $\cbc'\to\cbo$ and $\cbo\to\cbc'$ given
  by $S\mapsto S\inre$ and $S\mapsto \overline S$. 
We can, again, use the measure
  distance on $\cbc'$, but not the Hausdorff distance.
(For example, $[0,1]\times[1-\eps,1]\to\emptyset$ in
  $\cbc'$ as $\eps\to0$.)
\end{remark}

\begin{corollary}
  The degree distribution yields a homeomorphism $\Gamma\mapsto\nu_1(\Gamma)$
of $\ctboo$ onto $\cP$.
\end{corollary}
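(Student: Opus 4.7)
The plan is to observe that this corollary is an immediate consequence of \refT{Tboo}. In that theorem, the map $\ixbp:\ctboo\to\cP$ is defined by $\ixbp(\gG)\=\nu_1(\gG)$, and the theorem asserts that $\ixbp$ is one of the commuting bijections in the diagram, and further that all of these bijections are homeomorphisms with respect to the standard weak topology on $\cP$. The corollary thus amounts to reading off that particular arrow.

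Concretely, I would write the proof as a single sentence: this is \refT{Tboo}, specifically the part concerning $\ixbp$. If additional justification is desired, one could unpack it by noting two facts already established in the proof of \refT{Tboo}. First, the composition $\ixbp\circ\ixpb$ is the identity on $\cP$, via \refT{Tdw} and the fact that $\intoi \etta_{\smu}(U,y)\dd y=\fmui(U)\sim\mu$ for $U\sim U(0,1)$; this gives surjectivity of $\ixbp$ onto $\cP$ and provides an explicit inverse $\mu\mapsto\ggmub$. Second, injectivity and continuity of $\ixbp$ follow because $\ixpb=\ixwb\circ\ixcw\circ\ixpc$ is a continuous bijection of $\cP$ onto $\ctboo$ and $\cP$ is compact, so $\ixpb$ is a homeomorphism with continuous inverse $\ixbp$.

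Since there is no real obstacle here --- the corollary is a restatement of part of \refT{Tboo} --- the only thing to be careful about is making clear to the reader that $\ixbp$ and $\Gamma\mapsto\nu_1(\Gamma)$ are literally the same map, and that the compactness of $\cP$ (used to promote a continuous bijection into a homeomorphism) has already been invoked inside the proof of \refT{Tboo}.
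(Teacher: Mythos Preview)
Your proposal is correct and matches the paper's treatment: the corollary is stated without proof immediately after \refT{Tboo}, as it is simply a restatement of the fact that $\ixbp:\gG\mapsto\nu_1(\gG)$ is one of the homeomorphisms in that theorem. Your one-sentence version is exactly what is intended.
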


Of course, 
$\Gamma\mapsto\nu_2(\Gamma)=\nu_1(\Gamma\sss)$
yields another homeomorphism of $\ctboo$ onto $\cP$.
To see the connection between these, and (more importantly) to prepare for the
corresponding result in the non-bipartite case,
we investigate further the reflection involution.

If $S\subseteq\oii$, let $S\sss:=\set{(x,y):(y,x)\in S}$ 
be the set $S$ reflected in the main diagonal. Thus
$\etta_{S\sss}=\etta_S\sss$. We have defined the reflection map $\sssx$
for bipartite graphs and graph limits, and for the sets and functions 
in \refT{Tboo}\ref{TBooc}\ref{TBooo}\ref{TBoow}, and it is easily seen
that these correspond to each other by the bijections in \refT{Tboo}.
Consequently, there is a corresponding map (involution)
$\mu\mapsto\mu\sss$ of 
$\cP$ onto itself too. This map is less intuitive than the others; to
find it explicitly, we find from \eqref{smu1}, \eqref{smu2} and
$S_{\mu\sss}=\smu\sss$ that
\begin{equation*}
 x\ge \fmusss\bigpar{(1-y)-} \iff
(y,x)\in\smu\iff
\fmui(y)+x\ge1 
\end{equation*}
and thus
$\fmusss\bigpar{(1-y)-}=1-\fmui(y)$
and
\begin{equation}\label{musss}
 \fmusss(t)=1-\fmui\bigpar{(1-t)-},
\qquad
0\le t\le1.
\end{equation}
This means that the graph of the  distribution function is reflected
about the diagonal between $(0,1)$ and $(1,0)$ (and adjusted at the jumps).

The map $\sssx$ is continuous on $\cP$, by \refT{Tboo} and the obvious
fact that $S\mapsto S\sss$ is continuous on, for example, $\cbc$.

We let $\cps\=\set{\mu\in\cP:\mu\sss=\mu}=\set{\mu\in\cP:\smu=\smu\sss}$ 
be the set of probability
distributions invariant under the involution $\sssx$.
Since $\sssx$ is continuous, $\cps$ is a closed and thus compact subset
of $\cP$.

\begin{remark}
If $\mu\in\cps$, let
$x_0\=1-\inf\set{x:(x,x)\in\smu}$.
Then \eqref{smu1} and \eqref{smuo} imply that
$\fmu(x_0-)\le1-x_0\le\fmu(x_0)$, and the restriction of $\fmu$ to
$[0,x_0)$ is an increasing right-continuous
  function with values in $[0,1-x_0]$ and this restriction
  determines $\fmu(t)$ for $x\ge x_0$ too by
  \eqref{musss}.

Conversely, given any $x_0\in\oi$ and increasing
right-continuous $F:[0,x_0)\to[0,1-x_0]$, there is a unique
  $\mu\in\cps$ with $\fmu(x)=F(x)$ for
  $x<x_0$ and $\fmu(x_0)\ge 1-x_0$.
  \end{remark}

\subsection*{Non-bipartite case}
We can now state our main theorem for (non-bipartite) threshold graph limits.

\begin{theorem}
  \label{Too}
There are bijections between the set $\ctoo$ of graph limits of
\tg{s} and each of the following sets. 
\begin{romenumerate}
  \item\label{Toop}
The set $\cps$ of probability distributions on $\oi$ symmetric with
respect to $\sssx$.
\item\label{Tooc}
The set $\ctc$ of symmetric increasing closed sets $S\subseteq\oii$
that contain the 
upper and right edges $\oi\times\set1\cup\set1\times\oi$.
\item\label{Tooo}
The set $\cto$ of symmetric increasing open sets $S\subseteq(0,1)^2$.
\item\label{Toow}
The set $\ctw$ of symmetric increasing \oix{} valued functions
$W:\oii\to\set{0,1}$ modulo \aex{} equality.
\end{romenumerate}
More precisely, there are commuting bijections between these sets
given by the following mappings and their compositions:
\begin{equation}\label{map}
  \begin{aligned}
\ixtp&:\ctoo\to\cps,
&
\ixtp(\gG)&\=\nu(\gG)	;
\\
\ixpc&:\cps\to\ctc,
& \ixpc(\mu)&\=\smu; 
\\
\ixco&:\ctc\to\cto,
&\ixco(S)&\=S\inre;
\\
\ixcw&:\ctc\to\ctw,
&\ixcw(S)&\=\etta_S;
\\
\ixow&:\cto\to\ctw,
&\ixow(S)&\=\etta_S;
\\
\ixwt&:\ctw\to\ctoo,
&\ixwt(W)&\=\gG_W.
  \end{aligned}
\end{equation}
In particular,
a probability distribution $\mu\in\cps$
corresponds to $\ggmu\in\ctoo$ and to
$\smu\in\ctc$,
$\smu\inre\in\cto$, and
$\wmu\in\ctw$.
Conversely, $\gG\in\ctoo$ corresponds to $\nu(\gG)\in\cps$.
Thus, the mappings $\Gamma\mapsto\nu(\gG)$ and $\mu\mapsto\ggmu$
are the inverses of each other.

\begin{diagram}
\ctoo &\rTo^{\ixtp}& \cps &\rTo^{\ixpc}& \ctc \\
 &  \luTo^{\ixwt} & & \ldTo_{\ixcw} & \dTo_{\ixco}\\
  & & \ctw& \lTo_{\ixow} &\cto\\
\end{diagram}

Moreover, these bijections are homeomorphisms,
with any of the following topologies or metrics:
the standard (weak) topology on $\cps\subset\cP$; the 
Hausdorff metric, 
or the Fell topology,
or the measure distance
on $\ctc$; 
the measure distance
on $\cto$; 
the $L^1$-distance or the cut-distance 
on the set $\ctw$.
These homeomorphic topological spaces are compact metric spaces.
\end{theorem}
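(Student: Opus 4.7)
The plan is to mirror the proof of \refT{Tboo} by restricting the bipartite bijections to their $\sssx$-symmetric subsets and constructing the non-bipartite map $\ixwt$ directly. First I would check that $\ixpc$, $\ixco$, $\ixcw$, $\ixow$ all commute with the involution $\sssx$: for $\ixpc$, formula \eqref{musss} implies $S_{\mu\sss}=\smu\sss$, so $\mu\in\cps$ iff $\smu$ is symmetric; for the other three, equivariance is immediate since the interior, closure, and indicator of a symmetric set are symmetric. Hence the bipartite bijections from \refT{Tboo} restrict to bijections between $\cps$, $\ctc$, $\cto$, $\ctw$, and these restrictions remain homeomorphisms in each of the listed metrics because they are restrictions of the corresponding bipartite homeomorphisms to closed subsets of compact spaces.

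Next I would treat $\ixwt:\ctw\to\ctoo$. For well-definedness, fix $W=\etta_S\in\ctw$ with $S$ symmetric, closed, and increasing. By \refT{TaltC4} and \refT{TUF} applied with $\cF=\{P_4,C_4,2K_2\}$, it suffices to show that $\gwn$ is \as{} a \tg. Since $i$--$j$ is an edge of $\gwn$ iff $(X_i,X_j)\in S$, monotonicity and symmetry of $S$ directly rule out any induced $2K_2$, $P_4$, or $C_4$: for instance, if $x_a<x_b<x_c<x_d$ are four sampled values with $(x_a,x_b)\in S$, then $(x_b,x_c),(x_c,x_d)\in S$ by increasingness, preventing induced $2K_2$. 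So $\ixwt(\ctw)\subseteq\ctoo$. For surjectivity, given $\gG\in\ctoo$, take \tg{s} $G_n$ with $\pi(G_n)\to\gG$, order the vertices of each $G_n$ by degree so that $W_{G_n}\in\ctw$, and use compactness of $\ctw$ in the $L^1$-metric to extract a subsequential limit $W\in\ctw$; continuity of $\ixwt$ then forces $\gG_W=\gG$. Injectivity follows from $\ixtp\circ\ixwt\circ\ixcw\circ\ixpc=\mathrm{id}_{\cps}$: by \refT{Tdw} the degree distribution $\nu(\ggmu)$ equals the law of $\int_0^1\etta_{\smu}(U,y)\dd y=\fmui(U)$ via \eqref{smu2}, which is $\mu$ itself.

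The main obstacle will be the surjectivity step, specifically verifying that the $L^1$ limit of the step functions $W_{G_n}$ genuinely lies in $\ctw$. Symmetry and increasingness are preserved under $L^1$ convergence after replacing the limit by a canonical \aex{} representative (for example, the indicator of the closure of its positivity set), and the $\{0,1\}$-valued property persists because increasing indicators form a closed subset of $\cbw$ in $L^1$. Crucially, the equivalence of cut-distance and $L^1$-distance on $\cbw$ established in the proof of \refT{Tboo} is what allows us to upgrade $\pi(G_n)\to\gG$, which is only cut-convergence, to $L^1$-convergence of representatives. Once all bijections and continuity are in hand, the homeomorphism claims follow by the standard fact that a continuous bijection between compact Hausdorff spaces is a homeomorphism, using that $\ctoo$ is closed in $\cuq$ and $\cps$ is closed in $\cP$.
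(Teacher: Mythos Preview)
Your overall architecture matches the paper's: restrict the bipartite bijections of \refT{Tboo} to their $\sssx$-fixed points, then handle $\ixwt$ separately. Your use of \refT{TUF} to show $\ixwt(\ctw)\subseteq\ctoo$ by checking directly that $\gwn$ has no alternating 4-cycle is correct and slightly more explicit than what the paper does (the paper instead defines $\ctz:=\ixwt(\ctw)$ and only later identifies $\ctz$ with $\ctoo$).

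There is, however, a genuine gap in your surjectivity step. You write ``order the vertices of each $G_n$ by degree so that $W_{G_n}\in\ctw$'' and then invoke compactness of $\ctw$. But $W_{G_n}\notin\ctw$: the graphs are loopless, so $W_{G_n}$ vanishes on every diagonal square $I_i\times I_i$, and therefore $W_{G_n}$ is \emph{not} increasing on $\oii$. Concretely, if the top vertex is dominating, the last row of $W_{G_n}$ reads $1,1,\dots,1,0$, which violates monotonicity. The paper flags exactly this point as a ``minor complication'' and fixes it by defining a modified $W_G^*\in\ctw$ that agrees with $W_G$ off the diagonal squares, with $\|W_G-W_G^*\|_{L^1}\le1/v(G)$; one then works with $\pi^*(G_n):=\gG(W_{G_n}^*)\in\ctz$ and observes that $t(F,\pi^*(G_n))-t(F,G_n)\to0$, so $G_n\to\gG$ iff $\pi^*(G_n)\to\gG$. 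Your compactness argument is then salvageable applied to the $W_{G_n}^*$ rather than to $W_{G_n}$, but as written the claim $W_{G_n}\in\ctw$ is false and the argument does not go through. The converse inclusion $\ctz\subseteq\ctoo$ in the paper likewise requires zeroing out the diagonal of the approximating step functions so that they correspond to genuine (loopless) \tg{s}.
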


\begin{proof}
  The mappings $\ixpc$, $\ixco$, $\ixcw$, $\ixow$ are restrictions of
  the corresponding mappings in \refT{Tboo}, and it follows from
  \refT{Tboo} and the definitions that these mappings are bijections
  and homeomorphisms for the given topologies. The spaces are closed
  subspaces of the corresponding spaces in \refT{Tboo}, since $\sssx$
  is continuous on these spaces, and thus compact metric spaces.

The rest is as in the proof of \refT{Tboo}, and we omit some details.
It follows from \refT{Tdw} that
the composition $\ixwb\ixcw\ixpc:\mu\mapsto\gG(\wmu)=\ggmu$
is a bijection of $\cps$ onto a subset $\ctz$ of $\ctboo$,
with $\ixtp$ as its inverse. It follows that these mappings too are
homeomorphisms, and that the $L^1$-distance and cut-distance are
equivalent on $\ctw$.

To see that $\ctz=\ctoo$, we also follow the proof of \refT{Tboo}. A
minor complication is that if $G\in\cT$ is a threshold graph, and we
order the vertices with increasing degrees, then $W_G$ is not
increasing, because $W_G(x,x)=0$ for all $x$ since we consider
loopless graphs only. However, we can define $\wgx$ by changing $W_G$
to be 1 on some squares on the diagonal so that $\wgx$ is increasing
and thus $\wgx\in\ctw$, and the error $\|W_G-\wgx\|_{L^1}\le 1/v(G)$.
If we define $\pix(G)\=\gG(\wgx)\in\ctz$, we see that 
if $(G_n)$ is a sequence of threshold graphs with $v(G_n)\to\infty$, then
for every graph $F$, by a simple estimate, see \eg{} \cite[Lemma 4.1]{LSz},
\begin{equation}
| t(F,\pix(G_n))-t(F,G_n) |
\le e(F) \|W(G_n)-\wgnx\|_{L^1}\le e(F)/v(G_n)\to0.
\end{equation}
It follows that $G_n\to\gG$ in $\cuq$ if and only if $\pix(G_n)\to\gG$
in $\cuoo$. 
If $\gG\in\ctoo$, then there exists such a sequence $G_n\to \gG$, 
and thus $\pix(G_n)\to\gG$ in $\cuoo$, and since
$\pix(G_n)\in\ctz$ and $\ctz$ is compact, we find $\gG\in\ctz$.

The converse follows in the same way. If 
$\gG\in\ctz$, then $\gG=\ixwt(W)$ for some function
$W\in\ctw$. The approximating step functions $W_n$ constructed in the
proof of \refT{Tboo} are symmetric, and if we let $W_n\yy$ by the
modification that vanishes on all diagonal squares, $W_n\yy=W_{G_n}$
for some \tg{} $G_n$, and for every graph $F$,
\begin{equation*}
  t(F,G_n)
=t(F,W_n\yy)
=t(F,W_n)+o(1)
=t(F,W)+o(1).
\end{equation*}
Hence, $G_n\to\gG_W=\gG$ in $\cuq$, and thus $\gG\in\ctoo$.
Consequently, $\ctz=\ctoo$.
\end{proof}

\begin{corollary}
  The degree distribution yields a homeomorphism $\Gamma\mapsto\nu(\Gamma)$
of $\ctoo$ onto the closed subspace $\cps$ of $\cP$.
\end{corollary}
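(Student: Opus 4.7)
The plan is short: this corollary is essentially a repackaging of the portion of \refT{Too} that concerns the map $\ixtp$. So I would write a one-paragraph proof that simply invokes the relevant pieces of that theorem and adds the (already-noted) observation that $\cps$ is closed in $\cP$.

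In more detail, \refT{Too} asserts that $\ixtp\colon\ctoo\to\cps$, given by $\ixtp(\gG)\=\nu(\gG)$, is one of the bijections in the commuting diagram, and it further asserts that all of these bijections are homeomorphisms when $\cps$ is given the weak topology inherited from $\cP$. Hence $\Gamma\mapsto\nu(\Gamma)$ is automatically a homeomorphism from $\ctoo$ onto $\cps$. The only remaining content is that $\cps$ is a \emph{closed} subspace of $\cP$. But this was observed in the discussion preceding \refT{Too}: the reflection involution $\mu\mapsto\mu\sss$ on $\cP$ is continuous (as follows from the continuity of the analogous involution on $\cbc$ and the homeomorphism $\ixpc$ of \refT{Tboo}), and
\[
\cps=\{\mu\in\cP:\mu\sss=\mu\}
\]
is the equalizer of this continuous map with the identity, hence closed in $\cP$.

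I do not anticipate any real obstacles: the corollary is a direct extraction of one arrow from the commuting diagram of \refT{Too}, combined with the already-stated closedness of $\cps$ in $\cP$. No new computation or approximation argument is required; the whole proof amounts to pointing at the appropriate pieces of \refT{Too} and the discussion of $\sssx$ that precedes it.
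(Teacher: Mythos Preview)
Your proposal is correct and matches the paper's approach exactly: the paper states this corollary without proof immediately after \refT{Too}, treating it as a direct restatement of the fact that $\ixtp$ is a homeomorphism onto $\cps$, and the closedness of $\cps$ in $\cP$ is indeed established in the discussion of the involution $\sssx$ preceding \refT{Too}.
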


\begin{remark}
The fact that a graph limit $\gG$ can be represented by a
function
$W\in\ctw$ if and only if
$\tind(P_4,\gG)=\tind(C_4,\gG)=\tind(2K_2,\gG)=0$,
which by \refT{TaltC4} is equivalent to the bijection
$\ctoo\leftrightarrow \ctw$ in \refT{Too}, is
also proved by 
\Lovasz{} and Szegedy \cite{LSz:forcible}.  
\end{remark}

We have described the possible limits of sequences of \tg{s};
this makes it easy to see when such sequences converge.

\begin{theorem}  \label{Tlim}
  Let $G_n$ be a sequence of \tg{s} such that
  $v(G_n)\to\infty$. 
Then $G_n$ converges in $\cuq$ as \ntoo, if and only if 
the degree distributions $\nu(G_n)$ converge to some distribution
  $\mu$. In this case, $\mu\in\cps$ and $G_n\to\Gamma_\mu$.
\end{theorem}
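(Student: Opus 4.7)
The plan is to derive this directly from the homeomorphism $\ixtp:\ctoo\to\cps$ given by \refT{Too}, together with the continuity of the degree distribution established in \refT{Tdegree} and the compactness of $\cuq$.

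For the forward direction, suppose $G_n\to\gG$ in $\cuq$. Since each $G_n$ is a \tg{} and $v(G_n)\to\infty$, the limit $\gG$ lies in $\ctoo$, because $\ctoo$ is by definition the set of proper limits of sequences of \tg{s} (equivalently, $\ctq$ is closed in $\cuq$). By \refT{Tdegree}, the map $\nu$ is continuous on $\cuq$, so $\nu(G_n)\to\nu(\gG)$. By \refT{Too}, $\nu(\gG)\in\cps$, giving the desired $\mu=\nu(\gG)$.

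For the reverse direction, suppose $\nu(G_n)\to\mu$ in $\cP$. I would argue by a standard compactness/subsequence trick. The space $\cuq$ is compact, so every subsequence of $(G_n)$ has a further subsequence $(G_{n_k})$ converging in $\cuq$ to some $\gG'$. As above, $v(G_{n_k})\to\infty$ forces $\gG'\in\ctoo$, and by continuity of $\nu$ we get $\nu(\gG')=\lim_k\nu(G_{n_k})=\mu$. Since $\ixtp:\ctoo\to\cps$ is a bijection by \refT{Too}, $\gG'$ is uniquely determined by $\mu$, namely $\gG'=\ggmu$; in particular $\mu\in\cps$. Thus every convergent subsequence of $(G_n)$ has the same limit $\ggmu$, and by compactness this forces the full sequence $G_n\to\ggmu$ in $\cuq$.

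There is no serious obstacle here: the theorem is essentially a corollary of the fact that $\nu$ restricted to $\ctoo$ is a homeomorphism onto $\cps$. The only subtle point worth mentioning is the use of compactness of $\cuq$ (rather than of $\ctoo$ alone) in order to extract convergent subsequences before knowing that the limits are threshold graph limits; closedness of $\ctq$ in $\cuq$ then takes care of placing the subsequential limits in $\ctoo$.
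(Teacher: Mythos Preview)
Your proof is correct, but it takes a somewhat different route from the paper's. The paper does not use a compactness/subsequence argument in $\cuq$; instead it introduces (in the proof of \refT{Too}) an auxiliary map $\pix$ sending a threshold graph $G$ to the element $\gG(\wgx)\in\ctoo$, where $\wgx$ is the step function $W_G$ modified on the diagonal to make it increasing. The paper then argues: $G_n\to\gG$ in $\cuq$ iff $\pix(G_n)\to\gG$ in $\ctoo$ (this was established in the proof of \refT{Too} via the $L^1$-estimate $\|W_{G_n}-W^*(G_n)\|_{L^1}\le 1/v(G_n)$); since $\ixtp$ is a homeomorphism, this happens iff $\nu(\pix(G_n))$ converges; and finally $\nu(\pix(G_n))$ differs from $\nu(G_n)$ by $o(1)$ by \refT{Tdw}, so one converges iff the other does.

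Your approach trades this explicit intermediate object $\pix(G_n)$ for the abstract compactness of $\cuq$ together with the continuity of $\nu$ from \refT{Tdegree}. This is slightly cleaner topologically and avoids re-invoking the diagonal correction $\wgx$, at the cost of the usual subsequence extraction. The paper's approach, on the other hand, gives both implications in one stroke without passing to subsequences, and makes more explicit why convergence of $\nu(G_n)$ forces convergence of $G_n$: the graphs are already essentially sitting inside $\ctoo$ via $\pix$, where $\nu$ is a homeomorphism. Either argument is fine.
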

 
\begin{proof}
  As in the proof of \refT{Too}, $G_n\to\gG$ if and only if
  $\pix(G_n)\to\gG$ in $\ctz=\ctoo$, which by \refT{Too} holds if and
  only if $\nu(\pix(G_n))\to\nu(\gG)$. 
By \refT{Tdw}, $\nu(\pix(G_n))$ equals the distribution of 
$\intoi \wgnx(U,y)\dd y$, but this random variable differs 
by at most $1/v(G_n)=o(1)$
from the random variable
  $\intoi W_{G_n}(U,y)\dd y$, which has degree distribution
$\nu(G_n)$. The result follows.
\end{proof}

\begin{theorem}  \label{TBlim}
  Let $G_n$ be a sequence of \btg{s} such that
  $v_1(G_n),v_2(G_n)\to\infty$. 
Then $G_n$ converges in $\cbq$ as \ntoo, if and only if 
the degree distributions $\nu_1(G_n)$ converge to some distribution
  $\mu$. In this case, 
$\nu_2(G_n)\to\mu\sss$ and
$G_n\to\ggmub$.
\end{theorem}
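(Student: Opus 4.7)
The plan is to parallel the proof of \refT{Tlim}, but using \refT{Tboo} and \refT{Tdegree} in the bipartite setting. A pleasant simplification is that, unlike the non-bipartite case, $W_G$ for a \btg{} $G$ (with the vertices in each part reordered by increasing degree) is genuinely increasing, so $W_G\in\cbw$ and $\pib(G)=\ggb(W_G)\in\ctboo$ directly; no diagonal correction is needed.

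First I would observe that since $\pib:\cbq\to\cboo$ is continuous and sends \btg{s} into $\ctboo$, and $\ctboo$ is compact (hence closed) in $\cboo$, a sequence of \btg{s} $(G_n)$ with $v_1(G_n),v_2(G_n)\to\infty$ converges in $\cbq$ if and only if $\pib(G_n)$ converges in $\ctboo$.

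Next, by the homeomorphism $\ixbp:\ctboo\to\cP$ of \refT{Tboo}, the convergence of $\pib(G_n)$ in $\ctboo$ is equivalent to the convergence $\nu_1(\pib(G_n))\to\mu$ in $\cP$, and when it holds, the limit in $\ctboo$ is $\ggb_\mu$. The key compatibility $\nu_1(\pib(G_n))=\nu_1(G_n)$ must be verified: by \refT{Tdw}, $\nu_1(\pib(G_n))$ is the distribution of $\intoi W_{G_n}(U,y)\dd y$, which from the step-function definition of $W_{G_n}$ is exactly the empirical degree distribution $\nu_1(G_n)$. (Alternatively this is a matter of matching moments via \refL{Ldegree} and the identity $t(K_{1,k},\pib(G))=t(K_{1,k},G)$.) This yields both directions of the equivalence and identifies the limit as $\ggb_\mu$.

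It remains to check that $\nu_2(G_n)\to\mu\sss$. Here I would invoke the continuity of $\nu_2$ on $\cbq$ (\refT{Tdegree}), the identity $\nu_2(\gG)=\nu_1(\gG\sss)$ noted there, and the fact (transported through the bijections of \refT{Tboo}, since each bijection commutes with reflection) that $(\ggb_\mu)\sss=\ggb_{\mu\sss}$. Together these give $\nu_2(\ggb_\mu)=\nu_1(\ggb_{\mu\sss})=\mu\sss$, as required. No step is a serious obstacle, since \refT{Tboo} already does the heavy lifting; the only bookkeeping point requiring care is the compatibility $\nu_1(\pib(G_n))=\nu_1(G_n)$ between the graph-theoretic degree distribution and its continuous extension to $\cbq$.
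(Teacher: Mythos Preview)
Your proof is correct and follows essentially the same route as the paper's: reduce to convergence of $\pib(G_n)$ in $\ctboo$, invoke the homeomorphism $\nu_1:\ctboo\to\cP$ from \refT{Tboo}, and use \refT{Tdw} for the compatibility $\nu_1(\pib(G_n))=\nu_1(G_n)$. One small wording point: $\pib$ is defined only on $\cB$, not on all of $\cbq$, so the first step is better phrased via the identity $t(F,\pib(G))=t(F,G)$ (which you allude to later) rather than by appealing to continuity of $\pib$ on $\cbq$; otherwise your argument, including the explicit treatment of $\nu_2(G_n)\to\mu\sss$, is just a more detailed version of the paper's.
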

 
\begin{proof}
$G_n\to\gG$ if and only if
  $\pi(G_n)\to\gG$ in $\cbz=\cboo$, which by \refT{Tboo} holds if and
  only if $\nu_1(\pi(G_n))\to\nu_1(\gG)$. 
It follows from \refT{Tdw} that $\nu_1(\pi(G_n))=\nu_1(G_n)$, and the
  result follows from \refT{Tboo}.
\end{proof}

\begin{remark}\label{Rfinite}
  A \tg{} limit $\gG$ is, by 
\refT{Too}, determined by its degree distribution and
  the fact that it is a \tg{} limit. 
By \refT{TUF} and  \refL{Ldegree}, $\gG$ is
thus determined by
$t(F,\gG)$ for $F$ in the set
\set{P_4,C_4,2K_2,K_{1,1},K_{1,2},\dots}.
\Lovasz{} and Szegedy \cite{LSz:forcible} have shown that
in some special cases, a finite set of $F$ is enough; for
example, the limit defined by the function $W(x,y)=\ett{x+y\ge1}$
(see \refE{Euniform} and Figure \ref{fig:sps2})
is the unique graph limit with 
$t(P_4,\gG)=t(C_4,\gG)=t(2K_2,\gG)=0$,
$t(K_2,\gG)=1/2$,
$t(P_3,\gG)=1/3$.
\end{remark}

\section{Random threshold graphs}\label{Srandom} 

We consider several ways to define random threshold graphs.
We will only consider constructions with a fixed number $n$ of
vertices; in fact, we take the vertex set to be $[n]=\set{1,\dots,n}$,
where $n\ge1$ is a given parameter.
By a \emph{random \tg} we thus mean a random element of
$\ct_n\=\set{G\in\ct:V(G)=[n]}$ for some $n$; we do not imply any
particular construction or distribution unless otherwise stated. 
(We can regard these graphs as either labeled or unlabeled.)

This section treats four classes of examples: a canonical example based
on increasing sets, random weights examples, random attachment examples
and uniform random threshold graphs.

\subsection{Increasing set}\label{SSset}
  For any symmetric increasing $S\subseteq\oii$, we let $W=\etta_S$ and
  define $\tsn\=\gwn$ as in \refS{Sprel}.
In other words, we take \iid{} random variables $U_1,\dots,U_n\sim U(0,1)$ 
and  draw an edge $ij$ if $(U_i,U_j)\in S$.

As said in \refS{Sprel}, $\gwn\asto\gG_W$, which in this case means
  that
$\tsn\asto \gG(\etta_S)\in\ctoo$.
We denote $\gG(\etta_S)$ by $\gG_S$ and have thus
  the following result, using also \refT{Tdw}.

  \begin{theorem}\label{Ttsn}
	As \ntoo, $\tsn\asto\gG_S$. In particular,
the degree distribution
$\nu(\tsn)\asto\nu(\gG_S)$, which equals the
distribution of 
\begin{equation}\label{ttsn}
\gf_S(U)\=|\set{y:(U,y)\in S}|
=\P\bigpar{(U,U')\in S\mid U},
\end{equation}
with $U,U'\sim U(0,1)$ independent.
\nopf
  \end{theorem}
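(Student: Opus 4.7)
The plan is to invoke the machinery already developed in Sections~\ref{Sprel} and~\ref{Slim} and let everything collapse into a single application of each relevant prior result. Because $S$ is symmetric and increasing, the function $W\=\etta_S$ lies in $\ctw$, and by \refT{Too} the associated graph limit $\ixwt(W)=\gG_W$ belongs to $\ctoo$; this is exactly the object we are naming $\gG_S$.

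For the first assertion, I would appeal directly to the almost sure convergence $\gwn\asto\gG_W$ recalled in \refSS{SSgraphlim}, which is the basic sampling result from \Lovaszetal{} \cite{LSz}. Since by construction $\tsn=\gwn$ for this choice of $W$, this immediately gives $\tsn\asto\gG_S$.

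For the degree distribution, I would then use that $\nu\colon\cuq\to\cP$ is continuous (\refT{Tdegree}), so the almost sure convergence $\tsn\to\gG_S$ in $\cuq$ transfers to $\nu(\tsn)\asto\nu(\gG_S)$ in $\cP$. To make $\nu(\gG_S)$ explicit, I would apply \refT{Tdw}, which identifies $\nu(\gG_W)$ with the law of $\intoi W(U,y)\dd y$ for $U\sim U(0,1)$. Since $W=\etta_S$, this integral equals $|\set{y\in\oi:(U,y)\in S}|$, giving the first form of $\gf_S(U)$ in~\eqref{ttsn}. The equality with $\P\bigpar{(U,U')\in S\mid U}$ is then just Fubini/the definition of conditional probability, using that $U'$ is uniform on $\oi$ and independent of $U$.

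There is no serious obstacle here: every ingredient has been prepared upstream, and each clause of the statement reduces to exactly one invocation of \refT{Too}, the sampling result in \refSS{SSgraphlim}, \refT{Tdegree}, or \refT{Tdw}. This is presumably why the authors mark the theorem with \texttt{\textbackslash nopf}.
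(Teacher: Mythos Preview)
Your argument is correct and matches the paper's own reasoning: the text immediately preceding the theorem already says that $\tsn\asto\gG(\etta_S)$ follows from the sampling result recalled in \refSS{SSgraphlim}, and that the degree-distribution claim then follows ``using also \refT{Tdw}''. Your only additions are the explicit invocation of continuity of $\nu$ (\refT{Tdegree}) and of \refT{Too} to place $\gG_S$ in $\ctoo$, both of which are implicit in the paper's one-line justification.
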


By \refT{Too}, this construction
gives a canonical representation of the limit
objects in $\ctoo$, and we may restrict ourselves to closed or open
sets as in \refT{Too}\ref{Tooc}\ref{Tooo} to get a unique
representation. We can obtain any desired degree distribution
$\mu\in\cps$ for the limit by choosing $S=\smu$.
This construction further gives a canonical representation of random
threshold graphs 
for finite $n$, provided we make two natural additional assumptions.

\begin{theorem}\label{Trand}
  Suppose that $(G_n)_1^\infty$ is a sequence of random threshold graphs with
  $V(G_n)=[n]$ such that the distribution of each $G_n$ is invariant
  under permutations of $[n]$ and that the restriction 
(induced subgraph)
of $G_{n+1}$ to  $[n]$ has the same distribution as $G_{n}$, for every $n\ge1$.
If further $\nu(G_n)\pto\mu$ as \ntoo, for some $\mu\in\cP$, then
  $\mu\in\cps$ and, for every
  $n$, $G_n\eqd\tsxn{\smu}$.
\end{theorem}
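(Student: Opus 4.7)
The plan is to invoke the Aldous--Hoover representation theorem for infinite exchangeable random graphs. By Kolmogorov's extension theorem and the consistency hypothesis, the sequence $(G_n)$ extends to an infinite random graph $G_\infty$ on $\bbN$ whose restriction to $[n]$ has the same distribution as $G_n$; the permutation-invariance of each $G_n$ makes $G_\infty$ \exch{}. The Aldous--Hoover theorem, in the form tailored for graph limits in \cite{DJ}, then produces a random symmetric measurable function $\mathbf{W}\colon\oii\to\oi$ such that, conditionally on $\mathbf{W}$, the graph $G_n$ has the distribution of the sampled graph $G(n,\mathbf{W})$ from \refSS{SSgraphlim}.

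Next I would argue that $\mathbf{W}$ almost surely represents a threshold graph limit. Since every $G_n$ is surely a \tg, we have $\tind(F,G_n)=0$ for each $n$ and each $F\in\set{P_4,C_4,2K_2}$. The construction of $\mathbf{W}$ gives (conditionally on $\mathbf{W}$) $G_n\to\gG_{\mathbf{W}}$ in $\cuq$ almost surely, so continuity of $\tind(F,\cdot)$ yields $\tind(F,\gG_{\mathbf{W}})=0$ a.s.\ for each such $F$. \refT{TaltC4} then forces $\gG_{\mathbf{W}}\in\ctoo$ a.s., and \refT{Too} lets me take $\mathbf{W}=W_{\boldsymbol\nu}$ (in the class $\ctw$) for some random $\boldsymbol\nu\in\cps$ depending measurably on $\mathbf{W}$.

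Conditioning on $\mathbf W$ and applying \refT{Ttsn} (equivalently, \refT{Tlim}) gives $\nu(G_n)\asto\boldsymbol\nu$, hence $\nu(G_n)\pto\boldsymbol\nu$. Combined with the assumed $\nu(G_n)\pto\mu$, uniqueness of limits in probability forces $\boldsymbol\nu=\mu$ almost surely. In particular the deterministic limit $\mu$ lies in $\cps$, and $\mathbf{W}$ is a.s.\ equal (in $\ctw$) to $\wmu$. Therefore the unconditional distribution of $G_n$ equals that of $G(n,\wmu)=\tsxn{\smu}$, as claimed.

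The main obstacle is invoking Aldous--Hoover correctly and checking that the resulting random graphon almost surely lies in $\ctw$; this is where we use the fact that every $G_n$ is \emph{surely} (not just a.s.) a threshold graph, together with \refT{TaltC4} and \refT{Too}. Once that is established, identifying the random $\boldsymbol\nu$ with the deterministic $\mu$ via the degree-distribution hypothesis and concluding via \refT{Too} is routine.
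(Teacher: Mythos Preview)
Your proposal is correct and follows essentially the same approach as the paper: Kolmogorov extension to an infinite exchangeable graph, Aldous--Hoover to obtain a (possibly random) graphon, and then identification of that graphon with the deterministic $\wmu$. The only cosmetic difference is ordering: the paper first invokes \refT{Tlim} to get $G_n\pto\ggmu$ and then observes that the Aldous--Hoover limit $\gG_W$ must equal this deterministic $\ggmu$, whereas you first place the random graphon in $\ctoo$ via \refT{TaltC4}/\refT{Too} and then match degree distributions; both arrive at $W=\wmu$ a.s.\ by the same mechanism.
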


\begin{proof}
It follows from \refT{Tlim} that $G_n\pto \ggmu$. (To apply
\refT{Tlim} to convergence in probability, we can 
use the standard
trick of considering subsequences that converge \aex,
since every subsequence has such a subsubsequence
\cite[Lemma 4.2]{Kallenberg}.)

  If we represent a graph by its edge indicators, the random graph
  $G_n$ can be regarded as a family of \oix-valued random variables
indexed by pairs $(i,j)$, $1\le i<j\le n$. By assumption, these
  families for different $n$ are consistent, so by the Kolmogorov
extension
theorem \cite[Theorem 6.16]{Kallenberg}, they can be defined for all
  $n$ together, which means that there exists a random infinite graph
  $G_\infty$ with vertex set $\bbN$ whose restriction to $[n]$
  coincides (in distribution) with $G_n$. Moreover, since each $G_n$
is invariant under permutations of the vertices, so is $G_\infty$,
  \ie, $G_\infty$ is \exch.
By Aldous and Hoover \cite{Aldous:1985}, 
see also \cite{Kallenberg:exch} and \cite{DJ},
every \exch{} random infinite graph can be obtained as a mixture of
  $\gwoo$; in other words, as $\gwoo$ for some random function
  $W\in\cws$. In this case, the subgraphs $G_n$ converge in probability
  to the corresponding random $\gG_W$, see Diaconis and Janson \cite{DJ}.
Since we have shown that $G_n$ converge to a deterministic graph limit
  $\ggmu$, 
we can take $W$ deterministic so
it follows that $G_\infty\eqd\gwoo$ for some $W\in\cws$;
  moreover, $\ggmu=\gG_W$, and thus we can by \refT{Too} choose
  $W=\wmu$.
(Recall that in general, $W$ is not unique.)
Consequently,
\begin{equation*}
G_n\eqd\gxwx{n}{\wmu}=\tsxn{\smu}.
\qedhere
\end{equation*}
\end{proof}

\subsection{Random weights}\label{SSweights}
Definition \ref{D1.1} suggests immediately the construction \ref{DRG1}:

Let $X_1,X_2,\dots,$ be \iid{} copies of a random variable $X$, 
let $\taux\in\bbR$,
and 
let $\txtn$ be the threshold graph with vertex set $[n]$ and edges $ij$
for all pairs $ij$ such that $X_i+X_j>\taux$.
(We can without loss of generality let $\taux=0$, by replacing $X$ by
$X-\taux/2$.)

Examples \ref{Ex2} and \ref{Euniform} are in this mode.

Let $F(x)\=\P(X\le x)$ be the distribution function of $X$, and let
$F\qw$ be its right-continuous inverse defined
by
\begin{equation}\label{e2fmui}
F\qw(u)\=\sup\set{x\in\bbR:F(x)\le u}.
\end{equation}
(Cf.\ \eqref{fmui}, where we consider distributions on $\oi$ only.)
Thus $-\infty<F\qw(u)<\infty$ if $0<u<1$, while $F\qw(1)=\infty$.
It is  well-known 
that the random variables $X_i$ can be constructed as $F\qw(U_i)$ with $U_i$
independent uniformly distributed random variables on $(0,1)$, which
leads to the following theorem, showing that this construction is
equivalent to the one in \refSS{SSset} for a suitable
set $S$.
Parts of this theorem were found earlier by Masuda, Konno and
co-authors \cite{KMRS,MMK}. 

\begin{theorem}\label{Tkonst}
Let $S$ be the symmetric increasing set 
  \begin{equation}\label{jeppe}
	S\=\set{(x,y)\in(0,1]^2:F\qw(x)+F\qw(y)>\taux}.
  \end{equation}
Then $\txtn\eqd\tsn$ for every $n$.

Furthermore, as \ntoo, the degree distribution
$\nu(\txtn)\asto\mu$ and thus
$\txtn\asto\gG_\mu$, where
$\mu\in\cps$ is the distribution of the random variable
$1-F(\taux-X)$, \ie{} 
\begin{equation}\label{e2fmu}
 \mu[0,s]=\P\bigpar{1-F(\taux-X)\le s},
\qquad s\in\oi.
\end{equation}
\end{theorem}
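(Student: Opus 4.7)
The plan is to couple the two random-graph models so that the distributional equality $\txtn\eqd\tsn$ becomes an actual equality of graphs, and then simply read off the limit statement from \refT{Ttsn}.

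First I would check that the set $S$ in \eqref{jeppe} is symmetric and increasing: symmetry is obvious from $x+y=y+x$, and monotonicity follows because $F\qw$ is an increasing function. For the distributional identity, I use the standard quantile coupling: let $U_1,\dots,U_n$ be \iid{} $U(0,1)$ and set $X'_i\=F\qw(U_i)$. Then $(X'_1,\dots,X'_n)\eqd(X_1,\dots,X_n)$, and by the very definition \eqref{jeppe} of $S$, the pair $ij$ is an edge of the graph built from $X'_1,\dots,X'_n$ and $\taux$ exactly when $(U_i,U_j)\in S$, i.e.\ exactly when $ij$ is an edge of $\tsn$ built from the same $U_i$'s. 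Hence the two random graphs coincide under the coupling, proving $\txtn\eqd\tsn$.

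For the second part, \refT{Ttsn} immediately yields $\tsn\asto\gG_S$ and $\nu(\tsn)\asto\nu(\gG_S)$, with the latter equal to the distribution of $\gf_S(U)=|\set{y:(U,y)\in S}|$. What remains is to identify this distribution with that of $1-F(\taux-X)$. The key technical step is the standard quantile identity: for any real $a$,
\begin{equation*}
\bigabs{\set{y\in(0,1]:F\qw(y)>a}}=1-F(a),
\end{equation*}
which follows from the fact that, with $F\qw$ defined as the right-continuous inverse \eqref{e2fmui} and $F$ right-continuous, $F\qw(y)\le a \iff y\le F(a)$. Applying this with $a=\taux-X$, where $X=F\qw(U)\eqd X_1$, gives $\gf_S(U)=1-F(\taux-X)$, so $\nu(\gG_S)=\mu$ as described by \eqref{e2fmu}.

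Finally, to conclude $\mu\in\cps$, I would invoke \refT{Too}: since $\mu=\nu(\gG_S)$ is the degree distribution of a threshold graph limit (because $S$ is symmetric and increasing, so $\etta_S\in\ctw$), it automatically lies in $\cps$. The only genuine obstacle here is the right-continuity bookkeeping for the quantile identity; everything else is a direct application of results already proved in Sections~\ref{Slim} and~\ref{SSset}.
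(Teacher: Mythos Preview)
Your proof is correct and follows the paper's approach exactly: quantile coupling $X_i=F\qw(U_i)$ to get $\txtn=\tsn$, then \refT{Ttsn}, then identifying $\gf_S(U)$ with $1-F(\taux-X)$. The paper carries out this last identification via $\P(X'>\taux-X\mid X)=1-F(\taux-X)$ with an independent $X'\eqd X$, which sidesteps the small wrinkle that your equivalence $F\qw(y)\le a\iff y\le F(a)$ holds as stated for the \emph{left}-continuous inverse rather than the right-continuous one in \eqref{e2fmui} (though your measure identity $|\{y:F\qw(y)>a\}|=1-F(a)$ is correct for either, since the two inverses agree a.e.).
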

\begin{proof}
  Taking $X_i=F\qw(U_i)$, we see that 
  \begin{equation*}
\text{there is an edge }ij
\iff
  F\qw(U_i)+F\qw(U_j)>\taux\iff(U_i,U_j)\in S,	
  \end{equation*} 
which shows that
  $\txtn=\tsn$.

The remaining assertions now follow from \refT{Ttsn} together
with the calculation, with $U,U'\sim U(0,1)$ independent and
$X=F\qw(U)$, $X'=F\qw(U')$,
\begin{equation*}
  \begin{split}
\gf_S(U)
&=\P\bigpar{(U,U')\in S\mid U}	
=\P\bigpar{F\qw(U)+F\qw(U')>\taux\mid U}	
\\&
=\P\bigpar{X+X'>\taux\mid X}	
=\P\bigpar{X'>\taux-X\mid X}	
=1-F(\taux-X).
  \end{split}
\end{equation*}
\end{proof}

The set $S$ defined in \eqref{jeppe} is in general neither
open nor closed; the corresponding open set is
\begin{equation*}
  S\inre=\bigset{(x,y)\in(0,1)^2:F\qw(x-)+F\qw(y-)>\taux},
\end{equation*}
and the corresponding closed set $\smu$ in \refT{Too} can be found as
$\tilde{S\inre}$ from 
\eqref{ts}. If we assume  for simplicity  that the distribution of $X$
is continuous, then, as is easily verified, 
\begin{equation*}
  \smu=\bigset{(x,y)\in\oii:F\qw(x)+F\qw(y)\ge\taux},
\end{equation*}
where we define $F\qw(1)=\infty$ (and interpret
$\infty+(-\infty)=\infty$ in case $F\qw(0)=-\infty$).
We can use these sets  instead of $S$ in \eqref{jeppe}
since they differ by null sets only.

\subsection{Random addition of vertices}\label{SSaddition}
Preferential  attachment graphs are a rich topic of research in modern graph theory. 
See the monograph \cite{ChungLu}, along with the survey
\cite{Mitzenmacher}. The versions in this section are natural because
of  \ref{D1.2} and the construction
 \ref{DRG2}.

Let $\tnp$ be the random threshold graph with $n$
vertices obtained by adding vertices one by one
with the new vertices chosen as isolated or dominating at random,
independently of each other and with a given probability $p\in\oi$ of
being dominating. (Starting with a single vertex, there are thus 
$n-1$ vertex additions.)

The vertices are not equivalent (for example, note
that the edges $1i$, $i\neq1$, appear independently, but not the
edges $ni$, $i\neq n$), so  we also define the random threshold graph
$\tnpx$ obtained by a random permutation of the vertices in $\tnp$.
(When considering unlabeled graphs, there is no difference between
$\tnp$ and $\tnpx$.)

\begin{remark}
  We may, as stated in \ref{DRG2}, use different
  probabilities $p_i$ for different vertices. We leave it to the
  reader to explore this case, for example with $p_i=f(i/n)$ for
  some given continuous function $f:\oi\to\oi$.
\end{remark}

\begin{theorem}\label{Ttnp}
  The degree distribution $\nu(\tnp)$ 
converges 
\as{}
as \ntoo{} to a distribution $\mu_p$ that, for $0<p<1$, has constant density
$(1-p)/p$ on $(0,p)$ and $p/(1-p)$ on $(p,1)$; $\mu_0$ is a point mass
at $0$ and $\mu_1$ is a point mass at $1$. In particular, $\mu_{1/2}$
is the uniform distribution on $\oi$.

Consequently, $\tnp\asto \gG_{\mu_p}\in\ctoo$.
\end{theorem}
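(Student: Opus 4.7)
The plan is to reduce the statement to Theorem~\ref{Tlim}: it suffices to prove that $\nu(\tnp)\asto\mu_p$, and then $\tnp\asto\Gamma_{\mu_p}$ is immediate (together with $\mu_p\in\cps$, which will be automatic or directly verifiable from \eqref{musss}).

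The first step is an explicit formula for the vertex degrees. Let $\xi_2,\dots,\xi_n$ be the \iid{} $\Be(p)$ indicators that vertex $k$ was added as dominating, and set $S_k\=\sum_{j=2}^k\xi_j$. A routine bookkeeping (vertex $k$ has neighbors among both its predecessors and its successors) yields
$$
d(k)=\xi_k(k-1)+(S_n-S_k),
$$
because $k$ connects to \emph{all} of its $k-1$ predecessors exactly when $\xi_k=1$, and each later dominating vertex contributes precisely one additional neighbor. A standard maximal inequality for the martingale $S_k-kp$ gives $\max_{k\le n}|S_k-kp|=o(n)$ almost surely, so
$$
\frac{d(k)}{n}=\xi_k\,\frac{k}{n}+p\Bigl(1-\frac{k}{n}\Bigr)+o(1)
$$
uniformly in $1\le k\le n$ almost surely. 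This uniform estimate is the one real technical point; it decouples $S_k$ from $\xi_k$ and makes $d(k)/n$ asymptotically a function of the pair $(\xi_k,k/n)$ alone.

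Given the uniform estimate, for any bounded continuous $f$ on $\oi$, splitting $n\qw\sum_{k=1}^n f(d(k)/n)$ according to the two values of $\xi_k$ and applying the strong law (the deterministic parts converging by a Riemann sum) gives
$$
\int f\,d\nu(\tnp)\asto p\intoi f\bigl(u+p(1-u)\bigr)\,du+(1-p)\intoi f\bigl(p(1-u)\bigr)\,du.
$$
The changes of variable $t=u+p(1-u)$ and $t=p(1-u)$ exhibit the right-hand side as $\int f\,d\mu_p$, where $\mu_p$ has the piecewise constant density announced in the statement; one then checks directly from \eqref{musss} that $\mu_p\sss=\mu_p$, so that $\mu_p\in\cps$. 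The boundary cases $p\in\set{0,1}$ are trivial, corresponding to the empty and complete graphs and to point masses at $0$ and $1$, respectively. A final appeal to \refT{Tlim} upgrades $\nu(\tnp)\asto\mu_p$ to $\tnp\asto\Gamma_{\mu_p}$, completing the proof.
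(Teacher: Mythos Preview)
Your proof is correct and follows essentially the same route as the paper's. Both arguments compute the degree of vertex $k$ as $d(k)=\xi_k(k-1)+(S_n-S_k)$ (the paper writes this via $Z_n(t)=S_{\lfloor nt\rfloor}$), invoke the uniform strong law $\max_{k\le n}|S_k-kp|=o(n)$ to approximate $d(k)/n$ by $\xi_k\,k/n+p(1-k/n)$, split the empirical average of a test function according to $\xi_k\in\{0,1\}$, and pass to the limit by a Riemann-sum/weak-convergence argument; the paper phrases the last step as integration against $d(n^{-1}Z_n(t))\to p\,dt$, which is exactly your ``strong law plus Riemann sum'' in Stieltjes form. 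Your explicit check that $\mu_p\sss=\mu_p$ via \eqref{musss} is not needed, since \refT{Tlim} already guarantees that any limit of threshold-graph degree distributions lies in $\cps$, but it does no harm.
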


\begin{proof}
Let $Z_n(t)$ be the number of vertices in \set{1,\dots,\floor{nt}}
that are added as dominating. It follows from the law of large numbers
that $n\qw Z_n(t)\asto pt$, uniformly on $\oi$, and we assume this in
the sequel of the proof.

If vertex $k$ was added as isolated, it has degree $Z_n(1)-Z_n(k/n)$,
since its neighbours are the vertices that later are added as dominating.
Similarly, if vertex $k$ was added as dominating,
it has degree $k-1+Z_n(1)-Z_n(k/n)$.
Consequently, if $\mu_n$ is the (normalized) degree distribution of
$\tnp$, and $\phi$ is any continuous 
function on $\oi$,
then
\begin{multline*}
    \intoi\phi(t)\dd\mu_n(t)
=\frac1n\sum_{k=1}^n\phi(d(k)/n)
\\
\shoveleft{\quad=\frac1n\sum_{k=1}^n
\Bigl(
\phi\bigpar{n\qw Z_n(1)-n\qw Z_n(k/n)}
\ett{\gD Z_n(k/n)=0}}
\\
+
\phi\bigpar{n\qw Z_n(1)-n\qw Z_n(k/n)+(k-1)/n}
\ett{\gD Z_n(k/n)=1}
\Bigr).	
\end{multline*}
Since $n\qw Z_n(t)\to pt$ uniformly, and $\phi$ is uniformly
continuous,
it follows that, as \ntoo,
\begin{equation*}
  \begin{split}
    \intoi\phi(t)\dd\mu_n(t)
&=\frac1n\sum_{k=1}^n
\Bigl(
\phi\bigpar{p(1-k/n)}
\ett{\gD Z_n(k/n)=0}
\\&\hskip4em
+
\phi\bigpar{p(1-k/n)+k/n}
\ett{\gD Z_n(k/n)=1}
\Bigr)
+o(1)
\\
&=
\intoi
\phi\bigpar{p(1-t)}\dd\bigpar{n\qw\floor{nt}-n\qw Z_n(t)}
\\&\hskip4em
+
\intoi
\phi\bigpar{p(1-t)+t}\dd\bigpar{n\qw Z_n(t)}
+o(1)
.	
  \end{split}
\end{equation*}
Since the convergence $n\qw Z_n(t)\to pt$ implies (weak) convergence of
the corresponding measures, we finally obtain, as \ntoo,
\begin{equation*}
  \begin{split}
    \intoi\phi(t)\dd\mu_n(t)
&\to
\intoi
\phi\bigpar{p(1-t)}(1-p)\dd t
+
\intoi
\phi\bigpar{p(1-t)+t}p\dd t
\\
&=\frac{1-p}p
\int_0^p \phi(x)\dd x
+\frac{p}{1-p}
\int_p^1 \phi(x)\dd x
\\&
=\intoi\phi(x)\dd\mu_p(x)
,	
  \end{split}
\end{equation*}
with obvious modifications if $p=0$ or $p=1$. 
\end{proof}

\begin{figure}[ht] 
   \hskip-1cm
   \begin{minipage}{2.6in}
   \includegraphics[width=2.6in]{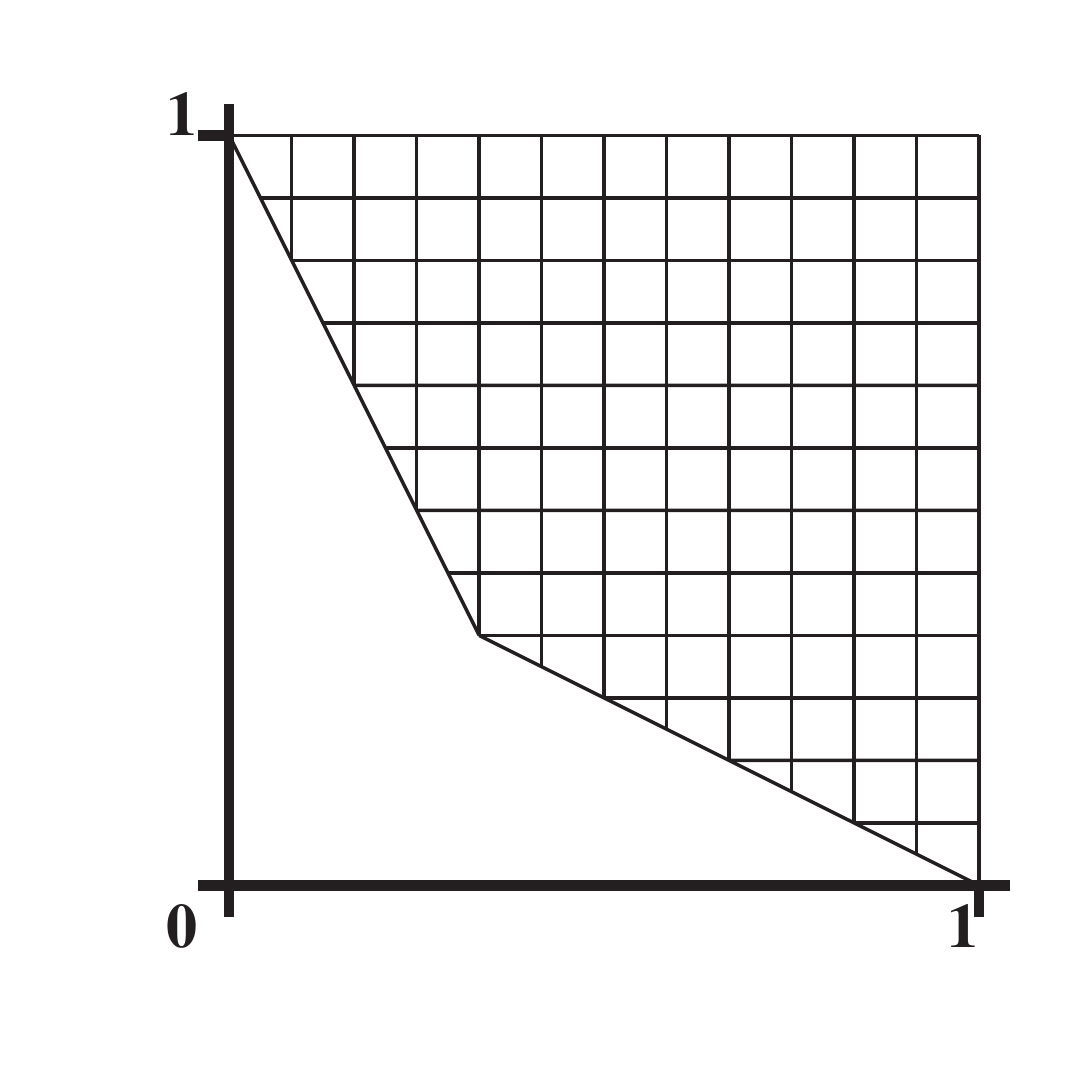} 
   \end{minipage}
    \begin{minipage}{2.7in}
   \includegraphics[width=2.6in]{Sp2sm.pdf} 
   \end{minipage}
   \caption{Two examples of the sets $S_p$; the one on the right shows
   the special   case where $p=0.5$.}
   \label{fig:Sp}
\end{figure}

Let $S_p\=S_{\mu_p}$ be the corresponding subset of $\oii$.
If $0<p<1$, $\mu_p$ has the distribution function
\begin{equation}
  F_{\mu_p}(x)=
  \begin{cases}
\frac{1-p}{p}x, & 0\le x\le p,	
\\
1-\frac{p}{1-p}(1-x), & p\le x\le 1,	
  \end{cases}
\end{equation}
and it follows from \eqref{smu1} that $S_p$ is the quadrilateral with
vertices  $(0,1)$, $(1-p,1-p)$, $(1,0)$ and $(1,1)$,
see \refF{fig:Sp}.
In the special case $p=1/2$, $\mu_p$ is the
uniform distribution on $\oi$, and $S_p$ is the triangle
\set{(x,y)\in\oii:x+y\ge1} pictured in \refF{fig:sps2}
 with vertices  $(0,1)$, $(1,0)$ and $(1,1)$.
Finally, $S_0$ consists of the upper and right edges only, and $S_1=\oii$.

Removing any vertex from $\tnp$ (and relabeling the remaining ones)
yields $\tnxp{n-1}$. It follows that the same property holds for
$\tnpx$, so $\tnpx$ satisfies the assumptions of \refT{Trand}.
Since $\tnpx$ has the same degree distribution as $\tnp$, Theorems
\refand{Trand}{Ttnp} show the following equality.

\begin{corollary}\label{C=}
  If $0\le p\le 1$ and $n\ge1$, then $\tnpx\eqd \tsxn{S_p}$.
\end{corollary}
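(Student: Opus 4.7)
The plan is to verify the hypotheses of \refT{Trand} for the sequence $(\tnpx)_{n\ge1}$ with limiting degree distribution $\mu=\mu_p$, and then to read off the conclusion $\tnpx\eqd\tsxn{\smu}=\tsxn{S_p}$, since $S_p$ is by definition $S_{\mu_p}$. Three properties must be checked: (i) each $\tnpx$ is invariant in law under permutations of $[n]$; (ii) the restriction of $\tnpx[n+1]$ to $[n]$ has the same distribution as $\tnpx$; and (iii) $\nu(\tnpx)\pto\mu_p$.

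Conditions (i) and (iii) come essentially for free. By construction $\tnpx$ is $\tnp$ composed with a uniform random permutation of $[n]$, which gives (i). For (iii), $\tnpx$ and $\tnp$ have the same multiset of vertex degrees, hence the same degree distribution, so \refT{Ttnp} yields $\nu(\tnpx)=\nu(\tnp)\asto\mu_p$.

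The real content is (ii), and I would establish it by first proving the unpermuted analogue: for any fixed $k\in[n+1]$, removing vertex $k$ from $\tnxp{n+1}$ and relabeling the survivors in $[n]$ produces a random graph distributed as $\tnxp n$. Encode $\tnxp{n+1}$ by iid indicators $\epsilon_2,\dots,\epsilon_{n+1}\sim\Be(p)$, with $\epsilon_i=1$ meaning that vertex $i$ is dominating when added. For each $i<k$ nothing changes. For each $i>k$, the event that the surviving vertex originally labeled $i$ is adjacent to every earlier surviving vertex is still exactly $\{\epsilon_i=1\}$, since deleting a single earlier vertex does not alter domination or isolation; the boundary case $k=1$ with $i=2$ just produces a lone first vertex whose status is irrelevant. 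Hence after relabeling the surviving $\epsilon$-sequence is iid $\Be(p)$ of length $n-1$, which is the law of $\tnxp n$. Property (ii) for $\tnpx$ then follows: by the exchangeability from (i), the induced subgraph of $\tnpx[n+1]$ on $[n]$ can be realized by deleting a uniformly chosen vertex of $\tnxp{n+1}$ and then uniformly relabeling the remaining $n$ vertices, which by the unpermuted statement is precisely $\tnpx$.

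With (i)--(iii) verified, \refT{Trand} gives $\tnpx\eqd\tsxn{S_{\mu_p}}=\tsxn{S_p}$. The main obstacle, as usual for reductions of this kind, is the deletion-consistency of (ii); once that is in hand, the rest is bookkeeping on top of Theorems \ref{Trand} and \ref{Ttnp}.
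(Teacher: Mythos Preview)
Your proposal is correct and follows essentially the same route as the paper: verify permutation invariance, deletion consistency, and convergence of the degree distribution, then invoke \refT{Trand} together with \refT{Ttnp}. The paper is terser---it simply asserts that removing any vertex from $\tnp$ yields $\tnxp{n-1}$ and that this passes to $\tnpx$---while you spell out the deletion-consistency argument via the $\Be(p)$ coding, but the structure and the key ideas are identical.
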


Hence the random threshold graphs in this subsection are special cases
of the general construction in \refSS{SSset}. We can also
construct them using random weights as in \refSS{SSweights}.

\begin{corollary}\label{C=2}
  If $0\le p\le 1$ and $n\ge1$, then $\tnpx\eqd \txxn{0}$,
where $X$ has the density $1-p$ on $(-1,0)$ and
$p$ on $(0,1)$. 
\end{corollary}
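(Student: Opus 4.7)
The plan is to combine Corollary \ref{C=} with Theorem \ref{Tkonst}: the former identifies $\tnpx$ with $\tsxn{S_p}$, while the latter expresses $\txxn{0}$ as $\tsxn{S}$ for some explicit set $S$ determined by the quantile function of $X$. Showing that $S$ agrees with $S_p$ up to a Lebesgue null set will immediately give $\tsxn{S}\eqd\tsxn{S_p}$, since the edges of $\tsxn{\cdot}$ depend only on $\etta_{\cdot}(U_i,U_j)$ with $(U_i,U_j)$ having Lebesgue density.

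First I would compute the distribution function of $X$: $F(x)=(1-p)(x+1)$ on $[-1,0]$ and $F(x)=1-p+px$ on $[0,1]$, with the corresponding right-continuous inverse
\begin{equation*}
F\qw(u)=
\begin{cases} u/(1-p)-1, & 0\le u\le 1-p,\\ (u-(1-p))/p, & 1-p\le u\le 1.\end{cases}
\end{equation*}
By Theorem \ref{Tkonst} (with $\taux=0$), $\txxn{0}\eqd\tsn$ where $S=\{(x,y)\in(0,1]^2:F\qw(x)+F\qw(y)>0\}$. The task then reduces to checking $S=S_p$ modulo null sets.

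Next I would split into four cases according to whether each of $x,y$ lies in $[0,1-p]$ or $[1-p,1]$. A direct computation (multiplying through by $p(1-p)$ to clear denominators) shows that the mixed case $x\le 1-p\le y$ yields the condition $px+(1-p)y>1-p$, which is the line through $(0,1)$ and $(1-p,1-p)$; the symmetric mixed case gives $(1-p)x+py>1-p$, the line through $(1-p,1-p)$ and $(1,0)$; the upper right square $x,y\ge1-p$ always lies in $S$ up to the single point $(1-p,1-p)$; and the lower left square $x,y\le1-p$ always lies outside $S$ up to the same point. This matches exactly the quadrilateral $S_p$ with vertices $(0,1),(1-p,1-p),(1,0),(1,1)$ identified in \refSS{SSaddition}, with discrepancies only on the bounding line segments.

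Combining these three steps, $\txxn{0}\eqd\tsxn{S}\eqd\tsxn{S_p}\eqd\tnpx$, where the middle equality uses that $S\triangle S_p$ is a Lebesgue null set and the last is Corollary \ref{C=}. The only mild obstacle is the case analysis in the third step; everything else is a mechanical application of previously stated results. (A conceptually cleaner alternative would be to verify that $\txxn{0}$ satisfies the exchangeability and consistency hypotheses of Theorem \ref{Trand} and that its degree distribution limit, computed from Theorem \ref{Tkonst} as the law of $1-F(-X)$, equals $\mu_p$; this also yields the claim via the uniqueness statement of Theorem \ref{Trand}, but requires the separate computation that the law of $p(1+X)\vee(p+(1-p)X)$ has density $(1-p)/p$ on $(0,p)$ and $p/(1-p)$ on $(p,1)$, so the savings are modest.)
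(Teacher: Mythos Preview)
Your proof is correct and follows essentially the same route as the paper: compute the set $S$ from \eqref{jeppe} for this particular $X$ and identify it with the quadrilateral $S_p$, then invoke Corollary~\ref{C=}. The paper compresses all of this into ``a simple calculation shows that the set $S$ given by \eqref{jeppe} is the quadrilateral $S_p$''; your explicit case analysis and care with the null-set discrepancy on the boundary are exactly what underlies that sentence.
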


\begin{proof}
  A simple calculation shows that the set $S$ given by
  \eqref{jeppe} is the quadrilateral $S_p$.
\end{proof}

We may transform $X$ by a linear map; for example, we may equivalently
take $X$ with density $2(1-p)$ on $(0,1/2)$ and
$2p$ on $(1/2,1)$, with the threshold $\taux=1$.
In particular, $\tnxx{1/2}\eqd\txxxn{U,1}$, where $U\sim U(0,1)$ as in
\refE{Euniform}.

\subsection{Uniform random \tg{s}}\label{SSuniformlim}

Let $T_n$ be a random unlabeled \tg{} of order
$n$ with the uniform distribution studied in
\refS{Suniform}.
Similarly, let $\tln$ be a random labeled \tg{} of
order $n$ with the uniform distribution.
Although $T_n$ and $\tln$ have different distributions,
see \refS{Suniform}, the next theorem shows that they have the same limit 
as \ntoo.

\begin{theorem}\label{Tdegs}
  The degree distributions $\nu(T_n)$ and
  $\nu(\tln)$ both converge in probability to the uniform
  distribution $\gl$ on $\oi$. Hence,
  $T_n\pto\gG_\gl$ and $\tln\pto\gG_\gl$.
\end{theorem}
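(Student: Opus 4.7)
By \refT{Tlim} it suffices to show $\nu(T_n)\pto\gl$ and $\nu(\tl_n)\pto\gl$, since then $T_n$ and $\tl_n$ automatically converge in $\cuq$ to $\ggmu$ for $\mu=\gl$, and $\gl\in\cps$. The plan is to analyze the degree sequence by exploiting the block decomposition of \refSS{SSblock} together with the representation of \refT{Tblock}.

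First, I would use \refT{Tblock} (equivalently, \refAlg{Alg3}) to realize the block lengths $(B_1,\dots,B_\elm)$ of $T_n$ (resp.\ $\tl_n$) as a truncated i.i.d.\ sequence with $\BB\sim\Ge(1/2)$ (resp.\ $\BB\sim\Po(\log2)$), conditioned on $\sum_{j=1}^\elm B_j=n$. Set $S_k\=\sum_{j\le k}B_j$ and let $I_k\in\set{0,1}$ indicate whether the $k$-th block is dominating or isolated; these alternate, with $I_1$ a fair coin flip. Next, I would write down the degree of a vertex in block $k$ as described in \refSS{SSblock}: if the block is isolated, its degree equals $Z_\elm-Z_k$, where $Z_k\=\sum_{j\le k,\,I_j=1}B_j$ is the number of dominating vertices in the first $k$ blocks; if the block is dominating, the degree equals $S_{k-1}+(B_k-1)+(Z_\elm-Z_k)$.

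The bulk of the work is then a law of large numbers argument. Unconditionally, $S_k/k\to \E(\BB\mid\BB\ge1)$ and, since blocks alternate, $Z_k/k\to\thalf\E(\BB\mid\BB\ge1)$; a standard renewal argument (using that the probability of hitting $n$ exactly is bounded below away from $0$, as noted after \refAlg{Alg3}) transfers these LLNs to the conditional law. Consequently, writing $t\=S_{k-1}/n$ for the normalized position of block $k$, one gets $Z_k/n\to t/2$ and $Z_\elm/n\to 1/2$ uniformly in $k$, in probability. Plugging into the two formulas above shows that a vertex in block $k$ has normalized degree asymptotically $(1-t)/2$ if its block is isolated and $(1+t)/2$ if dominating. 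Since half of the mass on $\oi$ (weighted by block length) lies in isolated blocks and half in dominating ones, with $t$ asymptotically uniform on $\oi$, the empirical degree distribution $\nu(T_n)$ (and the same for $\nu(\tl_n)$) converges weakly in probability to the uniform mixture of $U(0,\thalf)$ and $U(\thalf,1)$, i.e.\ to $\gl$.

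The main technical obstacle is justifying that conditioning on $\sum_{j=1}^\elm B_j=n$ does not destroy the LLN for $(S_k,Z_k)$; the standard remedy is a local limit theorem for the random walk $S_k$ (both $\Ge(1/2)$ and $\Po(\log2)$ are non-lattice modulo aperiodicity on $\bbN$, have finite variance, and the event $\set{S_\elm=n}$ has probability of order $n\qw$), which allows one to control ratios of unconditional to conditional probabilities on events determined by finitely many $B_j$'s. Once this is in hand, the method of moments via \refL{Ldegree} (or simply weak convergence of empirical measures) upgrades the pointwise convergence of $\int\phi\,d\nu(T_n)$ to $\int\phi\,d\gl$ in probability for every continuous $\phi$, and \refT{Tlim} completes the proof.
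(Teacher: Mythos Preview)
Your approach is essentially the paper's: represent the block lengths via \refT{Tblock}, write vertex degrees in terms of partial sums of the dominating and isolated block lengths, apply the law of large numbers, and read off that the normalized degrees are asymptotically $(1\pm t)/2$ with $t$ uniform, giving $\gl$. One correction and simplification: the event $\bigl\{\sum_{j=1}^\elm B_j=n\bigr\}$ has probability converging to $1/\E B_2>0$ by renewal theory (as you yourself note earlier), not $O(n\qw)$; consequently the local limit theorem is unnecessary---the paper simply proves $\nu(\tgn)\asto\gl$ for the \emph{unconditioned} process (stopping at the first $\elm$ with $S_\elm\ge n$) and then observes that conditioning on an event of asymptotically positive probability preserves convergence in probability.
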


By \refSS{SSunlabeled},  $T_n\eqd T_{n,1/2}$;
hence the result for unlabeled graphs follows from \refT{Ttnp}.

\begin{proof}
  We use \refT{Tblock}; in fact, the proof works for random
  \tg{s} generated by \refAlg{Alg3} for any \iid{} random variables
  $B_2,B_3,\dots$ with finite mean, and any $B_1$. (In the case when
  $B_2$ is always  a 
  multiple of some $d>1$, there is a trivial modification.)
Let $\gb\=\E B_2$.

The algorithm starts by choosing (random) block lengths
$B_1,B_2,\dots$ until their sum is at least $n$, and
then rejects them and restarts (Step 3) unless the sums is exactly
$n$. It is simpler to ignore this check, so we consider the
following modified algorithm: Take  $B_1,B_2,\dots$ as above.
Let $S_k\=\sum_{j=1}^kB_j$ be their partial sums and
let $\elm(n)\=\min\set{k:S_k\ge n}$. Toss a
coin to determine whether the first block is isolated or dominating,
and construct a random \tg{} 
by adding $\elm(n)$ blocks of vertices with $B_1,
\dots,B_{\elm(n)}$ elements, alternatingly isolated and
dominant.

This gives a random graph $\tgn$ with $S_{\elm(n)}$ vertices,
but conditioned on $S\nun=n$, we obtain the desired
random threshold graph. (Cf.\ \refT{Tblock}.) Since 
$\P(S\nun=n)$ converges to $1/\gb>0$ by
renewal theory, it suffices to prove that
$\nu(\tgn)\pto\gl$ as \ntoo.
In fact, we will show that $\nu(\tgn)\asto\gl$ if we first choose an
infinite sequence $B_1,B_2,\dots$ and then let \ntoo.

Let $\so_m\=\sum_{2k+1\le m} B_{2k+1}$
and $\se_m\=\sum_{2k\le m} B_{2k}$ be
the partial sums of the odd and even terms. By the law of large
numbers, \as{} $S_n/n\to\gb$ and
$\so_n/n\to\frac12\gb$, $\se_n/n\to\frac12\gb$.
We now consider a fixed sequence $(B_j)_1^\infty$ such that these limits hold.
Since $S_{\elm(n)-1}<n\le S\nun$, it follows, as
is well-known, that $n/\elm(n)\to\gb$, so
$\elm(n)=n/\gb+o(n)$.

Suppose for definiteness that the first block is chosen to be
isolated; then every odd block is isolated and every even block is
dominating. (In the opposite case, interchange even and odd below.)
If $i\in(S_{2k},S_{2k+1}]$, then $i$
belongs to block $2k+1$, so $i$ is added as isolated, and
the neighbors of $i$ will be only the vertices added after
$i$ as dominating, \ie{}
$\bigcup_{k<\ell\le\elm(n)/2}(S_{2\ell-1},S_{2\ell}]$, 
and 
\begin{equation*}
  d(i)=\sum_{2k<2\ell\le\elm(n)} B_{2\ell} 
=\se\nun-\se\nui.
\end{equation*}
If instead $i\in(S_{2k-1},S_{2k}]$, then
$i$ is also joined to all vertices up to $S_{2k}$,
and thus
\begin{equation*}
  d(i)
=\sum_{2\ell\le\elm(n)} B_{2\ell} +\sum_{2\ell+1\le\elm(i)} B_{2\ell+1} 
=\se\nun+\so\nui.
\end{equation*}
Hence, if $i$ is in an odd block,
\begin{equation*}
  \frac{d(i)}n
=
\frac1n\Bigpar{\elm(n)\frac\gb2-\elm(i)\frac\gb2+o(n)}
= \frac{n-i+o(n)}{2n}
=\frac12-\frac{i}{2n}+o(1),
\end{equation*}
and if $i$ is in an even block, similarly,
\begin{equation*}
  \frac{d(i)}n
=\frac12+\frac{i}{2n}+o(1).
\end{equation*}
Now fix $t\in(0,1/2)$ and let $\eps>0$. Then the
following holds if $n$ is large enough:
If $i$ is in an even block, then $d(i)/n\ge 1/2+o(1)>t$.
If $i$ is in an odd block and $i\le i_1\=(1-2t-2\eps)n$, then 
$d(i)/n=\frac12(n-i)/n+o(1)\ge t+\eps+o(1)>t$. 
If $i$ is in an odd block and $i\ge i_2\=(1-2t+2\eps)n$, then 
$d(i)/n=\frac12(n-i)/n+o(1)\le t-\eps+o(1)<t$. 
Consequently, for large $n$, $d(i)/n\le t$ only if
$i$ is in an odd block $(S_{2k},S_{2k+1}]$,
  and in this case $2k+1>\elm(i_1)$ is necessary and
  $2k+1>\elm(i_2)$ is sufficient. Hence,
  \begin{equation*}
\so\nun-\so_{\elm(i_2)}	
\le
|\set{i:d(i)/n\le t}|
\le
\so\nun-\so_{\elm(i_1)}	.
  \end{equation*}
Since $\nu(\tgn)[0,t]=\frac1n
|\set{i:d(i)/n\le t}|$ and
\begin{equation*}
\frac1n\bigpar{\so\nun-\so_{\elm(i_j)}}
=\frac{\gb(\elm(n)-\elm(i_j))+o(n)}{2n}=\frac{n-i_j+o(n)}{2n}
=t\pm\eps+o(1),
\end{equation*}
it follows that
  \begin{equation*}
t-\eps+o(1)\le
\nu(\tgn)[0,t]
\le
t+\eps+o(1).
  \end{equation*}
Since $\eps$ is arbitrary, this shows that
$\nu(\tgn)[0,t]\to t$, for every
$t\in(0,\tfrac12)$.
We clearly obtain the same result if the first block is dominating.

For $t\in(\frac12,1)$ we can argue similarly, now
analysing the dominant blocks. Alternatively, we may apply the result
just obtained to the complement of $\tgn$, which is obtained
from the same $B_j$ by switching the types of the blocks.
This shows that $\nu(\tgn)[0,t]\to t$ for
$t\in(\tfrac12,1)$ too.

Hence,  $\nu(\tgn)[0,t]\to t$ for every
$t\in(0,1)$ except possibly $\frac12$, which shows
that  $\nu(\tgn)\to \gl$.
\end{proof}

\section{Vertex degrees in uniform random threshold graphs}
\label{Sdegrees}

We have seen in \refT{Tdegs} that the normalized degree
distributions $\nu(T_n)$ and $\nu(\tln)$ for
uniform unlabeled and labeled random \tg{s} both converge to
the uniform distribution on $\oi$. This is for weak
convergence of distributions in $\cP$, which is equivalent to
averaging over degrees in intervals $(an,bn)$; we here refine this by
studying individual degrees.

Let $N_d(G)$ be the number of vertices of degree $d$ in
the graph $G$. Thus, $D_G$, the degree of a random vertex
in $G$ has distribution $\P(D_G=d)=N_d/v(G)$.
(Recall that $\nu(G)$ is the distribution of
$D_G/v(G)$, see \refS{Sdeg}.)

We will study the random variables $N_d(T_n)$ and
$N_d(\tln)$ describing the numbers of vertices of a given
degree $d$ in a uniform random unlabeled or labeled \tg, and
in particular their expectations $\E N_d(T_n)$ and $\E N_d(\tln)$;
note that $\E N_d(T_n)/n$ and $\E N_d(\tln)/n$
are the probabilities that a given (or random) vertex in the random
graph $T_n$ or $\tln$ has degree $d$.
By symmetry under complementation,
\begin{equation*}
  N_d(T_n)\eqd N_{n-1-d}(T_n)
\qquad\text{and}\qquad
  N_d(\tln)\eqd N_{n-1-d}(\tln).
\end{equation*}

Let us first look at $N_0$, the number of isolated vertices. 
(By symmetry, we have the same results for $N_{n-1}$, the
number of dominating vertices). 
Note that, for every $n\ge2$,
$\P(N_0(T_n)=0)=\P(N_0(\tln)=0)=1/2$ by symmetry.

\begin{theorem}
  \label{TQ1}
  \begin{thmenumerate}
\item
For any $n\ge1$,
\begin{equation}
  \label{tq1u}
\P\bigpar{N_0(T_n)=j} =
  \begin{cases}
	2^{-j-1}, & 0\le j\le n-2, \\
0, & j=n-1, \\
2^{-n+1}, & j=n.
  \end{cases}
\end{equation}
In other words, if $X\sim\Ge(1/2)$, then
$N_0(T_n)\eqd X_n'$, where $X_n'\=X_n$ if
$x<n-1$ and $X'_n\=n$ if $X_n\ge n-1$. 
Furthermore, $\E N_0(T_n)=1$, and
$N_0(T_n)\dto\Ge(1/2)$ as \ntoo, with convergence of all moments.
\item
$\P\bigpar{N_0(\tln)=j}=t(n,j)/t(n)$, where
$t(n,j)$ is given by \eqref{tnj}; in particular, if\/
$0\le j\le n-2$, then
\begin{equation*}
  \P\bigpar{N_0(\tln)=j}=
\frac1{2j!}\frac{t(n-j)/(n-j)!}{t(n)/n!}
=\frac1{2j!}(\log 2)^j\bigpar{1+O\xpar{\rho^{n-j}}}
\end{equation*}
with $\rho=\log2/(2\pi)\approx0.11$.
Hence, $N_0(\tln)\dto\Po(\log 2)$  
as \ntoo{}
with convergence of all moments; in particular,
$\E N_0(\tln)\to\log 2$. 
  \end{thmenumerate}
\end{theorem}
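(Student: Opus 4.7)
For part (i), I would use the bijection from \refSS{SSunlabeled} between $\cT_n$ and the binary codes $\set{0,1}^{n-1}$, under which the uniform distribution on $\cT_n$ corresponds to i.i.d.\ fair coin flips on $(\alpha_2,\dots,\alpha_n)$. The key structural observation is that $N_0(T_n)$ is determined by the location of the final $1$ in the code: if all $\alpha_i$ are zero then every vertex is isolated and $N_0=n$; while if $k=\max\set{i:\alpha_i=1}$ exists then vertex $k$ is added as dominating so vertices $1,\dots,k$ are all non-isolated, whereas vertices $k+1,\dots,n$ are added as isolated with no later dominating vertex to attach them, giving $N_0=n-k$. This immediately rules out the value $n-1$, and translating into the geometric distribution of the position of the last $1$ yields the formulas in \eqref{tq1u}.

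To prove the exact mean identity $\E N_0(T_n)=1$ I would split
\begin{equation*}
  \E N_0(T_n) = \sum_{j=0}^{n-2} j\cdot 2^{-j-1} + n\cdot 2^{-n+1}
\end{equation*}
and compute the tail $\sum_{k\ge n-1} k\cdot 2^{-k-1}=n\cdot 2^{-n+1}$ via a direct re-indexing $k\mapsto k+n-1$, so that the atom at $n$ exactly cancels the truncation of the full $\Ge(1/2)$ mean $1$. Weak convergence to $\Ge(1/2)$ is immediate from pointwise convergence of probabilities, and moment convergence follows from uniform domination: $\P(N_0(T_n)=j)\le 2^{-j-1}$ for $j\le n-2$, while $n^m 2^{-n+1}\to 0$ for any fixed $m$.

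For part (ii), $\P(N_0(\tln)=j) = t(n,j)/t(n)$ by definition, and rewriting with \eqref{tnj} gives the explicit form as a ratio of $t(m)/m!$. To extract the limit with the error $O(\rho^{n-j})$ I would substitute \eqref{tn} separately into numerator and denominator, writing $t(m)/m! = A(\log 2)^{-m}(1+\delta_m)$ with $A=(\log 2)^{-1}-1$ and $\delta_m=O(\rho^m)$; the prefactors telescope and the ratio simplifies to $(\log 2)^j\bigpar{1+O(\rho^{n-j})}$. Dividing by $2j!$ and recognising $e^{-\log 2}=1/2$ identifies the $\Po(\log 2)$ mass function, giving $N_0(\tln)\dto\Po(\log 2)$.

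For convergence of all moments, the cleanest route is via factorial moments: using $j^{\underline{k}}\binom{n}{j}=n^{\underline{k}}\binom{n-k}{j-k}$ together with the identity $\sum_{j=0}^{n'-2}\binom{n'}{j}t(n'-j)=2(t(n')-1)$ (which follows from $\sum_j t(n',j)=t(n')$ and the exceptional values in \eqref{tnj}), the sum telescopes to the exact formula $\E N_0(\tln)^{\underline{k}} = n^{\underline{k}}\,t(n-k)/t(n)$. One more application of \eqref{tn} shows this tends to $(\log 2)^k$, the $k$-th factorial moment of $\Po(\log 2)$; since the Poisson law is determined by its moments, convergence of all ordinary moments of $N_0(\tln)$ follows, with the $k=1$ case giving $\E N_0(\tln)=nt(n-1)/t(n)\to\log 2$. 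The main obstacle is bookkeeping: both the exact identity $\E N_0(T_n)=1$ and the exact factorial-moment formula depend on the exceptional values $j=n-1,n$ cancelling cleanly, so those boundary terms must be tracked carefully in every step.
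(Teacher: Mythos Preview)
Your proposal is correct and, for part (i) and the distributional part of (ii), follows essentially the same route as the paper: the paper also observes that $N_0=j$ means the extended code ends in exactly $j$ zeros, reads off \eqref{tq1u} from the uniform distribution of the code, and for (ii) plugs \eqref{tnj} and \eqref{tn} into $t(n,j)/t(n)$ to identify the Poisson limit.

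Where you diverge is in the moment convergence for part (ii). The paper argues indirectly: the same asymptotic estimate $\P(N_0(\tln)=j)=\frac{1}{2j!}(\log 2)^j(1+O(\rho^{n-j}))$ yields a uniform (in $n$) bound on $\P(N_0(\tln)=j)$ of Poisson type, hence uniform integrability of every power $N_0(\tln)^m$, and then moment convergence follows from convergence in distribution. Your route via the exact identity
\[
\E N_0(\tln)^{\underline{k}}=\frac{n^{\underline{k}}\,t(n-k)}{t(n)}
\]
is different and arguably cleaner: it gives an exact formula for every $n$ (in particular the closed form $\E N_0(\tln)=n\,t(n-1)/t(n)$), and a single application of \eqref{tn} then delivers both the limit $(\log 2)^k$ and the error term. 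The trade-off is that your argument needs the combinatorial identity $\sum_{j=0}^{n'-2}\binom{n'}{j}t(n'-j)=2(t(n')-1)$ and careful tracking of the boundary terms $j=n-1,n$, whereas the paper's uniform-integrability step is a one-line remark once the pointwise asymptotics are in hand. Both approaches are sound; yours extracts more exact information, the paper's is shorter given what has already been proved.
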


\begin{proof}
\pfitem{i}
  A \tg{} has $j$ isolated vertices if and only if the
  extended binary code $\ga_1\dotsm\ga_n$ in
  \refS{Suniform} ends with exactly $j$ 0's.
For a random unlabeled \tg{} $T_n$, the binary code
  $\ga_2\dotsm\ga_n$ is uniformly distributed, and thus
  \eqref{tq1u} follows.
The remaining assertions follow directly.

\pfitem{ii}
In the labeled case, the exact distribution is given by
\eqref{tnj}, and the asymptotics follow by
\eqref{tn}.
Uniform integrabilit of any power $N_0(\tln)^m$ follows by
the same estimates, and thus moment convergence holds.
\end{proof}

For higher degrees, we begin with an exact result for the unlabeled
case.

\begin{theorem}  \label{TEdegrees}
 $ \E N_d(T_n)=1$ for every $d=0,\dots,n-1$.
\end{theorem}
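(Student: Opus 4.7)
My plan is to prove this by induction on $n$, exploiting the recursive structure of the binary coding from \refSS{SSunlabeled}. Recall that $T_n$ corresponds to a uniformly chosen code $\ga_2 \dotsm \ga_n \in \{0,1\}^{n-1}$, so $\ga_n$ is $\Be(1/2)$ independent of the rest, and the graph obtained by deleting vertex $n$ is distributed as $T_{n-1}$ (as an unlabeled graph).

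The recursion is as follows. If $\ga_n = 0$, vertex $n$ is isolated, so it has degree $0$ and no other vertex degree is affected; hence $N_d(T_n) = N_d(T_{n-1}) + \mathbf{1}[d=0]$. If $\ga_n = 1$, vertex $n$ dominates, so it has degree $n-1$ and every other vertex has its degree increased by $1$; hence $N_d(T_n) = N_{d-1}(T_{n-1}) + \mathbf{1}[d=n-1]$. Writing $a_{n,d} := \E N_d(T_n)$ and conditioning on $\ga_n$, I obtain
\begin{equation*}
a_{n,d} = \tfrac{1}{2} a_{n-1,d} + \tfrac{1}{2} a_{n-1,d-1} + \tfrac{1}{2}\mathbf{1}[d=0] + \tfrac{1}{2}\mathbf{1}[d=n-1],
\end{equation*}
with the convention $a_{m,e} = 0$ whenever $e \notin \{0,\dots,m-1\}$.

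The base case $n = 1$ is trivial, since $T_1$ is a single vertex of degree $0$. For the inductive step, assume $a_{n-1,d} = 1$ for $0 \le d \le n-2$. I would then split into three cases according to the value of $d$: the boundary $d = 0$, the boundary $d = n-1$, and the interior $1 \le d \le n-2$. In each case, exactly two of the four terms on the right-hand side contribute $1/2$ (one term from the inductive hypothesis, one from the indicator), and the remaining two vanish, yielding $a_{n,d} = 1$.

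I do not expect a serious obstacle; the only point that requires care is checking that the boundary indicators compensate precisely for the inductive terms that vanish at $d = 0$ and $d = n-1$. The underlying mechanism --- that peeling off the last vertex of $T_n$ yields $T_{n-1}$ --- is exactly what was used in \refT{TQ1}(i) for $N_0$, and here it propagates cleanly to every degree $d$.
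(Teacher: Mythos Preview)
Your inductive argument is correct. The recursion you derive is exactly right: conditioning on $\ga_n$, deleting the last vertex yields a copy of $T_{n-1}$ independent of $\ga_n$, and the three cases $d=0$, $d=n-1$, $1\le d\le n-2$ all yield $a_{n,d}=1$ from the inductive hypothesis as you indicate.

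Your route differs from the paper's. The paper does not induct on $n$; instead it computes, for general $p$, the probability generating function $\E x^{d_i}=(px^{i-1}+q)(px+q)^{n-i}$ of the degree of each vertex $i$, sums over $i$ in closed form, and identifies coefficients. For $p=1/2$ the sum collapses to $(1-x^n)/(1-x)$, which immediately gives $\E N_d=1$ for all $d$. The payoff of the paper's approach is that it proves the more general \refT{TEdegreesp} (arbitrary $p$) with no extra work, whereas your induction relies on the symmetry of $p=1/2$ to make the constant sequence $a_{n,d}\equiv 1$ a fixed point of the recursion; extending it to general $p$ would require guessing the answer $\frac{q}{p}+\bigl(\frac{p}{q}-\frac{q}{p}\bigr)\P(\Bin(n,p)\le d)$ in advance and then verifying it satisfies the recursion, which is less natural. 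On the other hand, your argument is entirely elementary and self-contained, avoiding generating functions, and it makes transparent the mechanism (peeling off the last vertex) that drives the identity.
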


Actually, this is the special case $p=1/2$ of a more general theorem for the
random \tg{} $\tnp$ defined in \refSS{SSaddition}:
(Cf.\ \refT{Ttnp}, which is for weak convergence, but on the
other hand yields an \as{} limit while we here study the expectations.)
\begin{theorem}
  \label{TEdegreesp}
Let $0<p<1$.
If $q=1-p$ and $X\sim\Bin(n,p)$, then, for
$0\le d\le n-1$,
\begin{equation*}
\E N_d(\tnp)=\frac qp+\Bigpar{\frac pq - \frac qp}\P(X\le d).
\end{equation*}
\end{theorem}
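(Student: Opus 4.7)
The plan is to compute the ordinary generating function
$F(s):=\sum_{d\ge0}\E N_d(\tnp)s^d$
in closed form and read off coefficients.

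First, condition on the random choices made during construction: let $\eps_2,\dots,\eps_n$ be \iid{} $\Be(p)$ variables with $\eps_k=1$ meaning vertex $k$ was added as dominating, and set $D_k\=\sum_{j=k+1}^n\eps_j\sim\Bin(n-k,p)$, independent of $\eps_k$. Vertex $1$ has degree $D_1$, while for $k\ge2$ vertex $k$ has degree $(k-1)\eps_k+D_k$. The individual probability generating functions are therefore
\begin{equation*}
f_1(s)=(q+ps)^{n-1},\qquad f_k(s)=(q+ps^{k-1})(q+ps)^{n-k}\quad(k\ge2).
\end{equation*}

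Second, since $\E N_d(\tnp)=\sum_{k=1}^n\P(\deg(k)=d)$, we have $F(s)=\sum_{k=1}^n f_k(s)$. The two resulting geometric sums $\sum_{k=2}^n q(q+ps)^{n-k}$ and $\sum_{k=2}^n ps^{k-1}(q+ps)^{n-k}$ collapse via the identities $1-(q+ps)=p(1-s)$ and $(q+ps)-s=q(1-s)$. Placing everything over the common denominator $pq(1-s)$ and observing that the coefficient multiplying $(q+ps)^{n-1}$ reduces to $(p-q)(q+ps)$ produces the compact form
\begin{equation*}
F(s)=\frac{(p-q)(q+ps)^n+q^2-p^2s^n}{pq(1-s)}.
\end{equation*}

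Third, expand each summand as a power series in $s$. Writing $X\sim\Bin(n,p)$, the identity $(q+ps)^n=\sum_j\P(X=j)s^j$ combined with $1/(1-s)=\sum_d s^d$ yields $(q+ps)^n/(1-s)=\sum_d\P(X\le d)s^d$; the term $s^n/(1-s)$ contributes nothing for $d<n$. Extracting $[s^d]F(s)$ for $0\le d\le n-1$ gives
\begin{equation*}
\E N_d(\tnp)=\frac{q}{p}+\frac{p-q}{pq}\P(X\le d),
\end{equation*}
and the identity $\frac{p-q}{pq}=\frac{p^2-q^2}{pq}=\frac{p}{q}-\frac{q}{p}$ (using $p+q=1$) delivers the stated formula.

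The main obstacle is the algebraic consolidation of the three geometric pieces into the single rational form for $F(s)$; careful bookkeeping with the two geometric sums is essential, and the cancellation of the apparent pole at $s=1$ (which amounts to the sanity check $F(1)=n$, the total number of vertices) provides a useful verification along the way.
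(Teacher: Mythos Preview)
Your proof is correct and follows essentially the same route as the paper: compute the probability generating function of each vertex degree as $(q+ps^{k-1})(q+ps)^{n-k}$, sum over $k$, collapse the two geometric sums, and read off the coefficient of $s^d$. The only cosmetic difference is that the paper writes the closed form as $\bigl((q/p)+(p/q-q/p)(px+q)^n-(p/q)x^n\bigr)/(1-x)$, which becomes your expression after multiplying numerator and denominator by $pq$.
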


\begin{proof}
We use the definition in \refSS{SSaddition}. (For
the uniform case
  $p=1/2$, this is \refAlg{Alg1}.) 
  Let $d_i$ be the degree of vertex $i$. Then, if
  $\ga_1\dotsm\ga_n$ is the extended binary code of the graph,
we have
  \begin{equation*}
d_i=(i-1)\ga_i+\sum_{j=i+1}^n\ga_j.	
  \end{equation*}
Since the $\ga_i$ are \iid{} $\Be(p)$ for
$i=2,\dots,n$, the probability generating function of
$d_i$ is
\begin{equation*}
  \E x^{d_i} = \E
  x^{(i-1)\ga_i}\prod_{j=i+1}^n\E x^{\ga_j}
=(p x^{i-1}+q)(px+q)^{n-i}.
\end{equation*}
Consequently,
\begin{equation*}
  \begin{split}
\sum_{d}\E N_d(\tnp)x^d
&=\sum_{i=1}^n\E x^{d_i}
=\sum_{i=1}^npx^{i-1}(px+q)^{n-i}+\sum_{i=1}^nq(px+q)^{n-i}
\\&
=p\frac{x^n-(px+q)^n}{x-(px+q)}+q\frac{1-(px+q)^n}{1-(px+q)}
\\&
=\frac{(q/p)+(p/q-q/p)(px+q)^n-(p/q)x^n}{1-x}.
  \end{split}
\end{equation*}
In the special case $p=1/2$, this is
$(1-x^n)/(1-x)=\sum_{d=0}^{n-1}x^d$, which shows
\refT{TEdegrees} by identifying coefficients.
For general $p$, \refT{TEdegreesp} follows in the same way.
\end{proof}

Recall that $R_d$ denotes the number of preferential
arrangements, or surjection numbers, given in \eqref{rn}.
\begin{theorem}\label{Tend}
  \begin{thmenumerate}
\item
In the unlabeled case, for any sequence $d=d(n)$ with
$0\le d\le n-1$,
$N_d(T_n)\dto\Ge(1/2)$ with convergence of all moments.	
\item
In the labeled case,
let $X_d$, $0\le d\le\infty$, have the modified
Poisson distribution given by
\begin{equation*}
  \P(X_d=\ell)=
  \begin{cases}
\frac{\gamd}{\log2}\P\bigpar{\Po(\log2)=\ell}
 =\gamd\frac{(\log2)^{\ell-1}}{2\cdot\ell!}, & \ell\ge1, 
\\
1-\frac{\gamd}{2\log2},&\ell=0,
  \end{cases}
\end{equation*}
where $\gam_0\=\log 2$, $\gamd\=2R_d(\log2)^{d+1}/d!$ 
for $d\ge1$, and $\gamoo\=1$.
Then, for every fixed $d\ge0$, 
$N_d(\tln)\eqd N_{n-1-d}(\tln)\dto X_d$,
and for every sequence $d=d(n)\to\infty$ with
$n-d\to\infty$, $N_d(\tln)\eqd
N_{n-1-d}(\tln)\dto X_\infty$ as \ntoo, in both cases with
convergence of all moments.

In particular, $\E N_d(\tln)=\E
N_{n-1-d}(\tln)$ converges to $\gamd$ for every
fixed $d$, and to $\gamoo=1$ if\/
$d\to\infty$ and $n-d\to\infty$.
  \end{thmenumerate}
\end{theorem}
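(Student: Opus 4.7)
The approach hinges on the block decomposition from \refSS{SSunlabeled} together with the probabilistic description of block lengths in \refT{Tblock}. Recall that vertices in the same block share a common degree while vertices in distinct blocks have distinct degrees, so for every $d$ there is at most one block whose vertices have degree $d$, and $N_d(G_n)$ equals the size of that block (or $0$ if no such block exists). This reduces the problem to analysing the block-length sequence.

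First I would generate $G_n$ via \refT{Tblock}, writing the block lengths as $(B_1,\dots,B_\elm)$ with $B_j\eqd(\BB\mid\BB\ge1)$ for $j\ge2$ and $B_1\eqd(\BB\mid\BB\ge2)$, independent, conditioned on $\sum_k B_k=n$, where $\BB\sim\Ge(1/2)$ for $G_n=T_n$ and $\BB\sim\Po(\log 2)$ for $G_n=\tl_n$. By complementation symmetry, the last block is isolated or dominating with probability $\tfrac12$ each. Writing $\tilde B_j\=B_{\elm-j+1}$, a short bookkeeping exercise shows that in case~A (last block isolated) the isolated blocks from the end inward have degrees $0,\tilde B_2,\tilde B_2+\tilde B_4,\dots$ and sizes $\tilde B_1,\tilde B_3,\tilde B_5,\dots$, while in case~B (last block dominating) the isolated-block degrees are $\tilde B_1,\tilde B_1+\tilde B_3,\dots$ and sizes $\tilde B_2,\tilde B_4,\dots$; the small-$d$ regime is captured in both cases by isolated blocks, and the complementation symmetry $N_d(G_n)\eqd N_{n-1-d}(G_n)$ reduces high degrees to low. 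Consequently $N_d(G_n)>0$ iff the corresponding renewal walk hits~$d$, and on that event $N_d(G_n)$ equals a block size $\tilde B_j$ whose index $j$ is strictly larger than every index appearing in the walk, hence (by independence of the $\tilde B_j$'s) is distributed as $G\=(\BB\mid\BB\ge1)$, independently of the walk trajectory.

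Next I would compute the hitting probability $u_d$. For blocks at bounded distance from the end, the conditioning $\sum_k B_k=n$ is asymptotically irrelevant as \ntoo, since $\P(\sum_k B_k=n)=\Theta(n^{-1/2})$ by a local limit theorem; more careful bookkeeping shows that the first $o(\sqrt n)$ reversed block sizes are, conditionally on $\sum_k B_k=n$, asymptotically \iid{} copies of $G$. The renewal generating function $U(x)\=\sum_d u_dx^d$ satisfies $U(x)=1/(1-\E x^G)$. In the unlabeled case $\E x^G=x/(2-x)$, giving $U(x)=(2-x)/(2(1-x))$ and hence $u_d=\tfrac12$ for every $d\ge1$. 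In the labeled case $\E x^G=e^{x\log 2}-1$, giving $U(x)=1/(2-e^{x\log 2})$, which by \eqref{rgen} equals $\sum_d R_d(\log 2)^dx^d/d!$, so $u_d=R_d(\log 2)^d/d!=\gamma_d/(2\log 2)$. For $d=d(n)\to\infty$ with $n-d(n)\to\infty$, Blackwell's renewal theorem gives $u_d\to1/\E G$, which equals $\tfrac12$ (unlabeled) and $1/(2\log 2)=\gamma_\infty/(2\log 2)$ (labeled). Combined with the independence observation above and the pmf of $G$ (namely $2^{-k}$ for $k\ge1$ in the unlabeled case, and $(\log 2)^k/k!$ for $k\ge1$ in the labeled case), this yields the claimed limiting distributions of $X_d$ in both parts.

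Finally, moment convergence follows from uniform integrability: on $\{N_d>0\}$ the value $N_d$ is distributed as $G$, which has exponential tails in both models, so $\sup_{n,d}\E N_d(G_n)^m<\infty$ for every $m$. The principal technical obstacle lies in the preceding paragraph: passing from the unconditional renewal process to the conditional one when $d=d(n)\to\infty$ requires a uniform local-limit estimate on $\P\bigpar{\sum_{k=1}^{K_n}B_k=m\mid\sum_{k=1}^\elm B_k=n}$ that remains valid as $K_n$ and $m$ grow with $n$ up to the hitting scale of~$d$. Such estimates are standard for geometric and Poisson summands but must be executed carefully so that the exact formulas for $u_d$ derived from the unconditional renewal equation transfer to the conditional model; once this transfer is in place, the rest is routine.
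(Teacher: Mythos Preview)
Your approach via the block decomposition of \refT{Tblock} and renewal theory is the paper's route, and your generating-function computation of $u_d$ matches the paper's exactly. The time reversal $\tilde B_j=B_{\elm-j+1}$ is a clean way to expose the renewal structure for fixed $d$. One correction: the conditioning event $\{\text{the renewal process hits }n\}$ has probability converging to $1/\E B_2>0$ by the renewal theorem, not $\Theta(n^{-1/2})$; this is precisely what makes the conditioning asymptotically irrelevant for events depending on finitely many end-blocks, so your conclusion there is right even though the stated reason is not.

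The real gap is the case $d=d(n)\to\infty$. When $d$ is of order $n$ your reversed walk uses a positive fraction of the blocks, and the constraint $\sum_k B_k=n$ is no longer negligible; an appeal to Blackwell for the unconditional process, or to asymptotic independence of $o(\sqrt n)$ reversed blocks, does not cover this regime. The paper works forwards, writes $\P(N_d=\ell)=p_d(\ell)+p_{n-1-d}(\ell)$ with $p_d(\ell)$ the isolated-block contribution, and shows via a CLT argument that
\[
p_d(\ell)=\mu^{-1}\P(B_2=\ell)\,\Phi\bigl((n-2d)/\sqrt{n/2}\bigr)+o(1);
\]
the two contributions then combine through $\Phi(x)+\Phi(-x)=1$ to give the limit $\mu^{-1}\P(B_2=\ell)$ uniformly over the whole range, including $d\sim n/2$. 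Your sketch would need an analogous argument showing that the conditional hitting probabilities at $d$ and at $n-1-d$ sum to $\mu^{-1}+o(1)$; this is where the substance lies, and ``standard local-limit estimates'' understates it.
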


In the labeled case we thus have, in particular, 
$\E N_0(\tln)\to\log2\approx0.69315$,
$\E N_1(\tln)\to2(\log2)^2\approx0.96091$,
$\E N_2(\tln)\to3(\log2)^3\approx0.99907$,
$\E N_3(\tln)\to\frac{13}3(\log2)^4\approx1.00028$.
The values for degrees 0 and 1 (and symmetrically $n-1$ and
$n-2$) are thus substantially smaller than 1, which is clearly
seen in \refF{fig:exgr5}. 
(We can regard this as an edge effect; the vertices with degrees close
to 0 or $n-1$ are the ones added last in
\refAlg{Alg3}. 
\refF{fig:exgr5} also shows an edge effect at the other side; there is
a small bump for degrees arond $n/2$, which correspond to the
vertices added very early in the algorithm; this bump vanishes
asymptotically, as shown by \refT{Tend}; we believe that it
has height of order $n\qqw$ and width of order
$n\qq$, but we have not analyzed it in detail.)

\begin{proof}
  The cases $d=0$ and $d=n-1$ follow from
  \refT{TQ1}. We may thus suppose $1\le d\le n-2$.
We use \refAlg{Alg3}.
We know that vertices in each block have the same degree, while
  different blocks have different degrees; thus there is at most one
  block with degrees $d$.

Let $p_d(\ell)$ be the probability that there is such a block
of length $\ell\ge1$, and that this block is added as
isolated. By symmetry, the probability that there is a dominating
block of length $\ell$ with degrees $d$ is
$p_{n-1-d}$ and thus
\begin{equation}
  \label{p2}
\P(N_d=\ell)=p_d(\ell)+p_{n-1-d}(\ell),
\qquad \ell\ge1.
\end{equation}

If block $j$ is an isolated block, then the degree of the
vertices in it equals the number of vertices added as dominating after
it, \ie,
$B_{j+1}+B_{j+3}+\dots+B_{j+2k-1}$, if the
total number $\tau$ of blocks is $j+2k-1$ or $j+2k$.
Consequently, there is an isolated block of length $\ell$
with vertices of degree $d$ if and only if there exist
$j\ge 1$ and $k\ge1$ with
\begin{itemize}
  \item $B_j=\ell$,
\item
block $j$ is isolated,
\item
$\sum_{i=1}^k B_{j+2i-1}=d$,
\item
$\sum_{i=1}^{j+2k-1} B_{i}=n$ or $\sum_{i=1}^{j+2k} B_{i}=n$.
\end{itemize}
Recall that $B_1,B_2,\dots$ are independent and that
$B_2,B_3,\dots$ have the same distribution while $B_1$
has a different one. (The distributions differ between the unlabeled
and labeled cases.) Let
\begin{equation*}
  \hS_n\=\sum_{i=1}^m B_i
\qquad\text{and}\qquad
  S_n\=\sum_{i=1}^m B_{i+1}.
\end{equation*}
Further, let 
\begin{align*}
  \hu(n)&=\sum_{m=0}^\infty  \P(\hS_m=n)
=\P(B_\tau=n)=\P\Bigpar{\sum_{i=1}^\tau  B_i=n},
\\
u(n)&=\sum_{m=0}^\infty  \P(S_m=n),
\end{align*}
and recall that $u(n),\hu(n)\to1/\mu\=1/\E B_2$
(exponentially fast) by standard renewal theory (for example by
considering generating functions).
For any $m\ge j+2k-1$,
\begin{equation*}
  \sum_{i=1}^m B_i - B_j - \sum_{i=1}^k  B_{j+2i-1}
\eqd
  \begin{cases}
S_{m-1-k}, & j=1,\\
\hS_{m-1-k}, & j\ge2,
  \end{cases}
\end{equation*}
and it follows that, since $B_1,B_2,\dots$ are
independent and we condition on $\hS_\tau=n$,
\begin{multline}
  \label{p4}
p_d(\ell)=
\frac{1}{2\hu(n)}	
\Bigl\{\sum_{k=1}^\infty\P(B_1=\ell)\P(S_k=d)
\\\shoveright{\cdot\bigpar{\P(S_{k-1}=n-\ell-d)+\P(S_k=n-\ell-d)}}
\\
\shoveleft{\quad\qquad+\sum_{j=2}^\infty\sum_{k=1}^\infty\P(B_j=\ell)\P(S_k=d)}
\\
\cdot\bigpar{\P(\hS_{j+k-2}=n-\ell-d)+\P(\hS_{j+k-1}=n-\ell-d)}
\Bigr\}
\end{multline}
In the double sum, $\P(B_j=\ell)=\P(B_2=\ell)$ does
not depend on $j$, so the sum is at most 
\begin{equation*}
  \begin{split}
\P(B_2=\ell)\sum_k\P(S_k=d)2\hu(n-\ell-d)
&=2\P(B_2=\ell)u(d)\hu(n-\ell-d)
\\&=O(\P(B_2=\ell)).	
  \end{split}
\end{equation*}
Similarly, the first sum is
$O(\P(B_1=\ell))=O(\P(B_2=\ell))$, and it follows
that 
$p_d(\ell)=O(\P(B_2=\ell))$ and  thus, by \eqref{p2},
  \begin{equation}
	\label{mara}
  \P(N_d=\ell)=O(\P(B_2=\ell)),
  \end{equation}
uniformly in $n$, $d$ and $\ell$. This shows
tightness, so convergence
$\P(N_d=\ell)\to\P(X=\ell)$ for some
non-negative integer valued random variable $X$ and each fixed
$\ell\ge1$ implies convergence in distribution (\ie,
for $\ell=0$ too). Further, since all moments of $B_2$
are finite, \eqref{mara} implies that all moments $\E
N_d^m$ are bounded, uniformly in $d$ and $n$; hence
convergence in distribution implies that all moments converge too.
In the rest of the proof we thus let $\ell\ge1$ be fixed.

If $d\le n/2$ it is easy to see that 
$\P(S_{k-1}=n-\ell-d)+\P(S_{k}=n-\ell-d)=O\bigpar{(n-\ell-d)\qqw}=O(n\qqw)$,
uniformly in $k$,
so the first sum in \eqref{p4} is $O\bigpar{n\qqw u(d)}=O\bigpar{n\qqw}$.
If $d>n/2$, we similarly have $\P(S_k=d)=O\bigpar{d\qqw}=O\bigpar{n\qqw}$
and thus the sum is 
$O\bigpar{n\qqw u(n-\ell-d)}=O\bigpar{n\qqw}$.
Hence \eqref{p4} yields
\begin{multline}\label{p4b}
  p_d(\ell)=O\bigpar{n\qqw}+\frac{\P(B_2=\ell)}{2\hu(n)}
\sum_{k=1}^\infty\P(S_k=d)
\\\cdot
\Bigpar{\sum_{i=k}\P(\hS_i=n-\ell-d)+\sum_{i=k+1}\P(\hS_i=n-\ell-d)}.
 \end{multline}
The term with $i=k$ can be taken twice, just as the ones with
$i>k$,
since
$\sum_k\P(S_k=d)\P(\hS_k=n-\ell-d)=\Onqqw$ by
the same argument as for the first sum in \eqref{p4}.
Further, for $i\ge k$, $\hS_i-\hS_k\eqd
S_{i-k}$ and is independent of $\hS_k$; thus
\begin{equation*}
  \P(\hS_i=n-\ell-d)=\P(S_{i-k}=n-\ell-d-\hS_k)
=
\E\P\bigpar{S_{i-k}=n-\ell-d-\hS_k\mid\hS_k}
\end{equation*}
and
$\sum_{i=k}^\infty \P(\hS_i=n-\ell-d)=\E u(n-\ell-d-\hS_k)$.
Hence, \eqref{p4b} yields
\begin{equation}
  \label{p5}
p_d(\ell)
=\frac{\P(B_2=\ell)}{\hu(n)}
\sum_{k=1}^\infty \P(S_k=d)\E u(n-\ell-d-\hS_k)+\Onqqw.
\end{equation}

If $d$ is fixed, then $\E
u(n-\ell-d-\hS_k)\to\mu\qw$ by dominated convergence
as \ntoo{} for each $k$, and thus \eqref{p5}
yields, by dominated convergence again,
\begin{equation}\label{pdl}
p_d(\ell)
\to \P(B_2=\ell)
\sum_{k=1}^\infty \P(S_k=d)
=u(d) \P(B_2=\ell).
\end{equation}

If $d\to\infty$, 
we use the
fact that $u(m)-\ett{m\ge0}\mu\qw$ is summable
over $\bbZ$ to see that
\begin{equation*}
  \E  u(n-\ell-d-\hS_k)-\mu\qw\P(n-\ell-d-\hS_k\ge0)
=O\bigpar{\max_m\P(\hS_k=m)},
\end{equation*}
which tends to 0 as $k\to\infty$; on the other hand,
$\P(S_k=d)\to0$ for every fixed $k$.
It follows that \eqref{p5} yields
\begin{equation*}
p_d(\ell)
= \P(B_2=\ell)
\sum_{k=1}^\infty \P(S_k=d)\P(\hS_k\le n-\ell-d)+o(1).
\end{equation*}
If $\tau_d\=\min\set{k:S_k\ge d}$, and
$\hS'_k$ denotes a copy of $\hS_k$ independent of $\set{S_j}_1^\infty$,
then
\begin{equation*}
\sum_{k=1}^\infty \P(S_k=d)\P(\hS_k\le n-\ell-d)
=u(d)\P\bigpar{\hS'_{\tau_d}\le n-\ell-d\mid S_{\tau_d}=d}.
\end{equation*}
It is easy to see that, with $\gss\=\Var(B_2)$,
as $d\to\infty$,  
\begin{equation*}
\bigpar{(\hS'_{\tau_d}-d)/\sqrt d\mid S_{\tau_d}=d}
=
\bigpar{(\hS'_{\tau_d}-S_{\tau_d})/\sqrt d\mid S_{\tau_d}=d}
\dto N(0,2\gss/\mu),
\end{equation*}
\cf{} \cite{SJ28} (the extra conditioning on
$S_{\tau_d}=d$ makes no difference).
Hence, when $d\to\infty$,
\begin{equation*}
  \begin{split}
p_d(\ell)
&= \P(B_2=\ell)u(d)\Phi\bigpar{(n-\ell-2d)/\sqrt d}
+o(1).
  \end{split}
\end{equation*}
(By \eqref{pdl}, this holds for fixed $d$ too.)
We next observe that 
$\Phi\bigpar{(n-\ell-2d)/\sqrt d}=\Phi\bigpar{(n-2d)/\sqrt{
	n/2}}+o(1)$; this is easily seen by considering separately the three
cases $d/n\to a\in[0,1/2)$, $d/n\to a\in(1/2,1]$,  and
$d/n\to 1/2$ and $(n-2d)/\sqrt{n/2}\to b\in[-\infty,\infty]$
(the general case follows by considering suitable subsequences).
Hence, we have when $d\to\infty$, recalling that then $u(d)\to\mu\qw$,
\begin{equation*}
  \begin{split}
p_d(\ell)
= \mu\qw\P(B_2=\ell)\Phi\bigpar{(n-2d)/\sqrt {n/2}}+o(1).
  \end{split}
\end{equation*}

For fixed $d$, this implies that
$p_{n-d-1}(\ell)\to0$, and thus \eqref{p2} and
\eqref{pdl} yield
\begin{equation*}
  \begin{split}
\P(N_d=\ell)
&=
p_d(\ell)+p_{n-1-d}(\ell)
=u(d)\P(B_2=\ell)+o(1).
  \end{split}
\end{equation*}

Similarly, if $d\to\infty$ and
$n-d\to\infty$, 
\begin{equation*}
  \begin{split}
\P&(N_d=\ell)
=
p_d(\ell)+p_{n-1-d}(\ell)
\\&
= \mu\qw\P(B_2=\ell)
\bigpar{
\Phi\bigpar{(n-2d)/\sqrt{n/2}}
+
\Phi\bigpar{(2d+2-n)/\sqrt{n/2}}}
+o(1)
\\&
=\mu\qw\P(B_2=\ell)+o(1).
  \end{split}
\end{equation*}

We have thus proven convergence as \ntoo, with all moments, 
$N_d\dto X_d$ for fixed $d$
and $N_d\dto X_\infty$ for $d=d(n)\to\infty$
with $n-d\to\infty$, where 
\begin{align}
\P(X_d=\ell)&=u(d)\P(B_2=\ell)
=2u(d)\P(B^*=\ell)
, \qquad  \ell\ge1, \label{p7a}\\
\P(X_d=0)&=1-\P(X_d\ge1)=1-u(d), \label{p7b}
\end{align}
for $1\le d\le\infty$, 
with $u(\infty)\=\mu\qw$.

In the unlabeled case, $B_2=(B^*\mid B^*\ge1)\eqd B^*+1$
with $B^*\sim\Ge(1/2)$.
Consider a random infinite string $\ga_1\ga_2\dotsm$ of
\iid{} $\Be(1/2)$ binary digits, and define a block
as a string of $m\ge0$ 0's followed by a single 1. Then
$B_{j+1}$, $j\ge1$, can be interpreted as the
successive block lengths in $\ga_1\ga_2\dotsm$, and
thus $u(d)$ is the probability that some block ends at
$d$,
\ie, $u(d)=\P(\ga_d=1)=1/2$, for every
$d\ge1$. It follows from \eqref{p7a}--\eqref{p7b}
that $X_d\eqd B^*\sim\Ge(1/2)$ for every
$d\ge1$, and (i) follows.

In the labeled case, when $B^*\sim\Po(\log2)$, we use generating functions:
\begin{equation*}
  \begin{split}
  \sum_{d=0}^\infty u(d)x^d 
&= \sum_{k=0}^\infty \E x^{S_k}
= \sum_{k=0}^\infty \bigpar{\E x^{B_2}}^k
=\frac1{1-\E x^{B_2}}	
=\frac{\P(B^*\ge1)}{1-\E x^{B^*}}	
\\&
=\frac{1/2}{1-e^{(x-1)\log2}}
=\frac1{2-e^{x\log2}}
=\sum_{d=0}^\infty \frac{R_d}{d!}(x\log2)^d,
  \end{split}
\end{equation*}
where we recognize the gererating function \eqref{rgen}.
Thus, $u(d)=R_d(\log2)^{d}/d!$. (A direct combinatorial
proof of this is also easy.)

We let, using $\mu\=\E B_2=\E B^*/\P(B^*\ge1)=2\log2$,
$$
\gamd\=\E X_d=u(d)\E B_2=\mu u(d)=2\log 2 u(d)=2R_d(\log2)^{d+1}/d!
$$
and note that $\gamd\to\gamoo=1$ as
$d\to\infty$ since $u(d)\to\mu\qw$, or by
the known asymptotics of $R_d$ \cite[(II.16)]{FS}.
The description of $X_d$ in the statement now follows from
\eqref{p7a}--\eqref{p7b}. 
\end{proof}

\section{Random bipartite threshold graphs}\label{SrandomB} 

The constructions and results in \refS{Srandom} have analogues for
\btg{s}. The proofs are simple modifications of the ones above and are
omitted.

\subsection{Increasing set}\label{SSsetB}
  For any  increasing $S\subseteq\oii$, 
  define $\tsnn\=G(n_1,n_2,\etta_S)$.
In other words,  take \iid{} random variables
  $U_1',\dots,U_{n_1}',\allowbreak U_1'',\dots,U_{n_2}''\sim U(0,1)$  
and  draw an edge $ij$ if $(U_i',U_j'')\in S$.

  \begin{theorem}\label{TtsnB}
	As \nntoo, $\tsnn\asto\gG_S''$. In particular,
the degree distribution
$\nu_1(\tsn)\asto\nu_1(\ggb_S)$, which equals the
distribution of $\gf_S(U)$ defined by \eqref{ttsn}.
\nopf
  \end{theorem}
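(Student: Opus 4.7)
The plan is to mirror the proof of Theorem \ref{Ttsn} almost verbatim, substituting at each step the bipartite analogue that has already been set up in \refS{Sprel} and \refS{Sdeg}.

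First, I would invoke the bipartite counterpart of the fact that $G(n,W)\asto\gG_W$, namely that $G(n_1,n_2,W)\asto\gG''_W$ in $\cbq$ as $n_1,n_2\to\infty$. This is the bipartite version of the basic random-graph-converges-to-its-generating-graphon result; it is the direct analogue of what is recalled in \refSS{SSgraphlim} and sits on top of \eqref{tuww} via the standard second-moment / Borel--Cantelli argument used in \cite{LSz,BCL1,DJ}. Applied with $W=\etta_S$, this gives $\tsnn=G(n_1,n_2,\etta_S)\asto \ggb(\etta_S)=\gG''_S$, which is the first assertion.

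Next, because $\nu_1:\cbq\to\cP$ is continuous (this is precisely the bipartite half of \refT{Tdegree}), almost sure convergence $\tsnn\asto\gG_S''$ in $\cbq$ yields $\nu_1(\tsnn)\asto\nu_1(\ggb_S)$ in $\cP$. This is the second assertion.

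Finally, to identify $\nu_1(\ggb_S)$ explicitly, I would apply the bipartite part of \refT{Tdw} with $W=\etta_S$: it says that $\nu_1(\ggb_W)$ is the distribution of $\intoi W(U,y)\dd y$ with $U\sim U(0,1)$. For $W=\etta_S$ this integral is exactly
\[
\intoi \etta_S(U,y)\dd y = \bigabs{\set{y\in\oi:(U,y)\in S}} = \gf_S(U),
\]
and (conditionally on $U$) this also equals $\P\bigpar{(U,U')\in S\mid U}$ with $U'\sim U(0,1)$ independent of $U$. This matches the definition \eqref{ttsn} and completes the proof.

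No real obstacle arises: the only genuine input beyond what is already proved in the paper is the almost-sure convergence $G(n_1,n_2,W)\asto\ggww$, which is a routine bipartite adaptation of the symmetric statement in \refSS{SSgraphlim} (and is used elsewhere in this section implicitly); the rest is a straightforward application of the already-established continuity of $\nu_1$ and the already-established integral formula for the degree distribution of a graphon.
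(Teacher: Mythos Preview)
Your proposal is correct and is exactly the ``simple modification'' the paper has in mind: the paper omits the proof entirely (note the \verb|\nopf| and the blanket remark at the start of \refS{SrandomB} that ``the proofs are simple modifications of the ones above and are omitted''), and the argument you give is precisely the bipartite transcription of the proof of \refT{Ttsn}, invoking the bipartite convergence $G(n_1,n_2,W)\asto\ggww$, the continuity of $\nu_1$ from \refT{Tdegree}, and the bipartite part of \refT{Tdw}.
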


As in \refS{Srandom}, this gives a canonical  representation of random
\btg{s} under natural assumptions.

\begin{theorem}\label{TrandB}
  Suppose that $(\Gnn)_{n_1,n_2\ge1}$ are random bipartite threshold
  graphs with   $V_1(\Gnn)=[n_1]$ and 
  $V_2(\Gnn)=[n_2]$ 
such that the distribution of each $\Gnn$ is invariant
  under permutations of $V_1$ and $V_2$ and that the restrictions
(induced subgraphs)
of $G_{n_1+1,n_2}$  and  $G_{n_1,n_2+1}$ 
to  $V(G)$ both have the same distribution as $\Gnn$, for every $n_1,n_2\ge1$.
If further $\nu_1(\Gnn)\pto\mu$ as \nntoo, for some $\mu\in\cP$, then,
for every
  $n_1,n_2$, $\Gnn\eqd\tsxnn{\smu}$.
\nopf
\end{theorem}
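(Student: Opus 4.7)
The plan is to mirror the proof of \refT{Trand} in the bipartite setting. First, from the hypothesis $\nu_1(\Gnn)\pto\mu$ and the standard subsequence trick \cite[Lemma 4.2]{Kallenberg} (every subsequence admits an a.s.-convergent sub-subsequence), together with \refT{TBlim}, I would conclude $\Gnn\pto\ggmub$ in $\cbq$; note that, unlike in the non-bipartite setting, no symmetry on $\mu$ is needed, since \refT{Tboo} already allows every $\mu\in\cP$ as a first-part degree distribution.

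Second, the consistency of the restrictions of $G_{n_1+1,n_2}$ and $G_{n_1,n_2+1}$ to $\Gnn$, applied to the family of edge-indicator variables, lets the Kolmogorov extension theorem \cite[Theorem 6.16]{Kallenberg} produce a single infinite random bipartite graph on $\bbN\cup\bbN$ whose restriction to $[n_1]\cup[n_2]$ is distributed as $\Gnn$ for every $n_1,n_2\ge1$. The hypothesized permutation invariance of each $\Gnn$ lifts to separate exchangeability of this infinite graph under arbitrary finite permutations of the two vertex sets. I would then invoke the Aldous--Hoover representation theorem for separately exchangeable $\set{0,1}$-valued bipartite arrays, in the form used for bipartite graph limits in Diaconis--Janson \cite{DJ}: every such infinite graph has the distribution of $\gwoooo$ for some (a priori random) $W\in\cW$, and its finite restrictions $\gwnn$ converge in probability to the (possibly random) bipartite graph limit $\ggb_W$. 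Combining with the first step forces $\ggb_W=\ggmub$ almost surely, so $W$ may be taken deterministic, and by \refT{Tboo} we may then choose $W=W_\mu=\etta_{\smu}$.

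Finally, to identify $\Gnn$ in law with $\gwnn$ it suffices to observe that the distribution of $\gwnn$ depends on $W$ only through the element $\ggb_W\in\cboo$: indeed, for any bipartite graph $H$ on $[n_1]\cup[n_2]$, the probability $\P(\gwnn=H)$ is, via inclusion--exclusion on induced-subgraph densities, a polynomial in the quantities $t(F,\ggb_W)$ for sub-bipartite-graphs $F\subseteq H$. Hence
\begin{equation*}
\Gnn\eqd G(n_1,n_2,W_\mu)=\tsxnn{\smu},
\end{equation*}
as required. The main substantive input is the Aldous--Hoover representation in the bipartite case; the remainder is a direct translation of the proof of \refT{Trand}, with the uniqueness of the canonical representative supplied by \refT{Tboo}.
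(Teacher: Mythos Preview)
Your proposal is correct and is exactly the bipartite translation of the proof of \refT{Trand} that the paper intends; indeed, the paper omits this proof with the remark that it is a simple modification of the earlier one. Your final paragraph, making explicit that the law of $\gwnn$ depends on $W$ only through $\ggb_W$, spells out a step that the paper leaves implicit in the proof of \refT{Trand}, but the argument is otherwise identical in structure.
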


\subsection{Random weights}\label{SSweightsB}
Definition \ref{B1.1} suggests the following construction:
\begin{eqenumerate}
\item\label{DRBTG1}
Let $X$ and $Y$ be two random variables and
let $\taux\in\bbR$. Let
$X_1,X_2,\dots,$ be  copies of $X$
and $Y_1,Y_2,\dots,$  copies of $Y$, all independent,
and 
let $\txytnn$ be the bipartite threshold graph with vertex sets
$[n_1]$ and $[n_2]$ and edges $ij$
for all pairs $ij$ such that $X_i+Y_j>\taux$.
  \end{eqenumerate}

\begin{theorem}\label{TkonstB}
Let $S$ be the  increasing set 
  \begin{equation}\label{jeppeB}
	S\=\set{(x,y)\in(0,1]^2:F_X\qw(x)+F_Y\qw(y)>\taux}.
  \end{equation}
Then $\txytnn\eqd\tsnn$ for every $n_1,n_2\ge1$.

Furthermore, as \nntoo, the degree distribution
$\nu_1(\txytnn)\asto\mu$ and thus
$\txytnn\asto\ggb_\mu$, where
$\mu\in\cP$ is the distribution of the random variable
$1-F_Y(\taux-X)$, \ie{} 
\begin{equation}\label{e2fmub}
 \mu[0,s]=\P\bigpar{1-F_Y(\taux-X)\le s},
\qquad s\in\oi.
\end{equation}
\nopf
\end{theorem}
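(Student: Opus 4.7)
The plan is to mirror closely the proof of Theorem \ref{Tkonst}, using the bipartite machinery developed earlier (Theorems \ref{TtsnB} and \ref{TrandB}) in place of their symmetric analogues. The only genuinely new ingredient is that two independent sequences of uniforms now feed two different inverse distribution functions, but this causes no real complication.

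First I would invoke the standard fact that if $U\sim U(0,1)$ then $F_X\qw(U)\eqd X$, and similarly for $Y$. Hence we may realize the construction \ref{DRBTG1} by picking i.i.d.\ uniforms $U_1',\dots,U_{n_1}'$ and $U_1'',\dots,U_{n_2}''$ and setting $X_i\=F_X\qw(U_i')$, $Y_j\=F_Y\qw(U_j'')$. Under this realization,
\begin{equation*}
 ij\in E(\txytnn)\iff F_X\qw(U_i')+F_Y\qw(U_j'')>\taux \iff (U_i',U_j'')\in S,
\end{equation*}
where $S$ is as in \eqref{jeppeB}. Comparing with the construction of $\tsnn$ in \refSS{SSsetB} shows the distributional equality $\txytnn\eqd\tsnn$, which is the first assertion.

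For the limit assertions, I would apply \refT{TtsnB} to the increasing set $S$: it gives $\tsnn\asto\ggb_S$ and identifies the limiting degree distribution $\nu_1(\ggb_S)$ as the law of $\gf_S(U)=\P\bigpar{(U,U')\in S\mid U}$ with $U,U'\sim U(0,1)$ independent. With $X=F_X\qw(U)$ and $X'$ an independent copy of $Y$ realized as $F_Y\qw(U')$, a direct computation gives
\begin{equation*}
\gf_S(U)=\P\bigpar{F_X\qw(U)+F_Y\qw(U')>\taux\mid U}=\P(Y>\taux-X\mid X)=1-F_Y(\taux-X),
\end{equation*}
which is precisely $\mu$ in \eqref{e2fmub}. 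Combined with the distributional identity above, this yields $\txytnn\asto\ggb_\mu$ and $\nu_1(\txytnn)\asto\mu$.

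There is essentially no hard step; the only small subtlety is that $F_X\qw$ and $F_Y\qw$ are not strict inverses (they satisfy $F\qw(U)\eqd X$ only because of the way right-continuous inverses interact with the uniform distribution), but this is exactly the same issue that \refT{Tkonst} handled, and the set $S$ in \eqref{jeppeB} may be replaced up to a null set by its interior or closure as in the remark following \refT{Tkonst}; since $\tsnn$ depends only on $S$ up to Lebesgue-null modifications, no additional care is required. Hence the symmetric proof transfers verbatim with $F_X\qw, F_Y\qw$ in place of the single $F\qw$.
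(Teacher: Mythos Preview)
Your proof is correct and is exactly the simple modification of the proof of \refT{Tkonst} that the paper has in mind; the paper in fact omits the proof entirely, stating at the start of \refS{SrandomB} that the arguments are straightforward adaptations of those in \refS{Srandom}. One small remark: you mention \refT{TrandB} in your plan but never use it, and indeed it is not needed---\refT{TtsnB} alone suffices, just as \refT{Ttsn} sufficed in the non-bipartite case.
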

In the special case when $\P(X\in\oi)=1$, $Y\sim U(0,1)$ and
$\taux=1$, \eqref{e2fmub} yields $\mu[0,s]=\P(X\le s)$, so $\mu$ is
the distribution of $X$; further, the set $S$ in \eqref{jeppeB} is
\aex{} equal to $\smu$ in \eqref{smu2}.

\begin{corollary}
  \label{CkonstB}
If $\mu\in\cps$, let $X$ have distribution $\mu$ and let $Y\sim
U(0,1)$. Then $\txytnn\eqd\tsxnn{\smu}$ for every
$n_1,n_2\ge1$. Furthermore, as \nntoo, $\nu_1(\txytnn)\pto\mu$ and
$\txytnn\pto\ggb_\mu$. 
\end{corollary}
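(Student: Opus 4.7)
The plan is to derive this corollary as a direct specialization of \refT{TkonstB}. I take $X$ with distribution $\mu\in\cps$ and $Y\sim U(0,1)$ independent, with threshold $\taux=1$. Since $F_Y(t)=t$ for $t\in\oi$ and hence $F_Y\qw(y)=y$ for $y\in\oi$, the set $S$ defined in \eqref{jeppeB} becomes
\begin{equation*}
S=\set{(x,y)\in(0,1]^2:F_\mu\qw(x)+y>1}
=\set{(x,y)\in(0,1]^2:\fmui(x)>1-y}.
\end{equation*}
Comparing with \eqref{smu2}, which gives $\smu=\set{(x,y)\in\oii:\fmui(x)+y\ge1}$, we see that $S$ and $\smu$ differ only on the lower-dimensional boundary $\set{(x,y):\fmui(x)+y=1}$ together with the (null) edges of $\oii$. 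Since indicator functions that agree a.e.\ yield the same random graph in the construction of \refSS{SSsetB}, we have $\tsxnn S=\tsxnn{\smu}$ in distribution. Applying \refT{TkonstB} (first part) then gives $\txytnn\eqd\tsxnn S\eqd\tsxnn{\smu}$ for every $n_1,n_2\ge1$, which is the first assertion.

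For the degree distribution, apply the second part of \refT{TkonstB}: the limiting distribution is that of the random variable $1-F_Y(\taux-X)=1-F_Y(1-X)$. Since $X\in\oi$ a.s., we have $1-X\in\oi$ and therefore $F_Y(1-X)=1-X$, whence $1-F_Y(1-X)=X$. Thus the limit distribution is precisely $\mu$, giving $\nu_1(\txytnn)\asto\mu$ and in particular $\nu_1(\txytnn)\pto\mu$.

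The convergence $\txytnn\pto\ggb_\mu$ then follows either by invoking the second assertion of \refT{TkonstB} directly (noting that $\ggb_S=\ggb_{\smu}=\ggb_\mu$ since $S$ and $\smu$ differ by a null set), or alternatively from \refT{TtsnB} applied to $S=\smu$, which yields $\tsxnn{\smu}\asto\ggb(\wmu)=\ggmub$.

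There is no real obstacle here; the only subtle point is checking that $S$ from \eqref{jeppeB} and $\smu$ from \eqref{smu2} agree modulo a null set, which reduces to observing that $\fmui$ takes values in $\oi$ whenever $\mu$ is supported on $\oi$, so no truncation issues arise at the boundary.
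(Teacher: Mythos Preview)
Your proof is correct and follows exactly the approach the paper takes: the paragraph immediately preceding the corollary is the paper's argument, noting that with $X\in\oi$, $Y\sim U(0,1)$, $\taux=1$, formula \eqref{e2fmub} gives $\mu[0,s]=\P(X\le s)$ and the set $S$ in \eqref{jeppeB} is \aex{} equal to $\smu$. One tiny inaccuracy: by \eqref{e2fmui} one has $F_Y\qw(1)=\infty$, not $1$, but this only affects the null edge $\{y=1\}$ and so does not disturb your argument.
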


This yields another canonical construction for every $\mu\in\cP$.
(We claim only convergence in probability in \refC{CkonstB};
convergence \as{} holds at least along every increasing subsequence
$(n_1(m),n_2(m))$, see \cite[Remark 8.2]{DJ}.)

\subsection{Random addition of vertices}\label{SSadditionB}

Definition \ref{B1.2} suggests the following construction:
\begin{eqenumerate}
\item\label{DRBTG2}
Let $\tnnpp$ be the random bipartite threshold graph with $n_1+n_2$
vertices 
obtained as follows: Take $n_1$ `white' vertices and $n_2$ `black'
vertices, and arrange them in random order. Then, join each white
vertex with probability $p_1$ to all earlier black vertices, and join
each black vertex with probability $p_2$ to all earlier white vertices
(otherwise, the vertex is joined to no earlier vertex), the decisions
being made independently by tossing a biased coin once for each white
vertex, and another biased coin once for each black vertex.
  \end{eqenumerate}

Let, for $p_1,p_2\in\oi$, $\mupp$ be the probability measure in $\cP$
with 
distribution function
\begin{equation}
  F_{\mupp}(x)=
  \begin{cases}
\frac{1-p_1}{p_2}x, & 0\le x< p_2,	
\\
1-\frac{p_1}{1-p_2}(1-x), & p_2\le x< 1.
  \end{cases}
\end{equation}
Hence, $\mupp$ has density $(1-p_1)/p_2$ on $(0,p_2)$ and $p_1/(1-p_2)$
on $(p_2,1)$; if $p_2=0$ there is also a point mass $1-p_1$ at 0, and
if $p_2=1$ there is also a point mass $p_1$ at 1.
It follows from \eqref{smu1} that the corresponding subset
$\spp\=S_{\mupp}$ of $\oi^2$ is the quadrilateral with
vertices  $(0,1)$, $(1-p_1,1-p_2)$, $(1,0)$ and $(1,1)$ (including
degenerate cases when $p_1$ or $p_2$ is 0 or 1).

This is an extension of the definitions in
\refSS{SSaddition}; we have $\mu_{p,p}=\mu_p$ and $S_{p,p}=S_p$.
Note also that $\mupp\sss=\mu_{p_2,p_1}$. In particular,
$\mupp\in\cps$ only if $p_1=p_2$.

\begin{theorem}\label{TtnpB}
As \nntoo, the degree distributions $\nu_1(\tnnpp)\pto\mupp$  and
$\nu_2(\tnnpp)\pto\muppz$; consequently,
$\tnnpp\pto \ggbpp\=\ggb_{\mupp}\in\ctboo$. 
\end{theorem}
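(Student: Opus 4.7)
The plan is to realize $\tnnpp$ as a random graph $\gwnn$ sampled from an explicit graphon $\hat W$ and then identify the limit using \refT{Tdw} and \refT{Tboo}. To each white vertex $w_i$ and black vertex $b_j$, associate independent pairs $(U_i,\tilde U_i)$ and $(V_j,\tilde V_j)$ of iid $U(0,1)$ random variables. The first coordinates, when pooled and ranked together, give the uniformly random total order on $V_1\cup V_2$ used in construction \ref{DRBTG2}; the second coordinates are compared with $p_1$ and $p_2$ respectively to decide which vertices are dominating. In these terms, $w_i\sim b_j$ iff either $V_j<U_i$ and $\tilde U_i<p_1$, or $U_i<V_j$ and $\tilde V_j<p_2$; equivalently $\ett{w_i\sim b_j}=\hat W\bigpar{(U_i,\tilde U_i),(V_j,\tilde V_j)}$ for
\begin{equation*}
\hat W\bigpar{(u,\tilde u),(v,\tilde v)}\=\ett{v<u,\,\tilde u<p_1}+\ett{u<v,\,\tilde v<p_2}.
\end{equation*}
After composing with a measure-preserving bijection $\oii\to\oi$ in each coordinate, $\hat W$ becomes a member of $\cW$, and by construction $\tnnpp\eqd\gwnn$ for this graphon.

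From the general theory of \refSS{SSgraphlim} in its bipartite form, $\gwnn\pto\ggb(\hat W)$ as $\nntoo$; since every $\tnnpp$ is a \btg{} by its inductive construction \ref{B1.2}, the limit lies in $\ctboo$. It remains to identify $\ggb(\hat W)$ as $\ggbpp$. By \refT{Tdw}, $\nu_1\bigpar{\ggb(\hat W)}$ is the distribution of
\begin{equation*}
\int_0^1\!\int_0^1 \hat W\bigpar{(U,\tilde U),(v,\tilde v)}\dd v\dd\tilde v = U\cdot\ett{\tilde U<p_1}+(1-U)p_2,
\end{equation*}
where $(U,\tilde U)$ is uniform on $\oii$. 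Conditionally on $\tilde U<p_1$ (probability $p_1$) this equals $p_2+(1-p_2)U$, uniform on $(p_2,1)$; conditionally on $\tilde U\ge p_1$ (probability $1-p_1$) it equals $p_2(1-U)$, uniform on $(0,p_2)$. The resulting mixture has density $(1-p_1)/p_2$ on $(0,p_2)$ and $p_1/(1-p_2)$ on $(p_2,1)$, which is precisely $\mupp$. By \refT{Tboo}, the degree distribution determines the limit, so $\ggb(\hat W)=\ggb_{\mupp}=\ggbpp$; hence $\tnnpp\pto\ggbpp$ and $\nu_1(\tnnpp)\pto\mupp$. The analogous calculation, integrating now over the $(U,\tilde U)$ variables, yields $\nu_2(\tnnpp)\pto\muppz$.

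The step requiring most care is the coupling itself: one must verify that pooling and ranking the $U_i$'s and $V_j$'s really does produce a uniformly random total order on $V_1\cup V_2$, independent of the $\tilde U_i$'s and $\tilde V_j$'s, and that thresholding the $\tilde U_i$'s and $\tilde V_j$'s against $p_1$ and $p_2$ reproduces the independent biased coin flips of \ref{DRBTG2}. Both facts are standard consequences of symmetry and independence of iid uniforms, but they must be checked in order to conclude $\tnnpp\eqd\gwnn$. Once this coupling is in hand, everything else is a routine application of the machinery developed in \refS{Sprel} and \refS{Slim}.
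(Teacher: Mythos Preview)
Your proof is correct. The paper omits its own proof, saying only that it is a ``simple modification'' of the proof of \refT{Ttnp}; that proof is a direct law-of-large-numbers computation of the empirical degree distribution, tracking how many of the first $\floor{nt}$ vertices were added as dominating and reading off $d(i)/n$ from this. Your route is genuinely different: you realize $\tnnpp$ exactly as $\gwnn$ for an explicit (two-variable-per-vertex) graphon $\hat W$, invoke the general bipartite convergence $\gwnn\pto\ggb(\hat W)$, and then identify the limit inside $\ctboo$ via its degree distribution using \refT{Tdw} and the bijection of \refT{Tboo}. The paper's approach is more elementary---it needs only the law of large numbers and a direct integral computation---while yours leans on the graphon machinery but is cleaner and yields, as a by-product, an explicit distributional identity $\tnnpp\eqd G(n_1,n_2,\hat W)$, in the spirit of (though not identical to) \refC{C=B}. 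One small remark: your density description of $\mupp$ tacitly assumes $0<p_2<1$; the boundary cases $p_2\in\{0,1\}$ (point masses at $0$ or $1$, as in the paper's definition of $\mupp$) follow from your formula $U\cdot\ett{\tilde U<p_1}+(1-U)p_2$ just as easily, and it would be worth saying so.
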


\begin{corollary}\label{C=B}
  If $p_1,p_2\in\oi$ and $n_1,n_2\ge1$, then 
  \begin{equation*}
	\tnnpp\eqd \tsxnn{\spp}\eqd\tnnx{X_1,X_2,0},
  \end{equation*}
where $X_j$ has the density $1-p_j$ on $(-1,0)$ and
$p_j$ on $(0,1)$, $j=1,2$. 
\end{corollary}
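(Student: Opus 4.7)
The plan is to mirror the non-bipartite strategy used for \refC{C=} and \refC{C=2}, proving each of the two equalities separately and connecting them via the canonical representation in \refSS{SSsetB}.

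For the first equality $\tnnpp\eqd\tsxnn{\spp}$, I would invoke \refT{TrandB}. The construction \ref{DRBTG2} is manifestly exchangeable in each color class because the total order of the $n_1+n_2$ vertices is chosen uniformly at random and the attachment coins are flipped independently of the labels. To check consistency, observe that removing the white vertex labeled $n_1+1$ from $G_{n_1+1,n_2}$ simply restricts the uniform random order and the independent coin flips to a smaller index set; the edges among surviving vertices depend only on retained data, so the restriction is distributed as $\Gnn$ (and similarly for black vertices). Hence the hypotheses of \refT{TrandB} hold. Combined with \refT{TtnpB}, which gives $\nu_1(\tnnpp)\pto\mupp$, this yields $\tnnpp\eqd\tsxnn{S_{\mupp}}=\tsxnn{\spp}$.

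For the second equality $\tnnx{X_1,X_2,0}\eqd\tsxnn{\spp}$, I would apply \refT{TkonstB} directly. With $X_j$ as specified and $\taux=0$, the distribution function $F_{X_j}$ is piecewise linear with $F_{X_j}(0)=1-p_j$, so
\begin{equation*}
F_{X_j}\qw(u)=\begin{cases} u/(1-p_j)-1, & 0<u\le 1-p_j,\\ (u-(1-p_j))/p_j, & 1-p_j<u\le 1.\end{cases}
\end{equation*}
Substituting into \eqref{jeppeB} and performing a case analysis on whether $x\lessgtr 1-p_1$ and $y\lessgtr 1-p_2$, one checks that the defining inequality $F_{X_1}\qw(x)+F_{X_2}\qw(y)>0$ is equivalent (up to boundary) to $(x,y)$ lying in the quadrilateral $\spp$ with vertices $(0,1),(1-p_1,1-p_2),(1,0),(1,1)$. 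For instance, in the region $x\le 1-p_1$, $y>1-p_2$ the inequality reduces to $xp_2/(1-p_1)+y>1$, which is precisely the half-plane bounded by the edge from $(0,1)$ to $(1-p_1,1-p_2)$; the other three cases are analogous. Since the two sets differ only on a null set, they define the same random bipartite graph, and \refT{TkonstB} gives $\tnnx{X_1,X_2,0}\eqd\tsxnn{\spp}$.

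Combining the two parts yields the corollary. The main obstacle is really just the arithmetic of the case analysis identifying $S$ with $\spp$; it is entirely elementary but requires a little care at the transition $u=1-p_j$ and in the degenerate cases $p_j\in\{0,1\}$, where $X_j$ becomes supported on a single interval and $\spp$ degenerates to a triangle or to $[0,1]^2$. Everything else is a transparent application of the general machinery already established in \refT{TrandB}, \refT{TtnpB}, and \refT{TkonstB}.
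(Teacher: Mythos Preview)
Your proposal is correct and follows exactly the route the paper intends: the paper omits the proof of \refC{C=B}, declaring it a ``simple modification'' of the non-bipartite arguments for \refC{C=} and \refC{C=2}, and you have carried out precisely those modifications, invoking \refT{TrandB} together with \refT{TtnpB} for the first equality and the computation behind \refT{TkonstB} for the second.
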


Note that if $p_1+p_2=1$, then $\spp$ is the upper triangle
$S_{1/2}\=\set{(x,y):x+y\ge1}$. Hence the distribution of $\tnnpp$
does not depend on $p_1$ as long as $p_2=1-p_1$. In particular, we may
then choose $p_1=1$ and $p_2=0$.
In this case, Definition \ref{DRBTG2} simplifies as follows.

\begin{eqenumerate}
\item\label{DRBTG2u}
Let $\tnn$ be the random bipartite threshold graph with $n_1+n_2$
vertices 
obtained as follows: Take $n_1$ `white' vertices and $n_2$ `black'
vertices, and arrange them in random order.  Join every white
vertex to every  earlier black vertex.
  \end{eqenumerate}

If $p_1=1$ and $p_2=0$, then further $X_1\eqd U\sim U(0,1)$ and
$X_2\eqd U-1$ in \refC{C=B}. Hence, we have found a number of natural
constructions that yield the same random \btg.
\begin{corollary}\label{CUB}
  If $p_1\in\oi$ and $n_1,n_2\ge1$, then 
  \begin{equation*}
	\tnnx{p_1,1-p_1}\eqd \tnnx{1,0}=\tnn\eqd\tsxnn{S_{1/2}}\eqd\tnnx{U,U,1},
  \end{equation*}
with $U\sim U(0,1)$.
\nopf
\end{corollary}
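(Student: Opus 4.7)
The plan is to unwind the four distributional equalities by specializing the already proved \refC{C=B} and the description in \ref{DRBTG2u}, then matching everything to the single set $S_{1/2}$.

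First I would show $\tnnx{p_1,1-p_1}\eqd\tnnx{1,0}$ by appealing to \refC{C=B}: it gives $\tnnx{p_1,p_2}\eqd\tsxnn{\spp}$ for every $(p_1,p_2)$, so it suffices to check that $\spp=S_{1/2}$ whenever $p_1+p_2=1$. This is a direct computation from the description of $\spp$ as the quadrilateral with vertices $(0,1)$, $(1-p_1,1-p_2)$, $(1,0)$, $(1,1)$: the condition $p_1+p_2=1$ forces $(1-p_1,1-p_2)=(p_2,p_1)$ to lie on the line $x+y=1$, so the quadrilateral collapses to the triangle $S_{1/2}=\{(x,y):x+y\ge 1\}$ (and more directly $\mu_{p_1,1-p_1}$ is then the uniform distribution on $\oi$, which is $\mu_{1/2}$). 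In particular the choice $p_1=1$, $p_2=0$ gives the same limit, and the degenerate point masses in $\mupp$ at $0$ or $1$ are absorbed as edges of the triangle.

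Next I would justify the equality $\tnnx{1,0}=\tnn$, which is really a specialization of \ref{DRBTG2} to $p_1=1$, $p_2=0$. With these values every white vertex is joined (with probability one) to all earlier black vertices, and every black vertex is joined (with probability zero) to no earlier white vertex; this is exactly the description in \ref{DRBTG2u}. So $\tnnx{1,0}$ and $\tnn$ coincide as random bipartite graphs, not merely in distribution.

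For the third link, \refC{C=B} with $(p_1,p_2)=(1,0)$ already gives $\tnnx{1,0}\eqd\tsxnn{S_{1,0}}$, and $S_{1,0}=S_{1/2}$ by the computation above. Finally, for $\tsxnn{S_{1/2}}\eqd\tnnx{U,U,1}$ I would apply \refT{TkonstB} with $X,Y\sim U(0,1)$ and $\taux=1$: since $F_X\qw$ and $F_Y\qw$ are both the identity on $\oi$, the set $S$ in \eqref{jeppeB} equals $\{(x,y)\in(0,1]^2:x+y>1\}$, which agrees with $S_{1/2}$ up to a Lebesgue null set, so $\tnnx{U,U,1}\eqd\tsxnn{S_{1/2}}$. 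Alternatively one obtains this directly from \refC{C=B}: with $(p_1,p_2)=(1,0)$ the variables $X_1,X_2$ are uniform on $(0,1)$ and $(-1,0)$ respectively, and the translation $X_2\mapsto X_2+1$ together with threshold $\taux=0\mapsto 1$ turns $\tnnx{X_1,X_2,0}$ into $\tnnx{U,U,1}$.

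The one genuine check (and the only place that needs any care) is the degenerate boundary behavior of the quadrilateral $\spp$ when $p_1$ or $p_2$ is $0$ or $1$, together with the attendant point masses in $\mupp$; I would handle this by working with the closed set $S_{\mupp}$ from \eqref{smu1} throughout, since that representation is continuous in $(p_1,p_2)$ and agrees with the triangle $S_{1/2}$ at $(1,0)$ and $(p_1,1-p_1)$ alike. Everything else is a mechanical translation between the parametrizations already established.
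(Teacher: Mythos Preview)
Your proposal is correct and follows exactly the line the paper takes: the text immediately preceding \refC{CUB} already records that $S_{p_1,1-p_1}=S_{1/2}$ (the vertex $(1-p_1,1-p_2)$ lands on $x+y=1$), invokes \refC{C=B} to identify all the $\tnnx{p_1,1-p_1}$ with $\tsxnn{S_{1/2}}$, specializes \ref{DRBTG2} at $(p_1,p_2)=(1,0)$ to get the description \ref{DRBTG2u} of $\tnn$, and uses the translation $X_2\mapsto X_2+1$ to pass from $\tnnx{X_1,X_2,0}$ to $\tnnx{U,U,1}$. Your only addition is the extra care about the degenerate corners; in fact when $p_1+p_2=1$ the measure $\mu_{p_1,1-p_1}$ is exactly uniform (the would-be point masses have weight $1-p_1$ or $p_1$ precisely when they vanish), so no special handling is needed.
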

We will see in the next subsection that this random \btg{} is
uniformly distributed as an unlabeled \btg.

\subsection{Uniform random \btg{s}}\label{SSuniformlimB}

It is easy to see that for every \btg, if we color the vertices in
$V_1$ white and the vertices in $V_2$ black, then there is an ordering
of the vertices such that a white vertex is joined to every earlier
black vertex but not to any later. (For example, if there are weights
as in \ref{B1.1}, order the vertices according to $w_i'$ and $w_j''$,
taking the white vertices first in case of a tie.) This yields a 1--1
correspondence between unlabeled \btg{s} on $n_1+n_2$ vertices and
sequences of $n_1$ white and $n_2$ black balls.
Consequently, the number of  unlabeled \btg{s} is
\begin{equation*}
  |\ctnn|=\binom{n_1+n_2}{n_1},
\qquad n_1,n_2\ge1.
\end{equation*}
Moreover, it follows that 
$\tnn$ is  uniformly distributed
in $\ctnn$; hence \refC{CUB} yields the following:

\begin{theorem}\label{TUB}
The random \btg{s} $\tnn$,
$\tnnx{p_1,1-p_1}$ ($0\le p_1\le 1$),  $\tsxnn{S_{1/2}}$, $\tnnx{U,U,1}$
are all
uniformly distributed, 
regarded as unlabeled \btg{s}.
\nopf
\end{theorem}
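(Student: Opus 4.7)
The plan is to reduce everything to the single claim that $\tnn$ (as defined in \ref{DRBTG2u}) induces the uniform distribution on the set $\ctnn$ of unlabeled \btg{s} on $n_1+n_2$ vertices. Once that is established, \refC{CUB} gives the theorem in full: it already identifies $\tnnx{p_1,1-p_1}$, $\tsxnn{S_{1/2}}$, and $\tnnx{U,U,1}$ with $\tnn$ in distribution, so all four constructions have the same unlabeled law.

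To prove the key claim, I would invoke the bijection sketched in the paragraph just before the theorem: ordering the vertices of any unlabeled \btg{} by weights (white vertices first in case of a tie) yields a unique sequence of $n_1$ whites and $n_2$ blacks, and conversely every such color sequence determines a unique unlabeled \btg{} via the rule ``join each white to every earlier black.'' This gives the count $|\ctnn|=\binom{n_1+n_2}{n_1}$ and, crucially, identifies $\ctnn$ (as an unlabeled object) with the set of color sequences of type $(n_1,n_2)$.

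Now $\tnn$ is constructed by taking $n_1$ labeled white and $n_2$ labeled black vertices, choosing a uniformly random total order (equivalently, a uniformly random permutation of $[n_1+n_2]$ into $V_1\cup V_2$), and then adding the ``white joins all earlier black'' edges. Forgetting the vertex labels, the random total order projects onto its sequence of colors, and each of the $\binom{n_1+n_2}{n_1}$ possible color sequences arises from exactly $n_1!\,n_2!$ of the $(n_1+n_2)!$ total orders; hence every color sequence is obtained with probability $1/\binom{n_1+n_2}{n_1}$. Under the bijection above this is precisely the uniform distribution on $\ctnn$.

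There is no real obstacle here; the one thing worth double-checking is that the bijection is genuinely a bijection, i.e., that the ordering prescribed by \ref{B1.1} (with the tie-breaking rule) is recovered from the color sequence. That is routine: given a color sequence, the graph produced by ``white joins all earlier black'' is a \btg{} (exhibit weights as in \ref{B1.1} by giving the $k$-th vertex in the sequence weight $k$ on its own side), and the ``weights, then color'' ordering of the resulting graph returns the original sequence, so the two maps are mutual inverses on unlabeled objects.
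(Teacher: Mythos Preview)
Your proposal is correct and follows exactly the paper's own argument: the paper establishes the bijection between unlabeled \btg{s} and color sequences in the paragraph preceding the theorem, observes that $\tnn$ is therefore uniform on $\ctnn$ (since a uniformly random ordering of $n_1$ white and $n_2$ black vertices projects to a uniformly random color sequence), and then invokes \refC{CUB} for the remaining constructions. You have simply spelled out the counting step ($n_1!\,n_2!$ orderings per color sequence) and the inverse-bijection check that the paper leaves implicit.
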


We have not studied uniform random labeled \btg{s}.

\section{Spectrum of Threshold Graphs}\label{Sspectrum}

There is a healthy literature on the eigenvalue distribution of the
adjacency matrix for various classes of random graphs. Much of this is
focused on the spectral gap (e.g., most $k$-regular graphs are
Ramanujan \cite{Davidoff}). See Jakobson, Miller, Rivin,
Rudnick \cite{Jakobson} for evidence showing that random $k$-regular
graphs have the same limiting eigenvalue distribution as the Gaussian
orthogonal ensemble. The following results show that random threshold
graphs give a family of examples with highly controlled limiting
spectrum.

There is a tight connection between the degree distribution of
a threshold graph and 
the spectrum of its Laplacian,
see \cite{Merris94,HK96,MR05}.
Recall that the \emph{Laplacian} of a graph $G$, with $V(G)=[n]$, say,
is the $n\times n$ matrix $\cL=D-A$, where $A$ is the adjacency matrix
of $G$ and $D$ is the diagonal matrix with entries
$d_{ii}=d_G(i)$. (Thus $\cL$ is symmetric and has row sums 0.)
It is easily seen that $\langle\cL x,y\rangle=\sum_{ij\in E(G)}
(x_i-x_j)(y_i-y_j)$ for $x,y\in\bbR^n$.
The eigenvalues $\gl_i$ of $\cL$ satisfy $0\le\gl_i\le n$,
$i=1,\dots,n$, and we define the normalized spectral distribution
$\nul\in\cP$ as the empirical distribution of $\set{\gl_i/n}_{i=1}^n$.

For a \tg, it is easily seen that if we order the vertices as in
\ref{D1.2} and \refSS{SSunlabeled}, then for each $i=2,\dots,n$ the
function
\begin{equation*}
  \gf_i(j)\=
  \begin{cases}
	-1, & j<i,\\
i-1, & j=i,\\
0, & j>i.
  \end{cases}
\end{equation*}
is an eigenfunction of $\cL$ with eigenvalue $d(i)$ or $d(i)+1$,
depending on whether $i$ is added as isolated or dominating, \ie,
whether $\ga_i=0$ or 1 in the binary code of the graph.
Together with $\gf_1\=1$  (which is an eigenfunction with eigenvalue 0
for any graph), these form an orthogonal basis of eigenfunctions. The
Laplacian spectrum thus can be written
\begin{equation}
  \label{lsp}
\set{0}\cup\set{d(i)+\ga_i:i=2,\dots,n}.
\end{equation}
In particular, the eigenvalues are all integers.

Moreover, \eqref{lsp} shows that the spectrum $\set{\gl_i}_1^n$ is
closely related to the degree sequence; in particular, asymptotically
they are the same in the sense that if $G_n$ is a sequence of \tg{s}
with $v(G_n)\to\infty$ and $\mu\in\cP$, then 
\begin{equation}\label{nulnu}
  \nul(G_n)\to\mu \iff \nu(G_n)\to\mu.
\end{equation}
(See \cite{HK96} for a detailed comparison of the Laplacian spectrum and
the degree sequence for \tg{s}.) In particular, \refT{Tlim} can be
restated using the spectral distribution:
\begin{theorem}  \label{TlimL}
  Let $G_n$ be a sequence of \tg{s} such that
  $v(G_n)\to\infty$. 
Then $G_n$ converges in $\cuq$ as \ntoo, if and only if 
the spectral distributions $\nul(G_n)$ converge to some distribution
  $\mu$. In this case, $\mu\in\cps$ and $G_n\to\Gamma_\mu$.
\nopf
\end{theorem}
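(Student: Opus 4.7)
The plan is to obtain \refT{TlimL} as an immediate consequence of \refT{Tlim} together with the equivalence \eqref{nulnu}, so the substantive task is to justify \eqref{nulnu}. Since \refT{Tlim} already gives the characterization of convergence in $\cuq$ in terms of $\nu(G_n)$, once \eqref{nulnu} is established we get both directions and the identification of the limit $\Gamma_\mu$ for free.

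To prove \eqref{nulnu}, I would first invoke the explicit description of the Laplacian spectrum in \eqref{lsp}: for any \tg{} $G_n$ of order $n$, the $n$ eigenvalues of $\cL$ are $0$ together with the $n-1$ values $d(i)+\ga_i$ for $i=2,\dots,n$, where each $\ga_i\in\set{0,1}$. Divide by $n$ to pass to the normalized spectrum. The normalized degree distribution $\nu(G_n)$ is the empirical measure of $\set{d(i)/n}_{i=1}^n$, while $\nul(G_n)$ is the empirical measure of $\set{0}\cup\set{(d(i)+\ga_i)/n}_{i=2}^n$. Matching the $(n-1)$ non-zero eigenvalues to the corresponding degrees $d(i)/n$ shows that each is displaced by at most $1/n$, while the remaining eigenvalue $0$ and the unused degree $d(1)/n$ account for at most one extra discrepancy of mass $1/n$.

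From this pointwise comparison of atoms, I would conclude that the \Levy{} distance (as in \eqref{levy}) satisfies $d_L\bigpar{\nul(G_n),\nu(G_n)}\le 2/n$, which tends to $0$ as $v(G_n)\to\infty$. Since $d_L$ metrizes weak convergence on $\cP$, any subsequence along which one of $\nul(G_n)$ or $\nu(G_n)$ converges forces the other to converge to the same limit $\mu\in\cP$, establishing \eqref{nulnu}.

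Combining \eqref{nulnu} with \refT{Tlim}, we see that $G_n\to\gG$ in $\cuq$ is equivalent to $\nu(G_n)\to\mu$ for some $\mu\in\cP$, which in turn is equivalent to $\nul(G_n)\to\mu$; the resulting $\mu$ lies in $\cps$ and $\gG=\Gamma_\mu$. No genuine obstacle appears here; the only care needed is the one-off correction for the zero eigenvalue and the unmatched vertex $1$, which is absorbed into the $O(1/n)$ bound on $d_L$.
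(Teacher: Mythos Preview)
Your proposal is correct and follows exactly the paper's approach: the paper states \eqref{nulnu} as an immediate consequence of \eqref{lsp} and then deduces \refT{TlimL} from \refT{Tlim} without further argument (the \nopf{} marks the theorem as requiring no separate proof). Your explicit \Levy-distance bound is simply a more detailed justification of the step the paper leaves to the reader.
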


\begin{remark}
  It can be shown that the spectrum and the degree sequence
are
  asymptotically close in the sense that \eqref{nulnu} holds for any
  graphs $G_n$ with $v(G_n)\to\infty$, even though  in general there is
  no simple relation like \eqref{lsp}.
\end{remark}

Another relation between the spectrum and the degree sequence for a
\tg{} is that  their Ferrers diagrams are transposes of each other,
see \cite{Merris94, MR05}; this is easily verified from
\eqref{lsp} by induction.
If we scale the Ferrers diagrams by $n$, so that they fit in the unit
square $\oi^2$ with a corner at $(0,1)$, then the lower boundary is the
graph of the empirical distribution function of the corresponding
normalized values, \ie, the distribution function of $\nu(G)$ or
$\nul(G)$.
Hence, these  distribution functions are related by reflection in  the
diagonal between $(0,1)$ and $(1,0)$, so by \eqref{musss} (and the
comment after it), for any \tg{} $G$, 
\begin{equation*}
  \nul(G)=\nu(G)\sss.
\end{equation*}

\begin{acks}
Large parts of this research was done during visits of SJ to 
Universit\'e de Nice  and  of PD and SH to Uppsala
University in January and March 2007, partly funded by the ANR Chaire
d'excellence to  PD. Work was continued during a visit of SJ to the Institut
Mittag-Leffler, Djursholm, Sweden, 2009.
SH was supported by grants NSF DMS-02-41246 and NIGMS R01GM086884-2.
We thank Adam Guetz and Sukhada Fadnavis for careful reading of a preliminary draft.
\end{acks}

\bibliographystyle{abbrv}
\bibliography{threshold}

\end{document}